\documentclass{amsart}


\usepackage{amscd}
\usepackage{amsmath}
\usepackage{amssymb}
\usepackage{amsthm}
\usepackage{epsf}
\usepackage{latexsym}
\usepackage{verbatim}
\usepackage[all, cmtip]{xy}
\usepackage{tikz}
\usetikzlibrary{positioning}
\usetikzlibrary{matrix}

\tikzstyle{bsq}=[rectangle, draw, thick, minimum width=1cm, minimum height=1cm]
\tikzstyle{bver}=[rectangle, draw, thick, minimum width=1cm, minimum height=2cm]
\tikzstyle{bhor}=[rectangle, draw, thick, minimum width=2cm, minimum height=1cm]

\setlength{\textheight}{8.7in} \setlength{\topmargin}{-0.2in}
\setlength{\headheight}{0.0in} \setlength{\headsep}{0.3in}
\setlength{\leftmargin}{1.5in}

\newtheorem{theorem}{Theorem}[section]
\newtheorem{definition}[theorem]{Definition}
\newtheorem{lemma}[theorem]{Lemma}
\newtheorem{conjecture}[theorem]{Conjecture}

\newtheorem{proposition}[theorem]{Proposition}

\newtheorem{varexample}[theorem]{Example}

\newtheorem*{MRC}{Maximal Rank Conjecture}

\newtheorem*{ShapeLemma}{Shape Lemma for Minima}
\theoremstyle{definition}
\newtheorem{remark}[theorem]{Remark}
\newtheorem{notation}[theorem]{Notation}

\newcommand{\Spec}{\mathrm{Spec}\,}

\newcommand{\PP}{\mathbb{P}}

\newcommand{\ZZ}{\mathbb{Z}}

\newcommand{\rr}{\left \lceil \frac{r}{2} \right \rceil}

\newcommand{\cA}{\mathcal{A}}

\newcommand{\cG}{\mathcal{G}}
\newcommand{\cL}{\mathcal{L}}

\newcommand{\ord}{\operatorname{ord}}

\newcommand{\trop}{\operatorname{trop}}
\newcommand{\ddiv}{\operatorname{div}}

\newcommand{\val}{\operatorname{val}}

\newcommand{\Sym}{\operatorname{Sym}}
\newcommand{\outdeg}{\mathrm{outdeg}}

\newcommand{\an}{\mathrm{an}}

\newenvironment{example}{\begin{varexample}
\begin{normalfont}}{\end{normalfont}
\end{varexample}}
\begin{document}
\title{Tropical Independence II:  The Maximal Rank Conjecture for Quadrics}
\author{David Jensen}
\author{Sam Payne}
\date{}
\bibliographystyle{alpha}

\maketitle

\begin{abstract}
Building on our earlier results on tropical independence and shapes of divisors in tropical linear series, we give a tropical proof of the maximal rank conjecture for quadrics.  We also prove a tropical analogue of Max Noether's theorem on quadrics containing a canonically embedded curve, and state a combinatorial conjecture about tropical independence on chains of loops that implies the maximal rank conjecture for algebraic curves.
\end{abstract}

\section{Introduction}

Let $X \subset \PP^r$ be a smooth curve of genus $g$, and recall that a linear map between finite dimensional vector spaces has \emph{maximal rank} if it is either injective or surjective.  The kernel of the restriction map
\[
\rho_m: H^0(\PP^r, \mathcal{O}(m)) \rightarrow H^0(X, \mathcal{O}(m)|_X)
\]
is the space of homogeneous polynomials of degree $m$ that vanish on $X$.  The conjecture that $\rho_m$ should have maximal rank for sufficiently general embeddings of sufficiently general curves, attributed to Noether in \cite[p.\,4]{ArbarelloCiliberto83},\footnote{Noether considered the case of space curves in \cite[\S8]{Noether82}.  See also \cite[pp.\,172--173]{CES25} for hints toward Noether's understanding of the general problem.} was studied classically by Severi \cite[\S10]{Severi15}, and popularized by Harris \cite[p.\,79]{Harris82}.

\begin{MRC}
Let $V \subset \cL(D_X)$ be a general linear series of rank $r \geq 3$ and degree $d$ on a general curve $X$ of genus $g$.  Then the multiplication maps
\[
\mu_m: \Sym^m V \rightarrow \cL(mD_X)
\]
have maximal rank for all $m$.
\end{MRC}

\noindent Our main result gives a combinatorial condition on the skeleton of a curve over a nonarchimedean field to ensure the existence of a linear series such that $\mu_2$ has maximal rank.  In particular, whenever the general curve of genus $g$ admits a nondegenerate embedding of degree $d$ in $\PP^r$ then the image of a general embedding is contained in the expected number of independent quadrics.
\medskip

Let $\Gamma$ be a chain of loops connected by bridges with admissible edge lengths, as defined in \S\ref{Section:TheGraph}.  See Figure~\ref{Fig:TheGraph} for a schematic illustration, and note that our conditions on the edge lengths are more restrictive than those in \cite{tropicalBN, tropicalGP}.

\begin{theorem}
\label{thm:mainthm}
Let $X$ be a smooth projective curve of genus $g$ over a nonarchimedean field such that the minimal skeleton of the Berkovich analytic space $X^{\an}$ is isometric to $\Gamma$.  Suppose $r \geq 3$, $\rho(g,r,d) \geq 0$, and $d < g + r$.  Then there is a very ample complete linear series $\cL(D_X)$ of degree $d$ and rank $r$ on $X$ such that the multiplication map
\[
\mu_2: \Sym^2 \cL(D_X) \rightarrow \cL(2D_X)
\]
has maximal rank.
\end{theorem}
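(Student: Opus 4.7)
The plan is to combine upper semicontinuity of the rank of $\mu_2$ with an explicit tropical construction on $\Gamma$. By semicontinuity on the relative Picard scheme, it suffices to exhibit, for the particular curve $X$ with skeleton $\Gamma$, a single very ample complete linear system $\cL(D_X)$ of degree $d$ and rank $r$ for which $\mu_2$ attains its maximal possible rank; existence of such a $D_X$ whenever $\rho(g,r,d)\geq 0$ and $d<g+r$ follows from the tropical Brill--Noether and Gieseker--Petri work of the authors, and provides a specialization divisor on $\Gamma$. By the central lemma of Tropical Independence~I, if sections $s_1,\ldots,s_N\in \cL(2D_X)$ have tropicalizations $\psi_1,\ldots,\psi_N$ that are tropically independent as piecewise linear functions on $\Gamma$, then the $s_k$ are linearly independent in $\cL(2D_X)$. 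Taking $s_k=f_if_j$ with $f_i\in \cL(D_X)$, so that $\trop(f_if_j)=\trop(f_i)+\trop(f_j)$, converts the maximal rank question into the purely combinatorial problem of producing $N=\min\!\bigl(\binom{r+2}{2},\,h^0(2D_X)\bigr)$ tropically independent pairwise sums of fixed tropicalizations.

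Next I would fix a distinguished basis $f_0,\ldots,f_r$ of $\cL(D_X)$ whose tropicalizations $\psi_i=\trop(f_i)$ are pinned down by the Shape Lemma for Minima: each $\psi_i$ is a specific piecewise linear function on $\Gamma$ whose associated effective divisor $D+\ddiv(\psi_i)$ is supported on a prescribed set of vertices of the loops of $\Gamma$. The pairwise sums $\psi_i+\psi_j$ are again piecewise linear and are parametrized combinatorially by pairs of shapes. The proof then splits into two cases according to the sign of $\binom{r+2}{2}-h^0(2D_X)$: in the injective regime one must display all $\binom{r+2}{2}$ products as tropically independent on $\Gamma$; in the surjective regime one must single out $h^0(2D_X)$ products whose tropicalizations form a maximal tropically independent set inside the tropical linear series of $2D$. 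The hypothesis $d<g+r$ keeps $D_X$ sufficiently special that the tropical analogue of Max Noether's theorem announced in the abstract governs the surjective regime.

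The main obstacle is the combinatorial verification of tropical independence: for every choice of constants $c_{ij}$, one must exhibit a point of $\Gamma$ at which the minimum of $\psi_i+\psi_j+c_{ij}$ is attained by a single index pair $(i,j)$. I expect this to be executed by a loop-by-loop induction along the chain, using admissibility of the edge lengths to exclude accidental slope coincidences between different sums $\psi_i+\psi_j$, and in the surjective case reinforced by the tropical Max Noether theorem to enumerate exactly which shapes in the tropical linear series of $2D$ arise as sums of shapes in the tropical linear series of $D$. Once the required independence is in place, the passage back to the algebraic statement for $\mu_2$ on $X$ is an immediate application of the independence lemma cited above.
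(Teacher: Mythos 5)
Your proposal captures the top-level structure accurately: reduce to a single Brill--Noether--Petri general curve by semicontinuity, lift a rank-$r$ divisor $D$ from $\Gamma$ to $X$, lift the distinguished piecewise linear functions $\psi_0,\ldots,\psi_r$ to rational functions $f_0,\ldots,f_r\in\cL(D_X)$, and then show that enough of the sums $\psi_i+\psi_j$ are tropically independent, via a left-to-right induction along the chain that exploits admissibility of the edge lengths.

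There are, however, two substantive misconceptions. First, the tropical Max Noether theorem plays \emph{no} role in the proof of this theorem. That result concerns trivalent, 3-edge-connected graphs, and the paper explicitly flags the Max Noether section as logically independent of the proof of the main theorem; in particular the chain of loops $\Gamma$ is not trivalent, and the surjective regime is not handled by invoking any version of Noether's theorem. Instead, both the injective case ($r\leq 2s$, where $s=g-d+r$) and the surjective case ($r>2s$) are proved by the same inductive machinery, with the only extra work in the surjective case being the choice of a specific subset $\cA$ of pairs $(i,j)$ of the right cardinality, defined by deleting two half-open triangles and a closed chevron from the full triangle of index pairs. Second, the Shape Lemma for Minima does not pin down the $\psi_i$; those functions are determined a priori by the lingering lattice path attached to the chosen vertex-avoiding divisor $D$ (in fact $D$ is taken to be the divisor attached to a specific standard tableau, so the shapes of the $\psi_i$ on each loop are entirely explicit). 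The Shape Lemma is instead applied to the pointwise minimum $\theta=\min_{I}\psi_I$ of a putative dependence, to control where the divisor $\Delta=2D+\ddiv\theta$ is supported.

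Relatedly, the phrase ``exclude accidental slope coincidences'' undersells the crucial mechanism. The actual argument introduces the excess degree function $e(k)=\delta_0+\cdots+\delta_k-2k$ and, on each block $\Gamma_{[a(\ell),b(\ell)]}$ of $s$ consecutive loops, bounds the number of $\delta$-permissible sums $\psi_{ij}$; if too few are permissible to realize the minimum twice everywhere, a graph-theoretic loop among the equations relating values at base points $q_k$ of the loops produces a nontrivial $\ZZ$-linear relation $\sum c_k m_k=0$ among the bottom edge lengths with $|c_k|\leq g+1$, contradicting admissibility (Definition 4.1). It is this numerical obstruction, not slope coincidences per se, that drives the induction to the bound $\deg\Delta>2d$, yielding the contradiction. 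Without supplying this mechanism (or an equivalent), the combinatorial core of your outline is a genuine gap.

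Finally, the hypothesis $d<g+r$ is there simply to ensure $D_X$ is special, so that the conjecture is not already covered by the nonspecial case; it does not enter in the way you suggest.
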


\noindent Such curves do exist, over fields of arbitrary characteristic, and the condition that $X^\an$ has skeleton $\Gamma$ ensures that $X$ is Brill--Noether--Petri general \cite{tropicalGP}.  As explained in \S\ref{sec:prelim}, to prove the maximal rank conjecture for fixed $g$, $r$, $d$, and $m$ over an algebraically closed field of given characteristic, it is enough to produce a single linear series $V \subset \cL(D_X)$ on a single Brill--Noether--Petri general curve over a field of the same characteristic for which $\mu_m$ has maximal rank.  In particular, the maximal rank conjecture for $m = 2$, and arbitrary $g$, $r$, and $d$, follows from Theorem~\ref{thm:mainthm}.

Surjectivity of $\mu_m$ for small values of $m$ can often be used to prove surjectivity for larger values of $m$.  See, for instance, \cite[pp. 140--141]{ACGH}.  In characteristic zero, when Theorem~\ref{thm:mainthm} gives surjectivity of $\mu_2$, we apply standard arguments from linear series on curves to deduce surjectivity of $\mu_m$ for all $m$.

\begin{theorem}
\label{thm:allm}
Let $X$ and $D_X$ be as in Theorem~\ref{thm:mainthm}, and suppose $\mu_2$ is surjective. Then $\mu_m$ is surjective for all $m \geq 2$.
\end{theorem}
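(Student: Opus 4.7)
I plan to argue by induction on $m \geq 2$, treating $m = 2$ as the base case (hypothesis). For the inductive step, I would set $V = \cL(D_X)$, assume $\mu_k$ is surjective for all $2 \leq k < m$, and use the commutative square
\[
\xymatrix{
V \otimes \Sym^{m-1} V \ar@{->>}[r] \ar[d]_-{\mathrm{id}\,\otimes\,\mu_{m-1}} & \Sym^m V \ar[d]^-{\mu_m} \\
V \otimes H^0((m-1) D_X) \ar[r]^-{\beta_m} & H^0(m D_X)
}
\]
to reduce to the surjectivity of the multiplication map $\beta_m$: the top horizontal map is the canonical surjection onto $\Sym^m V$, and the left vertical is surjective by the inductive hypothesis, so $\mu_m$ is surjective if and only if $\beta_m$ is.

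My main tool for proving surjectivity of $\beta_m$ is the base-point-free pencil trick. Since $D_X$ is very ample by Theorem~\ref{thm:mainthm}, a general two-dimensional subspace $W \subset V$ is base-point-free, and the pencil trick produces the exact sequence
\[
0 \to H^0((m-2) D_X) \to W \otimes H^0((m-1) D_X) \to H^0(m D_X) \to H^1((m-2) D_X),
\]
so the cokernel of $\beta_m$ embeds into $H^1((m-2) D_X)$. Whenever $(m-2)d > 2g - 2$, this $H^1$ vanishes and surjectivity is immediate; iterating the induction handles every $m$ in this range.

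The main obstacle will be the range where $(m-2) D_X$ is itself special, which can occur for small $m$ because the hypothesis $d < g + r$ forces $D_X$ to be special. The critical case is $m = 3$, where the cokernel of $\beta_3$ a priori embeds into $H^1(D_X) \neq 0$. To handle it I would refine the pencil-trick calculation, either exploiting a base-point-free pencil inside the very ample linear series $|2 D_X|$ together with the hypothesis that $\mu_2$ is surjective, or passing to the kernel bundle $M_V$ defined by $0 \to M_V \to V \otimes \cO_X \to \cO_X(D_X) \to 0$ and establishing the vanishing $H^1(M_V \otimes \cO_X((m-1) D_X)) = 0$ via a slope bound using semistability of $M_V$ on the Brill--Noether--Petri general curve $X$. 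Both strategies are standard in the theory of linear series on curves; see \cite[pp. 140--141]{ACGH}.
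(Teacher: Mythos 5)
Your reduction of $\mu_m$ to the map $\beta_m\colon V\otimes H^0((m-1)D_X)\to H^0(mD_X)$ via the commutative square is sound, and your pencil-trick argument for $m\geq 4$ is essentially the paper's inductive step (with one small correction: rather than the degree bound $(m-2)d>2g-2$, you should invoke the fact that $kD_X$ is nonspecial for $k\geq 2$, which holds by the Petri condition as in Remark~\ref{Rem:Nonspecial}; the degree bound alone is not automatic from $d<g+r$, though it does hold in the specific parameter range that survives the other hypotheses). The genuine gap is the $m=3$ step, which you correctly identify as the crux but do not actually close. The two strategies you sketch do not obviously work. The kernel-bundle route requires $H^1(M_V(2D_X))=0$, and by Serre duality and the slope bound for a semistable bundle this needs $d(2-1/r)<2g-2$; in the $\rho=0$ case with $s=g-d+r$ one computes this is equivalent to $3s>2r+1$, which fails for small $s$ (e.g., $s=1$, any $r\geq 2$). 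The alternative using a pencil in $|2D_X|$ gives an injection $\cL(D_X)^{\oplus 2}\hookrightarrow\cL(3D_X)$ via the pencil trick, but a dimension count shows this is not enough to conclude surjectivity in general. The paper instead proves surjectivity of $\mu_3$ by a Castelnuovo-type estimate: it first uses the identity $\binom{r+2}{2}-(2d-g+1)=\binom{d-g}{2}-\binom{g-d+r}{2}-\rho$ to conclude $d\geq g$, then cites \cite[Exercise~B-6, p.~138]{ACGH} (which rests on the uniform position lemma, valid for $r\geq 4$ in all characteristics by Rathmann and for $r\geq 3$ in characteristic zero) to get the dimension bound $\min\{4d-2g,\,3d-g\}=3d-g$, and finally handles $r=3$ separately by pinning down the two possible $(g,d)$ and invoking Green--Lazarsfeld's projective normality criterion. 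Your proposal contains no replacement for this step, so as written it does not prove the theorem.
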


\noindent  This confirms the maximal rank conjecture for all $m$ in the range where $\mu_2$ is surjective.  In particular, a general nondegenerate embedding of a general curve is projectively normal if and only if ${r + 2 \choose 2} \geq 2d -g +1$.

\begin{remark}
The maximal rank conjecture is known, for all $m$, when $r=3$ \cite{BallicoEllia87b}, and in the non-special case $d \geq r + g$ \cite{BallicoEllia87}.  There is a rich history of partial results on the maximal rank conjecture for $m = 2$, including some with significant applications.  Voisin proved the case of adjoint bundles of gonality pencils and deduced the surjectivity of the Wahl map for generic curves \cite[\S4]{Voisin92}.  Teixidor proved that $\mu_2$ is injective for all linear series on the general curve when $d < g + 2$ \cite{Teixidor03}, over fields of characteristic not equal to two. Farkas proved the case where $\rho(g,r,d)$ is zero and $\dim \Sym^2 \cL(D_X) = \dim \cL(2D_X)$, and used this to deduce an inifinite sequence of counterexamples to the slope conjecture \cite[Theorem~1.5]{Farkas09}.    Another special case is Noether's theorem on canonically embedded curves, discussed below.  Furthermore, Larson has proved an analogue of the maximal rank conjecture for hyperplane sections of curves \cite{Larson12}.  This is only a small sampling of prior work on the maximal rank conjecture.  We note, in particular, that the literature contains many short articles by Ballico based on classical degeneration methods, and the maximal rank conjecture for quadrics in characteristic zero appears as Theorem~1 in \cite{Ballico12}.
\end{remark}

Two key tools in the proof of Theorem~\ref{thm:mainthm} are the lifting theorem from \cite{LiftingDivisors} and the notion of \emph{tropical independence} developed in \cite{tropicalGP}.  The lifting theorem allows us to realize any divisor $D$ of rank $r$ on $\Gamma$ as the tropicalization of a divisor $D_X$ of rank $r$ on $X$.  Our understanding of tropical linear series on $\Gamma$, together with the nonarchimedean Poincar\'{e}-Lelong formula, produces rational functions $\{f_0, \ldots, f_r \}$ in the linear series $\cL(D_X)$ whose tropicalizations $\{ \psi_0, \ldots, \psi_r \}$ are a specific well-understood collection of piecewise linear functions on $\Gamma$.  We then show that a large subset of the piecewise linear functions $\{ \psi_i + \psi_j \}_{0 \leq i \leq j \leq r}$ is tropically independent.  Since $\psi_i + \psi_j$ is the tropicalization of $f_i \cdot f_j$, the size of this subset is a lower bound for the rank of $\mu_2$, and this is the bound we use to prove Theorem~\ref{thm:mainthm}.

There is no obvious obstruction to proving the maximal rank conjecture in full generality using this approach, although the combinatorics become more challenging as the parameters increase.  We state a precise combinatorial conjecture in \S\ref{Section:TheGraph}, which, for any given $g$, $r$, $d$, and $m$, implies the maximal rank conjecture for the same $g$, $r$, $d$, and $m$.  We prove this conjecture not only for $m = 2$, but also for $md < 2g + 4$.  (See Theorem~\ref{Thm:LowDegree}.)
\medskip

We also present advances in understanding multiplication maps by tropical methods on skeletons other than a chain of loops.  Recall that Noether's theorem on canonically embedded curves says that $\mu_2 : \Sym^2 \cL(K_X) \rightarrow \cL(2K_X)$ is surjective whenever $X$ is not hyperelliptic.  This may be viewed as a strong form of the maximal rank conjecture for quadrics in the case where $r = g-1$ and $d = 2g-2$.

On the purely tropical side, we prove an analogue of Noether's theorem for trivalent, 3-edge-connected graphs.

\begin{theorem}
\label{Thm:TropicalMaxNoether}
Let $\Gamma$ be a trivalent, 3-edge-connected metric graph. Then there is a tropically independent set of $3g- 3$ functions in $2R(K_\Gamma)$.
\end{theorem}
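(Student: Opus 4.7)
My plan is to exhibit one function $\varphi_e \in 2R(K_\Gamma)$ for each edge $e$ of $\Gamma$, and show that the $3g-3$ resulting functions are tropically independent. The indexing is natural: a trivalent graph of genus $g$ has exactly $3g-3$ edges and $2g-2$ vertices, and the canonical divisor $K_\Gamma$ has degree $2g-2$ and rank $g-1$.

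For the construction, fix an edge $e$ with midpoint $p_e$. When $g \geq 3$, the inequality $r(K_\Gamma - 2p_e) \geq r(K_\Gamma) - 2 = g-3 \geq 0$ produces an effective divisor $D_e \sim K_\Gamma$ containing $2 p_e$, and hence a function $\psi_e \in R(K_\Gamma)$, unique up to an additive constant, with $K_\Gamma + \ddiv(\psi_e) = D_e$. Set $\varphi_e := 2\psi_e \in 2R(K_\Gamma)$; near $p_e$, the function $\varphi_e$ has a strict V-shaped local minimum with outgoing slopes $+2$ on each side along $e$, and $\varphi_e$ is affine on any other edge whose interior avoids $\mathrm{supp}(D_e)$. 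The $g=2$ case is the theta graph, where the midpoints of the three edges are Weierstrass points, so $K_\Gamma \sim 2 p_e$ for each $e$, and the three resulting functions can be written down and checked independent directly.

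For tropical independence, I would argue by contradiction: suppose $\theta := \min_e(\varphi_e + c_e)$ is constant with some $c_e \neq -\infty$. The V-shape of $\varphi_e$ at $p_e$ and the affinity of each $\varphi_{e'}$ ($e'\neq e$) near $p_e$ --- provided each $D_{e'}$ is chosen with support disjoint from the interior of $e$ --- force $\varphi_e + c_e$ to be the unique function attaining $\theta$ at $p_e$. Normalizing $\theta \equiv 0$ then pins each $c_e$ to the value $-\varphi_e(p_e)$. The residual requirement that $\theta \equiv 0$ everywhere imposes continuity along each edge together with a slope-balance at each trivalent vertex (three outgoing slopes of the minimum-achieving functions summing to zero). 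These combined constraints should be inconsistent. Three-edge-connectedness enters in constructing the $D_e$, say as $p_e$-reduced representatives of $K_\Gamma$: the absence of 2-edge cuts is what allows $\mathrm{supp}(D_e) \setminus \{p_e\}$ to be placed away from the interiors of all other edges.

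The main obstacle is the tropical independence step. Making globally compatible choices of the divisors $D_e$, then extracting a contradiction from the system of constraints imposed by constancy of $\theta$, requires a careful simultaneous analysis at all $2g-2$ trivalent vertices combined with a global slope balance on $\Gamma$. I expect 3-edge-connectedness to enter essentially via a flow/cut argument: a hypothetical 2-edge cut would decouple the two sides of the cut and potentially allow a nontrivial tropical relation supported on one side, spoiling the desired count of $3g-3$ independent functions.
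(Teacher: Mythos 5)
Your construction differs from the paper's in a way that creates a genuine gap, and your independence argument rests on a misreading of tropical dependence. The paper builds, for each edge $e$, a function $\psi_e = \psi_1 + \psi_2 \in R(2K_\Gamma)$ where $\psi_1, \psi_2 \in R(K_\Gamma)$ are chosen (via Proposition~\ref{Prop:CanonicalSections}) so that the loci $\Gamma_{\psi_i}$ on which each attains its \emph{global} minimum are two loops $\Gamma_1, \Gamma_2 \subset \Gamma$ with $\Gamma_1 \cap \Gamma_2 = e$; producing these two loops is exactly where Menger's theorem and 3-edge-connectedness enter. The payoff is the precise global statement $\Gamma_{\psi_e} = e$. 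Your $\varphi_e = 2\psi_e$, with $K_\Gamma + \ddiv(\psi_e) = D_e \geq 2p_e$, controls only the local behavior of $\varphi_e$ at the midpoint $p_e$; the remaining $2g-4$ points of $D_e$ are unconstrained, so you have no control over where $\varphi_e$ attains its \emph{global} minimum. That global control is precisely what the paper's independence argument hinges on.

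The independence step is also incorrect as stated: tropical dependence does not mean $\theta = \min_e(\varphi_e + c_e)$ is constant; it means the minimum is attained by at least two of the $\varphi_e + c_e$ at every point. Your ``slope balance'' at trivalent vertices (outgoing slopes summing to zero) is the condition $\ddiv(\theta) = 0$, whereas the relevant fact is merely $\ddiv(\theta) + 2K_\Gamma \geq 0$. The paper instead uses a structural lemma (Lemma~\ref{Lemma:2Canonical}): for a trivalent $\Gamma$ and any $\theta \in R(2K_\Gamma)$, the global-minimum locus $\Gamma_\theta$ is a union of edges. Taking $v$ in the interior of an edge contained in $\Gamma_\theta$, two functions $\psi_{e_1} + b_{e_1}$ and $\psi_{e_2} + b_{e_2}$ achieve the value $\theta(v)$; since $\theta(v)$ is the global minimum of $\theta$ and $\theta \leq \psi_{e_i} + b_{e_i}$ pointwise, $v$ lies in $\Gamma_{\psi_{e_i}} = e_i$ for both $i$, forcing $e_1 = e_2$, a contradiction. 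Without $\Gamma_{\varphi_e} = e$, this argument is unavailable to you, and the ``flow/cut'' role you anticipate for 3-edge-connectedness does not materialize; its actual role is to manufacture the two loops meeting in $e$. You yourself flag that the closing step is missing --- that flag is accurate, and the missing ingredient is the global-minimum control plus Lemma~\ref{Lemma:2Canonical}.
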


\noindent Furthermore, we prove the appropriate lifting statements to leverage this tropical result into a maximal rank statement for canonical embeddings of curves with trivalent and 3-connected skeletons.

\begin{theorem}\label{Thm:MaxNoether}
Let $X$ be a smooth projective curve of genus $g$ over a nonarchimedean field such that the minimal skeleton $\Gamma$ of $X^{\an}$ is trivalent and 3-edge-connected with first Betti number $g$.  Then there are $3g-3$ rational functions in the image of $\mu_2 : Sym^2 \cL (K_X) \to \cL (2K_X)$ whose tropicalizations are tropically independent.  In particular,  $\mu_2$ is surjective.
\end{theorem}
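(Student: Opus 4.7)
The plan is to mirror the template of Theorem~\ref{thm:mainthm}, taking $K_X$ in place of $D_X$ and Theorem~\ref{Thm:TropicalMaxNoether} in place of the tropical independence result for chains of loops. Because $\Gamma$ is trivalent and 3-edge-connected, the canonical divisor $K_\Gamma$ has coefficient $\deg(v) - 2 = 1$ at every vertex, which rules out the tropical behavior that would force hyperellipticity, and on the algebraic side Riemann--Roch gives $\dim \cL(2K_X) = 3g-3$. So exhibiting $3g-3$ linearly independent elements in $\mu_2(\Sym^2 \cL(K_X))$ is equivalent to surjectivity.

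The first step is to revisit the proof of Theorem~\ref{Thm:TropicalMaxNoether} and extract the tropically independent family of $3g-3$ functions in $2R(K_\Gamma)$ as an explicit set of sums $\psi_i + \psi_j$, with each $\psi_i \in R(K_\Gamma)$ drawn from a finite list of piecewise linear functions of a controlled combinatorial type. If that theorem is not already formulated to display the independent set in this shape, I would reorganize its proof so that each function in the list is visibly a pointwise sum of two tropical canonical sections, since only such sums can arise as tropicalizations of products of honest sections of $K_X$.

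The second step, which I expect to be the main obstacle, is the simultaneous lifting of the $\psi_i$ to rational functions $f_i \in \cL(K_X)$ with $\trop(f_i) = \psi_i$. This requires more than the divisor lifting of \cite{LiftingDivisors}: one must show that each of the chosen piecewise linear functions actually lies in the image of the tropicalization map $\cL(K_X) \setminus \{0\} \to R(K_\Gamma)$. The tool for this is the nonarchimedean Poincar\'e--Lelong formula, combined with the structural analysis of tropicalizations of sections from \cite{tropicalGP}. The combinatorial hypotheses on $\Gamma$ enter in an essential way here: 3-edge-connectivity gives the skeleton enough homological rigidity that no $\psi_i$ is forced to carry extra zeros or poles, while trivalence controls the local slope data at each vertex and keeps the realizable tropicalizations dense enough to include the $\psi_i$ we need.

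Once the $f_i$ have been constructed, the remainder is formal. The products $f_i \cdot f_j$ lie in the image of $\mu_2$, and $\trop(f_i f_j) = \psi_i + \psi_j$. Any linear dependence $\sum c_{ij} f_i f_j = 0$ in $\cL(2K_X)$ would tropicalize to a tropical dependence among the corresponding $\psi_i + \psi_j$, contradicting Theorem~\ref{Thm:TropicalMaxNoether}. So the $f_i f_j$ give $3g - 3$ linearly independent elements in $\mu_2(\Sym^2 \cL(K_X)) \subseteq \cL(2K_X)$ whose tropicalizations are tropically independent, and by the dimension count $\mu_2$ is surjective.
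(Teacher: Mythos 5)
Your overall architecture matches the paper's: Theorem~\ref{Thm:TropicalMaxNoether} is proved by assigning to each edge $e$ a function $\psi_e = \psi_1 + \psi_2$, where $\psi_1,\psi_2 \in R(K_\Gamma)$ have minimum loci equal to two loops $\Gamma_1,\Gamma_2$ with $\Gamma_1 \cap \Gamma_2 = e$ (produced via Menger's theorem), and each $\psi_i$ is then realized as $\trop(f_i)$ for some $f_i \in \cL(K_X)$, after which the argument is formal.  So the plan of ``rewrite the independent family as sums of tropical canonical sections, lift, multiply'' is exactly right.

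Where you go astray is in locating the difficulty and in how the hypotheses enter.  You flag the lifting of each $\psi_i$ to a section of $K_X$ as ``the main obstacle,'' to be attacked with Poincar\'e--Lelong, the structural analysis of \cite{tropicalGP}, and an essential use of trivalence and 3-edge-connectivity for ``homological rigidity.''  In the paper this step is Proposition~\ref{Prop:CanonicalSections}, and it is elementary: one chooses $g-1$ valence-2 points $v_1,\dots,v_{g-1}$ in $\Gamma \smallsetminus \Gamma'$ with connected complement, lifts them to points $p_1,\dots,p_{g-1}$ on $X$, and uses only that $K_X$ has rank $g-1$ to find $f \in \cL(K_X)$ with $\ddiv(f) + K_X - p_1 - \cdots - p_{g-1}$ effective; specializing, $\trop(f)$ has the required minimum locus.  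Crucially, one is not handed a target $\psi$ in advance that must be shown realizable --- one \emph{constructs} $\psi$ as $\trop(f)$ directly, so there is no realizability gap to close and no need for Poincar\'e--Lelong beyond the basic specialization of divisors.  Moreover, trivalence and 3-edge-connectivity do not enter the lifting at all: trivalence is used only in Lemma~\ref{Lemma:2Canonical} to force the minimum locus of any $\theta \in R(2K_\Gamma)$ to be a union of edges, and 3-edge-connectivity only in the Menger step producing the two loops.  If you present the lifting as requiring those hypotheses, a careful reader will be misled about where the combinatorics is doing work.  Reorient the proof so that Proposition~\ref{Prop:CanonicalSections} (with its simple rank-$(g-1)$ argument) handles the analytic input, and reserve the graph-theoretic hypotheses for the purely tropical independence argument.
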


\noindent The last statement, on surjectivity of $\mu_2$, also follows from Noether's theorem, because trivalent, 3-edge connected graphs are never hyperelliptic \cite[Lemma~5.3]{BakerNorine09}.

\begin{remark}
The present article is a sequel to \cite{tropicalGP}, further developing the method of tropical independence.  This is just one aspect of the tropical approach to linear series, an array of techniques for handling degenerations of linear series over a one parameter family of curves where the special fiber is not of compact type, combining discrete methods with computations on skeletons of Berkovich analytifications.  Seminal works in the development of this theory include \cite{BakerNorine07, Baker08, AminiBaker15}.  Combined with techniques from $p$-adic integration, this method also leads to uniform bounds on rational points for curves of fixed genus with small Mordell--Weil rank \cite{KRZB15}.

This tropical approach is in some ways analogous to the theory of limit linear series, developed by Eisenbud and Harris in the 1980s, which systematically studies the degeneration of linear series to singular curves of compact type \cite{EisenbudHarris86}.  This theory led to simplified proofs of the Brill--Noether and Gieseker--Petri theorems \cite{EisenbudHarris83c}, along with many new results about the geometry of curves, linear series, and moduli \cite{EisenbudHarris87, EisenbudHarris87b, EisenbudHarris87c, EisenbudHarris89}.  Tropical methods have also led to new proofs of the Brill--Noether and Gieseker--Petri theorems \cite{tropicalBN, tropicalGP}.  Some progress has been made toward building frameworks that include both classical limit linear series and also generalizations of limit linear series for curves not of compact type \cite{AminiBaker15, Osserman14b, Osserman14}, which are helpful for explaining connections between the tropical and limit linear series proofs of the Brill--Noether theorem.  These relations are also addressed in \cite[Remark~1.4]{tropicalGP} and \cite{CLMTiB14}.  The nature of the relations between the tropical approach and more classical approaches for results involving multiplication maps, such as the Gieseker--Petri theorem and other maximal rank results, remain unclear, as do the relations between such basic and essential facts as the Riemann--Roch theorems for algebraic and tropical curves.

Note that several families of curves appearing in proofs of the Brill--Noether and Gieseker--Petri theorems are not contained in the open subset of $\overline{\mathcal{M}}_g$ for which the maximal rank condition holds.  For example, the sections of $K3$ surfaces used by Lazarsfeld in his proof of the Brill--Noether and Gieseker--Petri theorems without degenerations \cite{Lazarsfeld86} do not satisfy the maximal rank conjecture for $m = 2$ \cite[Theorem~0.3 and Proposition~3.2]{Voisin92}.  Furthermore, the stabilizations of the flag curves used by Eisenbud and Harris are limits of such curves \cite[Proposition~7.2]{FarkasPopa05}.
\end{remark}

\noindent \textbf{Acknowledgments.} We thank D.~Abramovich, E.~Ballico, and D.~Ranganathan for helpful comments on an earlier version of this paper.

The second author was supported in part by NSF CAREER DMS--1149054 and is grateful for ideal working conditions at the Institute for Advanced Study in Spring 2015.

\section{Preliminaries} \label{sec:prelim}

Recall that a general curve $X$ of genus $g$ has a linear series of rank $r$ and degree $d$ if and only if the Brill--Noether number
\[
\rho(g,r,d) = g - (r+1)(g-d+r)
\]
is nonnegative, and the scheme $\cG^r_d(X)$ parametrizing its linear series of degree $d$ and rank $r$ is smooth of pure dimension $\rho(g,r,d)$.  This scheme is irreducible when $\rho(g,r,d)$ is positive, and monodromy acts transitively when $\rho(g,r,d) = 0$.  Therefore, if $U \subset \mathcal{M}_g$ is the dense open set parametrizing such \emph{Brill--Noether--Petri general} curves, then $\cG^r_d(U)$, the universal linear series of rank $r$ and degree $d$ over $U$, is smooth and irreducible of relative dimension $\rho(g,r,d)$.  The \emph{general linear series of degree $d$ and rank $r$ on a general curve of genus $g$} appearing in the statement of the maximal rank conjecture refers simply to a general point in the irreducible space $\cG^r_d(U)$.

When $X$ is Brill--Noether--Petri general and $D_X$ is a basepoint free divisor of rank at least 1, the basepoint free pencil trick shows that its multiples $mD_X$ are nonspecial for $m \geq 2$ (see Remark~\ref{Rem:Nonspecial}).  Therefore, by standard upper semincontinuity arguments from algebraic geometry and the fact that $\cG^r_d$ is defined over $\Spec \ZZ$, to prove the maximal rank conjecture for fixed $g$, $r$, $d$, and $m$, over an arbitrary algebraically closed field of given characteristic, it suffices to produce a single Brill--Noether--Petri general curve $X$ of genus $g$ over a field of the same characteristic with a linear series $V \subset \cL(D_X)$ of degree $d$ and rank $r$ such that $\mu_m$ has maximal rank.  As mentioned in the introduction, the maximal rank conjecture is known when the linear series is nonspecial.  In the remaining cases, the general linear series is complete, so we can and do assume that $V = \cL(D_X)$.

\begin{remark}\label{Rem:Nonspecial}
Suppose $D_X$ is a basepoint free special divisor of rank $r \geq 1$ on a Brill--Noether--Petri general curve $X$.  The fact that $mD_X$ is nonspecial for $m \geq 2$ is an application of the basepoint free pencil trick, as follows.  Choose a basepoint free pencil $V \subset \cL(D_X)$.  Then the trick identifies $\cL(K_X - 2D_X)$ with the kernel of the multiplication map
\[
\mu: V \otimes \cL(K_X - D_X) \rightarrow \cL(K_X).
\]
The Petri condition says that this multiplication map is injective, even after replacing $V$ by $\cL(D_X)$.  Therefore, there are no sections of $K_X$ that vanish on $2D_X$ and hence no sections that vanish on $mD_X$ for $m \geq 2$, which means that $mD_X$ is nonspecial.
\end{remark}

\begin{remark}
When $r \geq 3$ and $\rho(g,r,d) \geq 0$, the general linear series of degree $d$ on a general curve of genus $g$ defines an embedding in $\mathbb{P}^r$, and hence the conjecture can be rephrased in terms of a general point of the corresponding component of the appropriate Hilbert scheme.  One can also consider analogues of the maximal rank conjecture for curves that are general in a given irreducible component of a given Hilbert scheme, rather than general in moduli.  However, the maximal rank condition can fail when the Hilbert scheme in question does not dominate $\mathcal M_g$.  Suppose, for example, that $X$ is a curve of genus 8 and degree 8 in $\mathbb{P}^3$.  Then $h^0 (\mathcal{O}_X (2)) = 9$, and hence $X$ is contained in a quadric surface.  It follows that the kernel of $\mu_3$ has dimension at least 4, and therefore $\mu_3$ is not surjective.  This does not contradict the maximal rank conjecture, since the general curve of genus $8$ has no linear series of rank $3$ and degree $8$.
\end{remark}

\begin{proof}[Proof of Theorem~\ref{thm:allm}]
We assume $\mu_2$ is surjective.  Suppose $r \geq 4$.  We begin by showing that $\mu_3$ is surjective.  Note the polynomial identity
$$ {{r+2}\choose{2}} - (2d-g+1) = {{d-g}\choose{2}} - {{g-d+r}\choose{2}} - \rho (g,r,d) . $$
(This identity reappears as Lemma \ref{lem:identity}, in the special case $\rho(g,r,d) = 0$.)  By assumption, the left hand side is nonnegative, as are $\rho (g,r,d)$ and $g-d+r$.  It follows that $d \geq g$.  By \cite[Exercise B-6, p. 138]{ACGH}\footnote{The statement of the exercise is missing a necessary hypothesis, that $\mathcal{D}$ has rank at least 3.  The solution following the hint requires the uniform position lemma, which is known for $r \geq 3$ in characteristic zero \cite{Harris80} and, over arbitrary fields, when $r \geq 4$ \cite{Rathmann87}.}, it follows that the dimension of the linear series spanned by sums of divisors in $\vert D_X \vert$ and $\vert 2D_X \vert$ is at least
$$ \min \{ 4d-2g, 3d-g \} = 3d-g. $$  Therefore, if $\mu_2$ is surjective then $\mu_3$ is also surjective.  

We now show, by induction on $m$, that $\mu_m$ is surjective for all $m>3$.  Let $V \subset \cL (D_X)$ be a basepoint free pencil.  By the basepoint free pencil trick, we have an exact sequence
$$ 0 \to \cL ((m-1)D_X) \to V \otimes \cL (mD_X) \to \cL ((m+1)D_X) . $$
Since $(m-1)D_X$ and $mD_X$ are both nonspecial, the image of the right hand map has dimension
$$ 2(md-g+1) - ((m-1)d-g+1) = (m+1)d-g+1, $$
hence it is surjective.

It remains to consider the cases where $r = 3$.  By assumption, the divisor $D_X$ is special, so $d < g + 3$.  Furthermore, $\mu_2$ is surjective, so $2d - g + 1 \leq 10$, and $\rho(g,r,d) \geq 0$, so $3g \leq 4d - 12$.  This leaves exactly two possibilities for $(g,d)$, namely $(4,6)$ and $(5,7)$.  In each of these cases, $h^1(\mathcal{O}(D_X)) = 1$ and, since $X$ is Brill--Noether general, $\mathrm{Cliff}(X) = \left \lfloor \frac{g-1}{2} \right \rfloor.$  Then $d = 2g + 1 - h^1(\mathcal{O}(D_X)) - \mathrm{Cliff}(X)$ and hence $\mathcal{O}(D_X)$ gives a projectively normal embedding, by \cite[Theorem~1]{GreenLazarsfeld86}.
\end{proof}


Since we are trying to produce a single sufficiently general curve of each genus over a field of each characteristic, we may, for simplicity, assume that we are working over an algebraically closed field  that is spherically complete with respect to a valuation that surjects onto the real numbers.  Any metric graph $\Gamma$ of first Betti number $g$ appears as the skeleton of a smooth projective genus $g$ curve $X$ over such a field (see, for instance, \cite{acp}).

Recall that the skeleton is a subset of the set of valuations on the function field of $X$, and evaluation of these valuations, also called tropicalization, takes each rational function $f$ on $X$ to a piecewise linear function with integer slopes on $\Gamma$, denoted $\trop(f)$.

Our primary tool for using the skeleton of a curve and tropicalizations of rational functions to make statements about ranks of multiplication maps is the notion of tropical independence developed in \cite{tropicalGP}.

\begin{definition}
A set of piecewise linear functions $\{ \psi_0, \ldots, \psi_r \}$ on a metric graph $\Gamma$ is \emph{tropically dependent} if there are real numbers $b_0, \ldots, b_r$ such that for every point $v$ in $\Gamma$ the minimum
\[
\min \{\psi_0(v) + b_0, \ldots, \psi_r(v) + b_r \}
\]
occurs at least twice.  If there are no such real numbers then $\{ \psi_0, \ldots, \psi_r \}$ is \emph{tropically independent}.
\end{definition}

\noindent One key basic property of this notion is that if $\{ \trop(f_i) \}_{i}$ is tropically independent on $\Gamma$, then the corresponding set of rational functions $\{ f_i \}_{i}$ is linearly independent in the function field of $X$ \cite[\S3.1]{tropicalGP}.  Note also that if $f$ and $g$ are rational functions, then $\trop(f \cdot g) = \trop(f) + \trop(g)$.

\begin{remark}
Adding a constant to each piecewise linear function does not affect the tropical independence of a given collection.  When $\{ \psi_0, \ldots, \psi_r \}$ is tropically dependent, we often replace each $\psi_i$ with $\psi_i + b_i$ and assume that the minimum of the set $\{ \psi_0(v), \ldots, \psi_r(v) \}$ occurs at least twice at every point $v \in \Gamma$.
\end{remark}

\begin{lemma}
\label{Lem:MultiplicationMaps}
Let $D_X$ be a divisor on $X$, and let $\{f_0, \ldots, f_r\}$ be rational functions in $\cL(D_X)$.  If there exist $k$ multisets $I_1 , \ldots, I_k \subset \{0,\ldots,r\}$, each of size $m$, such that $\{\sum_{i \in I_j} \trop(f_i) \}_j$ is tropically independent, then the multiplication map
\[
\mu_m: Sym^m \cL(D_X) \rightarrow \cL(mD_X)
\]
has rank at least $k$.
\end{lemma}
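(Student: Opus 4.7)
The plan is to exhibit, for each multiset $I_j$, an explicit element of $\Sym^m \cL(D_X)$ whose image under $\mu_m$ has the prescribed tropicalization, and then invoke the basic property of tropical independence recalled just above the statement of the lemma.

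More concretely, for each $j \in \{1, \ldots, k\}$ I would form the monomial
\[
\sigma_j := \prod_{i \in I_j} f_i \in \Sym^m \cL(D_X),
\]
where the product is taken in the symmetric algebra and the indices are counted with multiplicity according to the multiset $I_j$. Its image under the multiplication map is the ordinary product of the $f_i$ in the function field of $X$, which lies in $\cL(mD_X)$. Since tropicalization converts products to sums, we have
\[
\trop\bigl(\mu_m(\sigma_j)\bigr) \;=\; \sum_{i \in I_j} \trop(f_i),
\]
where the sum on the right is again taken with multiplicity.

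The hypothesis is precisely that these $k$ piecewise linear functions $\{\sum_{i \in I_j} \trop(f_i)\}_{j=1}^{k}$ form a tropically independent set on $\Gamma$. The key property of tropical independence recorded in the preceding discussion (cited from \cite[\S3.1]{tropicalGP}) then tells us that the corresponding rational functions $\{\mu_m(\sigma_j)\}_{j=1}^{k}$ are linearly independent in the function field of $X$, hence also linearly independent in $\cL(mD_X)$. Since these are $k$ linearly independent elements of the image of $\mu_m$, the rank of $\mu_m$ is at least $k$.

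There is no real obstacle here: the lemma is essentially a formal translation of the algebraic question about ranks of multiplication maps into the combinatorial question about tropical independence of sums of tropicalizations, and the only input beyond formalities is the already-established fact that tropical independence of tropicalizations implies linear independence of the rational functions themselves. The substantive work in the paper lies in verifying the tropical independence hypothesis for well-chosen multisets $I_j$, not in this lemma.
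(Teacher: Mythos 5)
Your proof is correct and takes essentially the same approach as the paper: form the products $\prod_{i \in I_j} f_i$, observe that they lie in the image of $\mu_m$ and tropicalize to $\sum_{i\in I_j}\trop(f_i)$, and then invoke the fact that tropical independence implies linear independence. The paper's proof is a three-sentence version of exactly this argument.
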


\begin{proof}
The tropicalization of $\prod_{i \in I_j} f_i$ is the corresponding sum $\sum_{i \in I_j} \trop(f_i)$.  If these sums $\{\sum_{i \in I_j} \trop(f_i)\}_j$ are tropically independent then the rational functions $\{ \prod_{i \in I_j} f_i \}_j$ are linearly independent.  These $k$ rational functions are in the image of $\mu_m$, and the lemma follows.
\end{proof}

\begin{remark}
If $f_0, \ldots, f_r$ are rational functions in a linear series $\cL(D_X)$, and $b_0, \ldots, b_r$ are real numbers, then the pointwise minimum
\[
\theta = \min\{ \trop(f_0) + b_0, \ldots, \trop(f_r) + b_r \}
\]
is the tropicalization of a rational function in $\cL(D_X)$.  The rational function may be chosen of the form $a_0 f_0 + \cdots + a_r f_r$ where $a_i$ is a sufficiently general element of the ground field such that $\val(a_i) = b_i$.
\end{remark}

We will also repeatedly use the following basic fact about the shapes of divisors associated to a pointwise minimum of functions in a tropical linear series.

\medskip

\begin{ShapeLemma} \cite[Lemma~3.4]{tropicalGP}
Let $D$ be a divisor on a metric graph $\Gamma$, with $\psi_0, \ldots, \psi_r$ piecewise linear functions in $R(D)$, and let
\[
\theta = \min \{ \psi_0, \ldots, \psi_r \}.
\]
Let $\Gamma_j \subset \Gamma$ be the closed set where $\theta$ is equal to $\psi_j$.  Then $\ddiv( \theta ) +D$ contains a point $v \in \Gamma_j$ if and only if $v$ is in either
\begin{enumerate}
\item  the divisor $\ddiv( \psi_j ) + D$, or
\item  the boundary of $\Gamma_j$.
\end{enumerate}
\end{ShapeLemma}

\noindent This shape lemma for minima is combined with another lemma about shapes of canonical divisors to reach the contradiction that proves the Gieseker--Petri theorem in \cite{tropicalGP}.

\section{Max Noether's theorem}

Here we examine functions in the canonical and 2-canonical linear series using trivalent and 3-edge-connected graphs.  This section is not logically necessary for the proof of Theorem~\ref{thm:mainthm}, and can be safely skipped by a reader who is interested only in the proof of the maximal rank conjecture for quadrics.  Nevertheless, the two are not unrelated and we include this section because, as explained in the introduction, Noether's theorem is a strong form of one case of the maximal rank conjecture for quadrics.  Also, the arguments presented here illustrate the potential for applying our methods to the study of linear series and multiplication maps using skeletons other than a chain of loops, which may be important for future work.

\medskip

Our arguments in this section depend on a careful analysis of the loci where piecewise linear functions attain their minima.
Recall that, for a divisor $D$ on a metric graph $\Gamma$, the tropical linear series $R(D)$ is the set of piecewise linear functions with integer slope $\psi$ on $\Gamma$ such that $\ddiv(\psi) + D$ is effective.  The tropical linear series $R(D)$ is a tropical module, which means that it is closed under scalar addition and pointwise minimum \cite[Lemma~4]{HMY12}.  For $v \in \Gamma$, we write $\deg_v(D)$ for the coefficient of $v$ in the divisor $D$, and for  a piecewise linear function $\psi$, we write
$$ \Gamma_\psi = \{ v \in \Gamma \ \vert \ \psi(v) = \min_{w \in \Gamma} \psi(w) \} $$
for the subgraph on which $\psi$ attains its global minimum.

\begin{lemma}
\label{Lemma:Outdegree}
Let $D$ be a divisor on $\Gamma$ with $\psi \in R(D)$.  Then, for any point $v \in \Gamma_\psi$,
$$ \outdeg_{\Gamma_\psi} (v) \leq \deg_v (D) . $$
\end{lemma}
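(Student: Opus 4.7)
The plan is to extract the claim from the effectivity of $\ddiv(\psi) + D$ at $v$, after analyzing which tangent directions at $v$ contribute positively to the outgoing slopes of $\psi$. Writing $\ddiv(\psi)(v)$ as minus the sum $\sum_\zeta s_\zeta(\psi)$ of outgoing slopes of $\psi$ over tangent directions $\zeta$ at $v$, effectivity of $\ddiv(\psi) + D$ at $v$ is equivalent to
\[
\deg_v(D) \; \geq \; \sum_\zeta s_\zeta(\psi),
\]
so it will be enough to bound the right-hand side below by $\outdeg_{\Gamma_\psi}(v)$.

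Next, I would partition the tangent directions at $v$ into those lying in $\Gamma_\psi$ and those leaving $\Gamma_\psi$. Because $\psi$ is constant on $\Gamma_\psi$, equal to its global minimum, $s_\zeta(\psi) = 0$ for every tangent direction $\zeta$ that stays in $\Gamma_\psi$. On the other hand, if $\zeta$ leaves $\Gamma_\psi$ at $v$, then no initial segment along $\zeta$ lies in $\Gamma_\psi$, so $\psi$ strictly exceeds its minimum value arbitrarily close to $v$ along $\zeta$; this forces $s_\zeta(\psi) > 0$, and since the slopes of $\psi$ are integers, $s_\zeta(\psi) \geq 1$.

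Summing the contributions from the $\outdeg_{\Gamma_\psi}(v)$ directions that leave $\Gamma_\psi$, we obtain $\sum_\zeta s_\zeta(\psi) \geq \outdeg_{\Gamma_\psi}(v)$, which combined with the effectivity inequality above yields the desired bound $\deg_v(D) \geq \outdeg_{\Gamma_\psi}(v)$. There is no real obstacle here: the lemma is a direct consequence of three basic features of the setup, namely that $\psi$ is constant on its minimum locus, that outgoing slopes of $\psi$ are integer-valued, and that $\psi$ lies in $R(D)$.
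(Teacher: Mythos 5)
Your proof is correct and follows essentially the same reasoning as the paper: interpret effectivity of $\ddiv(\psi)+D$ at $v$ as a bound on the sum of outgoing slopes, observe that slopes along directions staying in $\Gamma_\psi$ are nonnegative (indeed zero, since $\psi$ is constant there) while those leaving $\Gamma_\psi$ are strictly positive and hence at least $1$ by integrality. The paper states this more tersely but the structure is identical.
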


\begin{proof}
Since $\psi$ obtains its minimum value at $v$, all of the outgoing slopes of $\psi$ at $v$ are nonnegative, and those along edges that are not in $\Gamma_\psi$ are strictly positive.  Since all of these slopes are integers and $\ddiv (\psi) + D$ is effective, it follows that $\outdeg_{\Gamma_\psi}(v)$ is at most $\deg_v(D)$.
\end{proof}

Recall that the canonical divisor $K_\Gamma$ is given by
\[
\deg_v(K_\Gamma) = \val(v) - 2,
\]
where $\val(v)$ is the valence (or number of outgoing edges) of $v$ in $\Gamma$. The following lemma restricts the loci on which functions in $R(K_\Gamma)$ attain their minimum.

\begin{lemma}
Let $\psi$ be a piecewise linear function in $R(K_\Gamma)$.  Then the subgraph $\Gamma_\psi$ on which $\psi$ attains its minimum is a union of edges in $\Gamma$ and has no leaves.
\end{lemma}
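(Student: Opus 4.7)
The plan is to reduce both assertions to Lemma~\ref{Lemma:Outdegree}. I would begin by refining the edge model of $\Gamma$ so that $\psi$ is linear on every edge; equivalently, so that every point in the support of $\ddiv(\psi)$ is a vertex. On each edge of the refined model $\psi$ is then either constant (in which case the entire edge lies in $\Gamma_\psi$) or strictly monotone (in which case $\Gamma_\psi$ meets the edge only at an endpoint, if at all). Consequently $\Gamma_\psi$ is a union of closed edges of the refined model, possibly together with some isolated vertices.

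To finish, I would rule out both isolated vertices of $\Gamma_\psi$ and leaves. If $v$ were an isolated vertex of $\Gamma_\psi$, then every edge of the refined model at $v$ would exit $\Gamma_\psi$, giving $\outdeg_{\Gamma_\psi}(v) = \val(v)$; since $\deg_v(K_\Gamma) = \val(v) - 2$, this would contradict Lemma~\ref{Lemma:Outdegree}. Likewise, if $v$ were a leaf of $\Gamma_\psi$, exactly one of the $\val(v)$ edges at $v$ would lie in $\Gamma_\psi$, so $\outdeg_{\Gamma_\psi}(v) = \val(v) - 1 > \val(v) - 2 = \deg_v(K_\Gamma)$, again contradicting Lemma~\ref{Lemma:Outdegree}.

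The only step where any care is needed is the initial choice of edge model: the subdivision must be fine enough that $\Gamma_\psi$ is honestly a subcomplex, so that the outdegree count at each candidate bad vertex is unambiguous and the computation $\outdeg_{\Gamma_\psi}(v) \in \{\val(v), \val(v)-1\}$ is genuinely correct. Once that is in place, the whole argument is driven by the trivial inequalities $\val(v) - 2 < \val(v) - 1 < \val(v)$ together with the outdegree bound, so I do not anticipate any real obstacle; note in particular that no trivalence or edge-connectivity hypothesis on $\Gamma$ is used, only the formula $\deg_v(K_\Gamma) = \val(v) - 2$.
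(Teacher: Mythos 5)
Your proof is correct and takes essentially the same route as the paper's: both arguments reduce everything to Lemma~\ref{Lemma:Outdegree}, applied once at interior-of-edge points (valence two, so outdegree $\leq 0$) to conclude $\Gamma_\psi$ is a union of full edges, and once at candidate leaves or isolated vertices to derive the contradiction $\outdeg_{\Gamma_\psi}(v) > \val(v)-2$; your preliminary refinement of the edge model is a harmless bookkeeping step that the paper avoids by applying the outdegree bound directly at arbitrary points. One tiny slip worth noting: ``$\psi$ constant on a refined edge'' does not by itself put that edge in $\Gamma_\psi$ (the constant value must be the global minimum), though your stated conclusion --- that $\Gamma_\psi$ is a union of closed refined edges plus possibly isolated vertices --- remains correct.
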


\begin{proof}
By Lemma~\ref{Lemma:Outdegree}, the outdegree $\outdeg_v(\Gamma_\psi)$ is at most $\deg(v)-2$ at each point $v \in \Gamma_\psi$.  It follows that any edge which contains a point of $\Gamma_\psi$ in its interior is entirely contained in $\psi$, and the number of edges in $\Gamma_\psi$ containing any vertex $v$ is at least two, so $\Gamma_\psi$ has no leaves.
\end{proof}

As a first application, we show that every loop in $\Gamma$ is the locus where some function in $R(K_\Gamma)$ attains its minimum, and that this function lifts to a canonical section on any totally degenerate curve whose skeleton is $\Gamma$.  Here, a \emph{loop} is an embedded circle in $\Gamma$ or, equivalently, a connected subgraph in which every point has valence 2.

\begin{proposition}
\label{Prop:CanonicalSections}
Let $\Gamma$ be a metric graph and let $\Gamma' \subset \Gamma$ be a loop.  Then there is a function $\psi \in R(K_{\Gamma})$ such that the subgraph $\Gamma_\psi$ on which $\psi$ attains its minimum is exactly $\Gamma'$.

Furthermore, if $X$ is a smooth projective curve over a nonarchimedean field such that the minimal skeleton of the Berkovich analytic space $X^{\an}$ is isometric to $\Gamma$, and $K_X$ is a canonical divisor that tropicalizes to $K_\Gamma$, then $\psi$ can be chosen to be $\trop(f)$ for some $f \in \cL(K_X)$.
\end{proposition}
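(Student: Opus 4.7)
The plan is to take $\psi : \Gamma \to \RR_{\geq 0}$ to be the intrinsic distance function $\psi(v) = d(v,\Gamma')$ from $v$ to the loop $\Gamma'$. After subdividing at cut-locus points (which lie at rational parameters relative to the edge lengths), $\psi$ is piecewise linear with integer outgoing slopes in $\{-1, 0, +1\}$, with the slope $0$ occurring exactly along edges of $\Gamma'$. Since $\psi$ vanishes exactly on $\Gamma'$ and is strictly positive off $\Gamma'$, the identification $\Gamma_\psi = \Gamma'$ is immediate, which handles the set-theoretic part of the statement.

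The substantive claim is that $\psi \in R(K_\Gamma)$, which I would verify by showing that $E(v) := \ddiv(\psi)(v) + \deg_v(K_\Gamma) \geq 0$ pointwise via a local case analysis. At a point $v \in \Gamma'$ of valence $k$, the two edges of $\Gamma'$ at $v$ carry outgoing slope $0$ and the $k-2$ edges leaving $\Gamma'$ carry outgoing slope $+1$, giving $\ddiv(\psi)(v) = -(k-2)$, which exactly cancels $\deg_v(K_\Gamma) = k-2$. At a generic interior point of an edge off $\Gamma'$, the outgoing slopes are $+1$ and $-1$, and both $\ddiv(\psi)$ and $\deg_v(K_\Gamma)$ vanish. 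At an interior cut-locus point, where two shortest paths to $\Gamma'$ meet, both outgoing slopes are $-1$, so $E(v) = 2$. Finally, at a vertex $v \notin \Gamma'$ of valence $k$, connectedness of $\Gamma$ forces at least one edge at $v$ to begin a shortest path to $\Gamma'$ (outgoing slope $-1$); writing $a \geq 1$ for the number of such edges (the remaining $k-a$ having outgoing slope $+1$), one computes $\ddiv(\psi)(v) = 2a - k$ and $E(v) = (2a-k) + (k-2) = 2a-2 \geq 0$.

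For the lifting statement, I would use the nonarchimedean Poincar\'e--Lelong formula together with a divisor lifting theorem. The effective divisor $E = \ddiv(\psi) + K_\Gamma$ on $\Gamma$ is linearly equivalent to $K_\Gamma$, and by the lifting theorem of \cite{LiftingDivisors} it is the tropicalization of some effective divisor $E_X$ on $X$ with $E_X \sim K_X$. Any rational function $f \in \cL(K_X)$ with $\ddiv(f) = E_X - K_X$ then satisfies $\ddiv(\trop(f)) = \ddiv(\psi)$ by Poincar\'e--Lelong, so $\trop(f) - \psi$ is a piecewise linear function with zero divisor on the connected graph $\Gamma$, hence constant; rescaling $f$ by a scalar of appropriate valuation normalizes this constant to zero, yielding $\trop(f) = \psi$. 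The main obstacle I expect is the lifting step itself, since the result in \cite{LiftingDivisors} is formulated for chains of loops whereas here $\Gamma$ is arbitrary; one may need to invoke a more general lifting principle (e.g., exploiting that $E$ lies in the canonical class, which is automatically liftable to $K_X$), or to argue directly using the correspondence between canonical differentials on $X$ and the cycle structure of $\Gamma$.
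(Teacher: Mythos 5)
Your direct construction for the first half is correct and genuinely different from the paper's. You take $\psi = d(\,\cdot\,, \Gamma')$ and verify $\psi \in R(K_\Gamma)$ by a clean local slope count; your case analysis at points on $\Gamma'$, generic edge points, cut-locus points, and vertices off $\Gamma'$ is right (the key inequality $E(v) = 2a - 2 \geq 0$ at off-loop vertices rests only on connectedness, $a \geq 1$), and $\Gamma_\psi = \Gamma'$ is immediate because $\psi$ vanishes exactly on $\Gamma'$. The paper instead chooses $g-1$ valence-2 points $v_1, \ldots, v_{g-1}$ in $\Gamma \smallsetminus \Gamma'$ whose removal leaves $\Gamma$ connected, uses the rank $g-1$ of $K_\Gamma$ to find $\psi$ with $K_\Gamma + \ddiv(\psi) \geq v_1 + \cdots + v_{g-1}$, and then deduces $\Gamma_\psi = \Gamma'$ from Lemma~\ref{Lemma:Outdegree}: the minimum locus avoids each $v_i$, has no leaves, and so must be the unique loop in $\Gamma \smallsetminus \{v_1, \ldots, v_{g-1}\}$. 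Your argument is more explicit; the paper's is less so, but it is designed precisely to transfer to the curve.

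That transferability is where your proposal breaks down, and you correctly flag the obstruction yourself: there is no lifting theorem for arbitrary metric graphs, and \cite{LiftingDivisors} applies only to chains of loops. The proposed patch --- that an effective divisor in the class $K_\Gamma$ is ``automatically liftable'' to an effective divisor in the class $K_X$ --- is not a known fact and is exactly the kind of rank-constrained lifting statement that is hard in general. Surjectivity of $\Pic(X) \to \Pic(\Gamma)$ lets you lift the class, and the large fibers of $\trop$ let you lift $E$ set-theoretically, but there is no reason the two choices can be made compatibly, i.e., no reason a set-theoretic lift of $E$ lands in the class of $K_X$. So as written the second half has a real gap. The paper sidesteps it entirely: rather than lifting the divisor $E$ or the function $\psi$, it lifts the \emph{conditions}. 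Pick $p_1, \ldots, p_{g-1} \in X$ specializing to $v_1, \ldots, v_{g-1}$; since $K_X$ has rank $g-1$ there is $f \in \cL(K_X)$ with $\ddiv(f) + K_X - p_1 - \cdots - p_{g-1}$ effective; specialization then gives $\ddiv(\trop f) + K_\Gamma - v_1 - \cdots - v_{g-1}$ effective, and the same combinatorial argument as in the first half forces $\Gamma_{\trop f} = \Gamma'$. Notice this does not assert $\trop f$ equals any prescribed $\psi$ (in particular, not your distance function); the proposition only asks for \emph{some} $\psi$ with $\Gamma_\psi = \Gamma'$ of the form $\trop(f)$, and the paper exploits that freedom. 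To salvage your approach you would either need to prove your specific $\psi$ lifts --- which seems out of reach --- or switch to an argument like the paper's that imposes liftable point conditions rather than fixing the PL function in advance.
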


\begin{proof}
Let $g$ be the first Betti number of $\Gamma$.  Choose points $v_1 , \ldots , v_{g-1}$ of valence 2 in $\Gamma \smallsetminus \Gamma'$ such that $\Gamma \smallsetminus \{ v_1 , \ldots , v_{g-1} \}$ is connected.  Since $K_\Gamma$ has rank $g-1$, there is a divisor $D \sim K_{\Gamma}$ such that $D - v_1 - \cdots - v_{g-1}$ is effective.  Let $\psi$ be a piecewise linear function such that $K_\Gamma + \ddiv(\psi) = D$.

By Lemma~\ref{Lemma:Outdegree}, the subgraph $\Gamma_\psi \subset \Gamma$ where $\psi$ attains its minimum is a union of edges of $\Gamma$ and has no leaves.  Since $\ord_{v_i} (\psi)$ is positive for $1 \leq i \leq g-1$, it follows that $\Gamma_\psi$ does not contain any of the points $v_1, \ldots, v_{g-1}$.  Being a subgraph of $\Gamma \smallsetminus \{v_1, \ldots, v_{g-1} \}$, the first Betti number of $\Gamma_\psi$ is at most 1.  On the other hand, every point has valence at least two in $\Gamma_\psi$.  It follows that $\Gamma_\psi$ is a loop, and hence must be the unique loop $\Gamma'$ contained in $\Gamma \smallsetminus \{v_1, \ldots, v_{g-1} \}$.

We now prove the last part of the proposition.   Let $p_1 , \ldots , p_{g-1}$ be points in $X$ specializing to $v_1 , \ldots , v_{g-1}$, respectively.  Since $K_X$ has rank $g-1$, there is a rational function $f \in \cL (K_X)$ such that $\ddiv (f) + K_\Gamma - p_1 - \cdots - p_{g-1}$ is effective.  From this we see that $\ddiv (\trop (f)) + K_{\Gamma} - v_1 -  \cdots - v_{g-1}$ is effective, and the proposition follows.
\end{proof}

Our next lemma controls the locus where a piecewise linear function in $R(2K_\Gamma)$ attains its minimum, when $\Gamma$ is trivalent.

\begin{lemma}
\label{Lemma:2Canonical}
Suppose $\Gamma$ is trivalent and let $\psi$ be a piecewise linear function in $R(2K_{\Gamma})$.  Then $\Gamma_\psi$ is a union of edges in $\Gamma$.
\end{lemma}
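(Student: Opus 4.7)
The plan is to mimic the argument given for the preceding lemma on $K_\Gamma$, applying Lemma~\ref{Lemma:Outdegree} with $D = 2K_\Gamma$ instead. Since $\Gamma$ is trivalent, every point of $\Gamma$ has valence either $2$ (the interior of an edge) or $3$ (a vertex), so $\deg_v(2K_\Gamma) = 2\val(v)-4$ equals $0$ or $2$ respectively. The goal is to show that every point of $\Gamma_\psi$ is contained in an edge of $\Gamma$ that lies entirely in $\Gamma_\psi$.

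I would argue by cases on the valence of $v \in \Gamma_\psi$. First, suppose $v$ lies in the interior of some edge $e$. Then Lemma~\ref{Lemma:Outdegree} gives $\outdeg_{\Gamma_\psi}(v) \leq 0$, so both tangent directions at $v$ along $e$ lie in $\Gamma_\psi$, and hence a neighborhood of $v$ inside $e$ is contained in $\Gamma_\psi$. Propagating this observation along $e$, exactly as in the previous lemma, shows that all of $e$ lies in $\Gamma_\psi$. Second, suppose $v$ is a trivalent vertex of $\Gamma$ lying in $\Gamma_\psi$. Then $\outdeg_{\Gamma_\psi}(v) \leq 2 < \val(v)$, so at least one of the three tangent directions at $v$ must lie in $\Gamma_\psi$. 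A short initial segment of the corresponding edge $e$ is therefore contained in $\Gamma_\psi$; its interior points fall under the first case, so $e \subset \Gamma_\psi$, and in particular $v$ itself is contained in an edge of $\Gamma_\psi$.

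Together these two cases exhibit $\Gamma_\psi$ as the union of those edges of $\Gamma$ that it contains, completing the proof. The one point that warrants care is the trivalent vertex case: with $D = 2K_\Gamma$ the outdegree bound is only $\leq 2$, not $\leq 0$, so the argument cannot directly rule out leaves or isolated vertices of $\Gamma_\psi$. The crucial input is the strict inequality $\outdeg_{\Gamma_\psi}(v) \leq 2 < 3 = \val(v)$, which guarantees at least one inward tangent direction, after which the interior propagation from the first case takes over.
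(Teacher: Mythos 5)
Your proof is correct and follows essentially the same two-case argument as the paper's, applying Lemma~\ref{Lemma:Outdegree} with $D = 2K_\Gamma$ at interior points and trivalent vertices; you simply spell out the local-to-global propagation along an edge more explicitly than the paper does.
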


\begin{proof}
If $v \in \Gamma_\psi$ lies in the interior of an edge of $\Gamma$ then, by Lemma \ref{Lemma:Outdegree}, we have $\outdeg_{\Gamma_\psi}(v) = 0$, so $\Gamma_\psi$ contains the entire edge.  On the other hand, if $v \in \Gamma_\psi$ is a trivalent vertex of $\Gamma$ then Lemma \ref{Lemma:Outdegree} says that $\outdeg_{\Gamma_\psi} (v) \leq 2$.  It follows that $\Gamma_{\psi}$ contains at least one of the three edges adjacent to $v$.
\end{proof}

We conclude this section by applying this lemma and the preceding proposition together with Menger's theorem to prove Theorem~\ref{Thm:TropicalMaxNoether}, the analogue of Noether's theorem for trivalent 3-connected graphs.

\begin{remark}
A similar application of Menger's theorem is used to prove an analogue of Noether's theorem for graph curves in \cite[\S4]{BayerEisenbud91}.
\end{remark}

\begin{proof}[Proof of Theorems~\ref{Thm:TropicalMaxNoether} and \ref{Thm:MaxNoether}]
Assume $\Gamma$ is trivalent and 3-edge-connected.  Let $e \subset \Gamma$ be an edge with endpoints $v$ and $w$.  Since $\Gamma$ is 3-edge-connected, Menger's theorem says that there are two distinct paths from $v$ to $w$ that do not share an edge and do not pass through $e$.  Equivalently, there are two loops $\Gamma_1$ and $\Gamma_2$ in $\Gamma$ such that $\Gamma_1 \cap \Gamma_2 = e$.  By Proposition~\ref{Prop:CanonicalSections} there are functions $\psi_1$ and  $\psi_2$ in $R(K_{\Gamma})$ such that $\Gamma_{\psi_i} = \Gamma_i$.  We write $\psi_e = \psi_1 + \psi_2$, which is a piecewise linear function in $R(2K_{\Gamma})$.  Note that $\Gamma_{\psi_e} = e$.  Furthermore, again by Proposition~\ref{Prop:CanonicalSections}, if $X$ is a curve with skeleton $\Gamma$ and $K_X$ is a canonical divisor tropicalizing to $K_\Gamma$, then we can choose $f_1$ and $f_2$ in $\cL(K_X)$ such that $\psi_i = \trop(f_i)$, and hence $\psi_e = \trop(f_1 \cdot f_2)$ is the tropicalization of a function in the image of $\mu_2 : \Sym^2(\cL(K_X)) \rightarrow \cL(2K_X)$.

We claim that the set of $3g-3$ functions $\{ \psi_e \}_e$ is tropically independent.  Suppose not.  Then there are constants $b_e$ such that $\min_e \{ \psi_e + b_e \}$ occurs twice at every point of $\Gamma$.  Let
\[
\theta = \min_{e} \{\psi_e + b_e \},
\]
which is a piecewise linear function in $R(2K_\Gamma)$.  By Lemma~\ref{Lemma:2Canonical}, the function $\theta$ achieves its minimum along an edge, and hence there must be two functions in the set $\{\psi_e + b_e \}_e$ that achieve their minima along this edge.  However, by construction, the functions $\psi_e + b_e$ achieve their minima along distinct edges, which is a contradiction. We conclude that $\{ \psi_e \}_e$ is tropically independent, as claimed.
\end{proof}

\section{Special divisors on a chain of loops}
\label{Section:TheGraph}

For the remainder of the paper, we focus our attention on a chain of loops with bridges $\Gamma$, as pictured in Figure \ref{Fig:TheGraph}.  Here, we briefly recall the classification of special divisors on $\Gamma$ from \cite{tropicalBN}, along with the characterization of vertex avoiding classes and their basic properties.

The graph $\Gamma$ has $2g+2$ vertices, one on the lefthand side of each bridge, which we label $w_0, \ldots , w_g$, and one on the righthand side of each bridge, which we label $v_1, \ldots, v_{g+1}$.  There are two edges connecting the vertices $v_k$ and $w_k$, the top and bottom edges of the $k$th loop, whose lengths are denoted $\ell_k$ and $m_k$, respectively, as shown schematically in Figure~\ref{Fig:TheGraph}.
\begin{figure}[h!]
\begin{tikzpicture}

\draw (-2.7,-.5) node {\footnotesize $w_0$};
\draw [ball color=black] (-2.5,-0.3) circle (0.55mm);
\draw (-2.5,-.3)--(-1.93,-0.25);
\draw [ball color=black] (-1.93,-0.25) circle (0.55mm);
\draw (-1.95,-0.5) node {\footnotesize $v_1$};
\draw (-1.5,0) circle (0.5);
\draw (-1,0)--(0,0.5);
\draw [ball color=black] (-1,0) circle (0.55mm);
\draw (-0.85,0.3) node {\footnotesize $w_1$};
\draw (0.7,0.5) circle (0.7);
\draw (1.4,0.5)--(2,0.3);
\draw [ball color=black] (1.4,0.5) circle (0.55mm);
\draw [ball color=black] (0,0.5) circle (0.55mm);
\draw (-0.2,0.75) node {\footnotesize $v_2$};
\draw (2.6,0.3) circle (0.6);
\draw (3.2,0.3)--(3.87,0.6);
\draw [ball color=black] (2,0.3) circle (0.55mm);
\draw [ball color=black] (3.2,0.3) circle (0.55mm);
\draw [ball color=black] (3.87,0.6) circle (0.55mm);
\draw (4.5,0.3) circle (0.7);
\draw (5.16,0.5)--(5.9,0);
\draw (6.4,0) circle (0.5);
\draw [ball color=black] (5.16,0.5) circle (0.55mm);
\draw (5.48,0.74) node {\footnotesize $w_{g-1}$};
\draw [ball color=black] (5.9,0) circle (0.55mm);
\draw [ball color=black] (6.9,0) circle (0.55mm);
\draw (5.7,-.2) node {\footnotesize $v_g$};
\draw (7.15,-.2) node {\footnotesize $w_g$};
\draw (6.9,0)--(7.5,0.2);
\draw [ball color=black] (7.5,0.2) circle (0.55mm);
\draw (7.94,0.1) node {\footnotesize $v_{g+1}$};

\draw [<->] (3.35,0.475)--(3.8,0.7);
\draw [<->] (3.3,0.4) arc[radius = 0.715, start angle=10, end angle=170];
\draw [<->] (3.3,0.2) arc[radius = 0.715, start angle=-9, end angle=-173];

\draw (3.5,0.725) node {\footnotesize$n_k$};
\draw (2.5,1.25) node {\footnotesize$\ell_k$};
\draw (2.75,-0.7) node {\footnotesize$m_k$};
\end{tikzpicture}
\caption{The graph $\Gamma$.}
\label{Fig:TheGraph}
\end{figure}
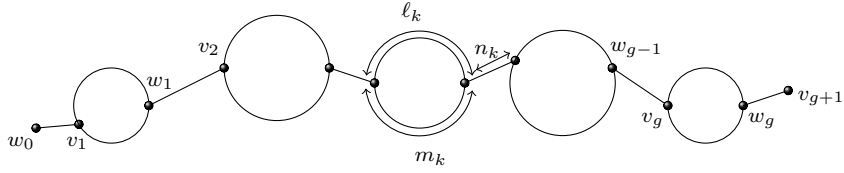
For $1 \leq k \leq g+1$ there is a bridge connecting $w_k$ and $v_{k+1}$, which we refer to as the $k$th bridge $\beta_k$, of length $n_k$.  Throughout, we assume that $\Gamma$ has admissible edge lengths in the following sense, which is stronger than the genericity conditions in \cite{tropicalBN, tropicalGP}.

\begin{definition} \label{Def:Admissible}
The graph $\Gamma$ has \emph{admissible edge lengths} if
$$4g m_k < \ell_k \ll \min \{ n_{k-1}, n_k\} \mbox{ for all $k$},$$ and there are no nontrivial linear relations $c_1 m_1 + \cdots + c_g m_g = 0$ with integer coefficients of absolute value at most $g + 1$.
\end{definition}

\begin{remark}
The inequality $4gm_k < \ell_k$ is required to ensure that the shapes of the functions $\psi_i$ and $\psi_{ij}$ are as described in \S\ref{Sec:Permissible}-\ref{Sec:Excess}.  Both inequalities are used in the proof of Lemma~\ref{Lem:LongBridges}, and the required upper bound on $\ell_k$ depends on the size of the multisets.  For multisets of size $m$, we assume $2m \ell_k < \min \{ n_{k-1}, n_k \}$.  In particular, for Theorem~\ref{thm:mainthm}, the inequality $4 \ell_k <  \min \{ n_{k-1}, n_k \}$ would suffice.  The condition on integer linear relations is used in the proof of Proposition~\ref{Prop:ConsecutiveBridges}.
\end{remark}

The special divisor classes on a chain of loops, i.e. the classes of effective divisors $D$ such that $r(D) > \deg(D) - g$, are explicitly classified in \cite{tropicalBN}.  Every effective divisor on $\Gamma$ is equivalent to an effective $w_0$-reduced divisor $D_0$, which has $d_0$ chips at the vertex $w_0$, together with at most one chip on every loop.  We may therefore associate to each equivalence class the data $(d_0 , x_1 , x_2 , \ldots x_g )$, where $x_i \in \mathbb{R} / (\ell_i + m_i ) \mathbb{Z}$ is the distance from $v_i$ to the chip on the $i$th loop in the counterclockwise direction, if such a chip exists, and $x_i = 0$ otherwise.  The associated lingering lattice path in $\ZZ^r$, whose coordinates we number from $0$ to $r-1$, is a sequence of points $p_0, \ldots, p_g$ starting at
\[
p_0 = (d_0, d_0 - 1, \ldots, d_0 - r + 1),
\]
 with the $i$th step given by
\begin{displaymath}
p_i - p_{i-1} = \left\{ \begin{array}{ll}
(-1,-1, \ldots , -1) & \textrm{if $x_i = 0$},\\
e_j & \textrm{if $x_i = (p_{i-1}(j) +1)m_i$ mod $\ell_i + m_i$} \\
  & \textrm{and both $p_{i-1}$ and $p_{i-1} + e_j$ are in $\mathcal{C}$},\\
0 & \textrm{otherwise},
\end{array} \right.
\end{displaymath}
where $e_0 , \ldots e_{r-1}$ are the basis vectors in $\mathbb{Z}^r$ and $\mathcal{C}$ is the open Weyl chamber
$$ \mathcal{C} = \{ y \in \mathbb{Z}^r \ \vert \ y_0 > \cdots > y_{r-1} > 0 \}. $$
By \cite[Theorem 4.6]{tropicalBN}, a divisor $D$ on $\Gamma$ has rank at least $r$ if and only if the associated lingering lattice path lies entirely in the open Weyl chamber $\mathcal{C}$.

The steps in the direction 0 are referred to as \textit{lingering steps}, and the number of lingering steps cannot exceed the Brill--Noether number $\rho (g,r,d)$.  In the case where $\rho (g,r,d) = 0$, such lattice paths are in bijection with rectangular tableaux of size $(r+1) \times (g-d+r)$.  This bijection is given as follows.  We label the columns of the tableau from $0$ to $r$ and place $i$ in the $j$th column when the $i$th step is in the direction $e_j$, and we place $i$ in the last column when the $i$th step is in the direction $(-1,\ldots,-1)$.

An open dense subset of the special divisor classes of degree $d$ and rank $r$ on $\Gamma$ are \emph{vertex avoiding}, in the sense of \cite[Definition~2.3]{LiftingDivisors}, which means that
\begin{itemize}
\item  the associated lingering lattice path has exactly $\rho(g,r,d)$ lingering steps,
\item  for any $i$, $x_i \neq m_i$ mod $(\ell_i + m_i)$, and
\item  for any $i$ and $j$, $x_i \neq (p_{i-1}(j))m_i$ mod $(\ell_i + m_i)$.
\end{itemize}
Vertex avoiding classes come with a useful collection of canonical representatives.  If $D$ is a divisor of rank $r$ on $\Gamma$ whose class is vertex avoiding, then there is a unique effective divisor $D_i \sim D$ such that $\deg_{w_0}(D_i) = i$ and $\deg_{v_{g+1}}(D_i) = r-i$.  Equivalently, $D_i$ is the unique divisor equivalent to $D$ such that $D_i - iw_0 - (r-i)v_{g+1}$ is effective.   Furthermore,
\begin{itemize}
\item  the divisor $D_i$ has no points on any of the bridges,
\item  for $i<r$, the divisor $D_i$ fails to have a point on the $j$th loop if and only if the $j$th step of the associated lingering lattice path is in the direction $e_i$,
\item  the divisor $D_r$ fails to have a point on the $j$th loop if and only if the $j$th step of the associated lingering lattice path is in the direction $(-1, \ldots , -1)$.
\end{itemize}

\begin{notation}
Throughout, we let $X$ be a smooth projective curve of genus $g$ whose analytification has skeleton $\Gamma$.  For the remainder of the paper, we let $D$ be a $w_0$-reduced divisor on $\Gamma$ of degree $d$ and rank $r$ whose class is vertex avoiding, $D_X$ a lift of $D$ to $X$, and $\psi_i$ a piecewise linear function on $\Gamma$ such that $D + \ddiv (\psi_i) = D_i$. By a \emph{lift} of $D$ to $X$, we mean that $D_X$ is a divisor of degree $d$ and rank $r$ on $X$ whose tropicalization is $D$.
\end{notation}

Note that $\psi_i$ is uniquely determined up to an additive constant, and for $i<r$ the slope of $\psi_i$ along the bridge $\beta_j$ is $p_j(i)$.   In this context, being $w_0$-reduced means that $D = D_r$, so the function $\psi_r$ is constant.  In particular, the functions $\psi_0, \ldots, \psi_r$ have distinct slopes along bridges, so $\{ \psi_0, \ldots, \psi_r \}$ is tropically independent.  For convenience, we set $p_j(r) = 0$ for all $j$.  

\medskip

\begin{proposition} \label{Prop:LiftFunctions}
There is a rational function $f_i \in \cL(D_X)$ such that $\trop(f_i) = \psi_i$.
\end{proposition}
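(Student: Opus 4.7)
The plan is to leverage the uniqueness clause in the characterization of vertex avoiding classes, together with the nonarchimedean Poincar\'e--Lelong formula, to pin down the tropicalization of a carefully chosen section of $\cL(D_X)$.

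First, I would choose points $p_0, p_\infty \in X(K)$ that retract under the specialization map $\tau : X^{\an} \to \Gamma$ to $w_0$ and $v_{g+1}$ respectively. Since $w_0$ and $v_{g+1}$ correspond to components of the special fiber of a semistable model of $X$, such type-$1$ points exist. Because $D_X$ has rank $r$, the divisor $D_X - i p_0 - (r-i) p_\infty$ is linearly equivalent to an effective divisor, so we may pick any nonzero $f_i \in \cL(D_X - i p_0 - (r-i) p_\infty) \subset \cL(D_X)$.

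Next, I would apply the compatibility of tropicalization with divisors of rational functions. Set $E = \ddiv(f_i) + D_X$; this is effective and $E \geq i p_0 + (r-i) p_\infty$. Pushing forward along $\tau$ gives an effective divisor $\tau_* E$ on $\Gamma$, and by the nonarchimedean Poincar\'e--Lelong formula this equals $\ddiv(\trop(f_i)) + D$. Moreover, $\tau_* E \geq i w_0 + (r-i) v_{g+1}$ because $\tau(p_0) = w_0$ and $\tau(p_\infty) = v_{g+1}$. Thus $\ddiv(\trop(f_i)) + D$ is an effective divisor equivalent to $D$ satisfying $\deg_{w_0} \geq i$ and $\deg_{v_{g+1}} \geq r-i$; since the total degree is $d$, comparing degrees forces equality in both inequalities. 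By the uniqueness property of vertex avoiding classes recalled just before the proposition, $\ddiv(\trop(f_i)) + D = D_i$, and hence $\ddiv(\trop(f_i)) = \ddiv(\psi_i)$.

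A piecewise linear function with trivial divisor on the connected graph $\Gamma$ is constant, so $\trop(f_i) = \psi_i + c$ for some $c \in \RR$. Finally, since our ground field has value group equal to $\RR$, we may choose $a \in K^*$ with $\val(a) = c$ and replace $f_i$ by $a^{-1} f_i$, obtaining $\trop(f_i) = \psi_i$ exactly, as required. The only delicate step is the degree-counting argument that upgrades the inequality $\tau_* E \geq i w_0 + (r-i) v_{g+1}$ to the identification $\tau_* E = D_i$; once that is in place, the uniqueness built into the vertex avoiding hypothesis does all the remaining work.
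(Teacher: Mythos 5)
Your overall approach is the right one and is essentially what the paper (deferring to the proof of Proposition~6.5 of the Gieseker--Petri paper) does: lift the vertices $w_0$ and $v_{g+1}$ to points of $X$, use the rank of $D_X$ to find a section vanishing to the prescribed orders, push the resulting effective divisor forward to $\Gamma$ via the Poincar\'e--Lelong/specialization formula, and invoke the uniqueness built into the vertex-avoiding hypothesis. The same template appears verbatim in the paper's own Proposition~\ref{Prop:CanonicalSections}.

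However, the step you yourself flag as ``the only delicate step'' is incorrectly argued. From $\deg_{w_0}(\tau_*E) \geq i$, $\deg_{v_{g+1}}(\tau_*E) \geq r-i$, and $\deg(\tau_*E) = d$, you cannot conclude equality: since $i + (r-i) = r$ is typically much less than $d$, nothing prevents extra mass accumulating at $w_0$ or $v_{g+1}$ while less sits elsewhere. The degree count simply does not close the gap. Fortunately, it also does not need to. The characterization of $D_i$ quoted just before the proposition gives two equivalent descriptions, and the second one --- $D_i$ is the \emph{unique} effective divisor equivalent to $D$ for which $D_i - iw_0 - (r-i)v_{g+1}$ is effective --- applies directly to the inequality you have already established. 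So you should delete the degree-comparison sentence and instead cite that formulation of uniqueness; the rest of your argument (including the final normalization by $a^{-1}$ using surjectivity of the valuation) then goes through without change.
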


\begin{proof}
The proof is identical to the proof of \cite[Proposition~6.5]{tropicalGP}, which is the special case where $\rho(g,r,d) = 0$.  \end{proof}

\noindent When $\rho(g,r,d) = 0$, all divisor classes of degree $d$ and rank $r$ are vertex avoiding.  Note that, since $\{ \psi_0, \ldots, \psi_r \}$ is tropically independent of size $r+1$, the set of rational functions $\{ f_0, \ldots, f_r \}$ is a basis for $\cL(D_X)$.

For a multiset $I \subset \{ 0, \ldots, r \}$ of size $m$, let $D_I = \sum_{i \in I} D_i$ and let $\psi_I$ be a piecewise linear function such that $mD + \ddiv \psi_I = D_I$.    By construction, the function $\psi_I$ is in $R(mD)$ and agrees with $\sum_{i \in I} \psi_i$ up to an additive constant.

\begin{conjecture} \label{Conj:TropicalMRC}
Suppose $r \geq 3$, $\rho(g,r,d) \geq 0$, and $d < g + r$.  Then there is a divisor $D$ of rank $r$ and degree $d$ whose class is vertex avoiding on a chain of loops $\Gamma$ with generic edge lengths, and a tropically independent subset $\cA \subset \{ \psi_I \ | \ \# I = m \}$ of size
\[
\# \cA = \min \left \{ {r + m \choose m}, \ md - g + 1 \right \}.
\]
\end{conjecture}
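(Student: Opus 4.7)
The plan is to use Proposition~\ref{Prop:LiftFunctions} to lift each $\psi_i$ to a rational function $f_i \in \cL(D_X)$, so that each $\psi_I$ is the tropicalization of $\prod_{i \in I} f_i \in \cL(mD_X)$. By Lemma~\ref{Lem:MultiplicationMaps}, producing a tropically independent subset $\cA$ of the stated size automatically gives the required rank bound. I would begin by fixing a specific vertex avoiding divisor $D$, chosen so that its associated lingering lattice path has a particularly rigid combinatorial structure --- for example, when $\rho = 0$ the path corresponding to a canonical rectangular tableau of shape $(r+1)\times(g-d+r)$ --- and then describe the shape of each $\psi_i$ on each loop. Away from at most two short arcs per loop, $\psi_i$ is determined by its constant slopes on the bridges, which are exactly the lattice path coordinates $p_j(i)$, and this propagates to an explicit description of $\psi_I$ on each bridge and loop.

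With these shapes in hand, one argues by contradiction: if some such $\cA$ is tropically dependent, then after shifting each $\psi_I$ by a constant $b_I$ the pointwise minimum $\theta = \min_{I \in \cA}(\psi_I + b_I)$ lies in $R(mD)$ and its minimum is attained at least twice at every point. The Shape Lemma for Minima then says that the closed locus $\Gamma_I$ where $\theta$ agrees with $\psi_I + b_I$ has boundary which, together with $D_I$, supports the effective divisor $\ddiv(\theta) + mD$. The bulk of the argument is a loop-by-loop analysis: on the interior of bridge $\beta_j$, the slope of $\psi_I$ is $\sum_{i \in I} p_j(i)$, so distinct multisets typically give distinct bridge slopes, forcing $\theta$ to agree with exactly one $\psi_I + b_I$ along each bridge. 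Transitioning through each loop determines how the $b_I$ must shift, and the admissibility conditions --- particularly the nonexistence of small integer linear relations among $m_1, \ldots, m_g$ --- convert these transition equations into rigid constraints that one aims to violate.

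The main obstacle is the combinatorial identification of the right set $\cA$, especially in the nonspecial regime $md - g + 1 < \binom{r+m}{m}$, where $\cL(mD_X)$ is strictly smaller than $\Sym^m \cL(D_X)$. Here one must exclude precisely $\binom{r+m}{m} - (md - g + 1)$ multisets, and the excluded ones must correspond to a distinguished basis for $\ker \mu_m$; on the tropical side this means identifying those $\psi_I$ that are already forced into the tropical module generated by the others via coincidences on individual loops. For $m = 2$, the paper handles this by a delicate case analysis of excess contributions loop by loop; for general $m$, the analogous analysis must track simultaneous $m$-fold interactions, and proving that the excluded multisets assemble into a single globally consistent set is the heart of the difficulty.

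My proposed attack is therefore inductive along the chain: after processing the first $k$ loops one maintains a candidate set $\cA_k$ of multisets together with a proof of tropical independence on the subgraph comprising those loops and their incident bridges. At the $k$-th step, one adds new multisets contributed by the next loop (generically $m$ new directions, or $\binom{r+m}{m}$ worth of fresh room if a lingering step occurs) while excising those rendered dependent by a new local coincidence. The hard part will be designing this bookkeeping so that, after all $g$ loops are traversed, $|\cA|$ reaches exactly $\min\{\binom{r+m}{m},\, md - g + 1\}$; this reduces to a nontrivial additivity statement for the excess contributions across loops, together with a proof that the exclusion rule chosen at each loop remains coherent with the admissibility constraints on $m_k$ and $\ell_k$.
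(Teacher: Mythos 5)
First, note that the statement you are attempting to prove is a \emph{conjecture}: the paper explicitly does not establish Conjecture~\ref{Conj:TropicalMRC} for all $m$; it is proved only for $m = 2$ (the content of \S\ref{Sec:Excess}--\ref{sec:rhopos}) and, separately, whenever $md < 2g + 4$ (Theorem~\ref{Thm:LowDegree}). Your proposal is a plan for general $m$ and never specializes to $m = 2$, so you should at minimum flag that what one can actually hope to carry out is the $m = 2$ instance. That said, your high-level framing --- fix a vertex avoiding $D$ (the rectangular tableau when $\rho = 0$), lift via Proposition~\ref{Prop:LiftFunctions}, argue by contradiction with the Shape Lemma, and convert the hypothesized dependence into a small-integer relation among the $m_k$ that violates admissibility --- does match the paper's strategy for $m = 2$.

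There are two concrete problems. First, the claim that ``distinct multisets typically give distinct bridge slopes, forcing $\theta$ to agree with exactly one $\psi_I + b_I$ along each bridge'' is wrong, and it is exactly the opposite of the difficulty the paper must overcome. For the rectangular-tableau divisor, the bridge slope of $\psi_{ij}$ on $\Gamma_{[\ell s + 1,(\ell+1)s]}$ depends, for the $\delta$-permissible pairs with $i < \ell < j$, only on $i + j$; all pairs with $i + j = \ell + \rr$ share the same bridge slope $\sigma$. The whole point of Proposition~\ref{Prop:ConsecutiveBridges} is to rule out a dependence among these slope-coincident functions by analyzing shapes on the top edges and extracting an integer relation among the $m_k$; if bridge slopes were distinct the conjecture would be almost immediate. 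Second, your iterative construction of $\cA_k$ (maintaining tropical independence on a growing subgraph and excising multisets as you go) is not how the paper argues and is not obviously sound: tropical independence is not a local property, and a set of functions tropically independent on $\Gamma_{[0,k]}$ need not remain so on all of $\Gamma$ after adjoining loops and multisets. The paper instead fixes $\cA$ \emph{up front} by an explicit combinatorial recipe (integer points in the triangle $0 \le i \le j \le r$ with two half-open triangles and a chevron removed; Figure~\ref{Fig:A}), then assumes a dependence, and inductively bounds the excess degree function $e(k) = \delta_0 + \cdots + \delta_k - 2k$ from left to right using Lemma~\ref{Lem:Counting} and Proposition~\ref{Prop:ConsecutiveBridges} until $\deg(\Delta) > 2d$. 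Without an analogue of Proposition~\ref{Prop:ConsecutiveBridges}, and with the erroneous distinct-slope premise, your plan has no engine to generate the final contradiction.
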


\noindent The conjecture is trivial for $r = 0$ and easy for $r = 1$, since the functions $k \psi_0$ have distinct nonzero slopes on every bridge and hence $\{0, \psi_0, \ldots, m \psi_0 \}$ is tropically independent.  Yet another easy case is $m = 1$, since $\{ \psi_0, \ldots \psi_r \}$ is tropically independent.   In the remainder of the paper we prove the conjecture for $m = 2$ and for $md < 2g + 4$.

\begin{proposition}
For any fixed $g$, $r$, $d$, and $m$, the maximal rank conjecture follows from Conjecture~\ref{Conj:TropicalMRC}.
\end{proposition}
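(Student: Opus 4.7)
The plan is to assemble the tools already available: the preliminary reduction of Section~\ref{sec:prelim}, the tropical independence criterion in Lemma~\ref{Lem:MultiplicationMaps}, and the conclusion of Conjecture~\ref{Conj:TropicalMRC}. Fix $g$, $r$, $d$, $m$; as recalled in Section~\ref{sec:prelim}, the MRC for nonspecial linear series is already known, so the interesting range is $r \geq 3$, $\rho(g,r,d) \geq 0$, and $d < g+r$---precisely the hypothesis of Conjecture~\ref{Conj:TropicalMRC}. By the upper semicontinuity argument recalled there, it suffices to exhibit a single Brill--Noether--Petri general curve $X$ of genus $g$ (over an algebraically closed field of the relevant characteristic) and a complete linear series $\cL(D_X)$ of degree $d$ and rank $r$ at which the rank of $\mu_m$ matches the expected value $\min\{\binom{r+m}{m},\,md-g+1\}$.

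Granted the conjecture, I would extract a chain of loops $\Gamma$ with admissible edge lengths, a vertex avoiding divisor $D$ of rank $r$ and degree $d$, and a tropically independent set $\cA \subset \{\psi_I : \#I = m\}$ of the optimal cardinality $\min\{\binom{r+m}{m},\,md - g + 1\}$. I then realize $\Gamma$ as the skeleton of some smooth projective $X$ over a sufficiently large nonarchimedean field, which forces $X$ to be Brill--Noether--Petri general by \cite{tropicalGP}; apply the lifting theorem of \cite{LiftingDivisors} to lift $D$ to a divisor $D_X$ of rank $r$ on $X$; and invoke Proposition~\ref{Prop:LiftFunctions} to produce rational functions $f_0,\ldots,f_r \in \cL(D_X)$ with $\trop(f_i) = \psi_i$.

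For each multiset $I$ of size $m$ represented in $\cA$, the product $\prod_{i \in I} f_i$ lies in the image of $\mu_m$ and tropicalizes to $\sum_{i \in I}\psi_i$, which coincides with $\psi_I$ up to an additive constant. Applying Lemma~\ref{Lem:MultiplicationMaps} therefore yields $\mathrm{rk}(\mu_m) \geq \#\cA = \min\{\binom{r+m}{m},\,md-g+1\}$ at our specific class $[D_X]$. Since the rank of $\mu_m$ is lower semicontinuous on $\cG^r_d(X)$, the same bound persists at a general class in $\cG^r_d(X)$; at a general class Remark~\ref{Rem:Nonspecial} (applied to any basepoint free representative of $|D_X|$, which exists whenever $r \geq 1$) gives $\dim \cL(mD_X) = md - g + 1$, pinning the rank down to exactly the expected minimum and yielding maximal rank.

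I do not foresee any genuine obstacle here: the only step requiring any thought beyond direct quotation is the verification of nonspeciality of $mD_X$ at a general class, which is routine once $r \geq 1$. The entire mathematical content of the proposition is absorbed into Conjecture~\ref{Conj:TropicalMRC}, and the argument above is essentially bookkeeping that packages the conjecture into the form demanded by the maximal rank conjecture.
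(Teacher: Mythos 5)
Your proof is correct and follows essentially the same route as the paper's: realize $\Gamma$ as a skeleton (forcing Brill--Noether--Petri generality), lift $D$ via \cite{LiftingDivisors} and the functions via Proposition~\ref{Prop:LiftFunctions}, invoke Lemma~\ref{Lem:MultiplicationMaps} to bound the rank of $\mu_m$ below by $\#\cA$, and use nonspeciality of $mD_X$ together with the reduction of \S\ref{sec:prelim} to conclude maximal rank. The one cosmetic difference is your detour through lower semicontinuity to a general class of $\cG^r_d(X)$; the paper instead applies Remark~\ref{Rem:Nonspecial} directly at the specific lift $D_X$, which is slightly more economical but amounts to the same thing.
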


\begin{proof}
Choose a smooth projective curve $X$ over a nonarchimedean field whose skeleton is $\Gamma$.  Then $X$ is Brill--Noether--Petri general \cite{tropicalGP} and $D$ lifts to a divisor $D_X$ of degree $d$ and rank $r$ on $X$ \cite{LiftingDivisors}.  We may assume $r \geq 1$, and it follows that $mD_X$ is nonspecial for $m \geq 2$ by Remark~\ref{Rem:Nonspecial}.  By Lemma~\ref{Lem:MultiplicationMaps}, the rank of $\mu_m$ is at least as large as any set $\cA$ such that $\{ \psi_I \ | \ I \in \cA \}$ is tropically independent.  Therefore, Conjecture~\ref{Conj:TropicalMRC} implies that $\mu_m : \Sym^m \cL(D_X) \rightarrow \cL(mD_X)$ has maximal rank and, as discussed in \S \ref{sec:prelim}, the maximal rank conjecture for $g$, $r$, $d$, and $m$ follows.
\end{proof}

\section{Two points on each loop}

In this section, we show that any nontrivial tropical dependence among the piecewise linear functions $\psi_I$, for multisets $I$ of size $m$, gives rise to a divisor equivalent to $mD$ with degree at least 2 at $w_0$, degree at least 2 at $v_{g+1}$, and degree at least 2 on each loop.  As a consequence, we deduce Theorem~\ref{Thm:LowDegree}, which confirms Conjecture~\ref{Conj:TropicalMRC} and the maximal rank conjecture for $md < 2g + 4$.

We begin with the following observation.

\begin{lemma}
\label{Lem:FunctionsDisagree}
Let $I$ and $J$ be distinct multisets of size $m$.  Then, for each loop $\gamma^\circ$ in $\Gamma$, the restrictions $D_I |_{\gamma^\circ}$ and $D_J|_{\gamma^\circ}$ are distinct.
\end{lemma}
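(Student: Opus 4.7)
The plan is to show that, for each loop $\gamma^\circ$ of $\Gamma$, the assignment $I \mapsto D_I|_{\gamma^\circ}$ is injective on multisets of size $m$; the lemma is then immediate. Fix a loop $\gamma^\circ$, say the $k$-th loop.

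First I would recall from \S\ref{Section:TheGraph} the structure of the canonical representatives $D_0, \ldots, D_r$ of a vertex-avoiding class. Each $D_i$ has no chips on the bridges and carries at most one chip on $\gamma^\circ$. Moreover, there is at most one distinguished index $i_k^\ast \in \{0, \ldots, r\}$ for which $D_{i_k^\ast}$ has no chip on $\gamma^\circ$: namely $i_k^\ast = j$ if the $k$-th step of the associated lingering lattice path is in direction $e_j$ with $j < r$; $i_k^\ast = r$ if the step is $(-1,\ldots,-1)$; and no such index exists if the step is lingering.

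Next I would verify that, for $i \neq i_k^\ast$, the chip positions $q_k^i$ of $D_i$ on $\gamma^\circ$ are pairwise distinct and lie in the interior of the loop, away from $v_k$ and $w_k$. This is the main technical content. It follows from the vertex-avoiding conditions in \S\ref{Section:TheGraph}: the requirements $x_k \neq m_k$ and $x_k \neq p_{k-1}(j)\, m_k$ modulo $\ell_k + m_k$ for all $j$, combined with the rule describing how the step direction determines $x_k$, force the chip positions (given by distinct residues of the form $(p_{k-1}(i)+1)\, m_k$ modulo $\ell_k + m_k$) to be mutually incongruent and to avoid $v_k$ and $w_k$.

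Granting this distinctness, the coefficient of $q_k^i$ in $D_I|_{\gamma^\circ}$ equals the multiplicity of $i$ in $I$ for every $i \neq i_k^\ast$, so those multiplicities are read off directly. The remaining multiplicity, that of $i_k^\ast$ in $I$, cannot be observed on $\gamma^\circ$, but is pinned down by the size constraint $\# I = m$: it is $m$ minus the sum of the other multiplicities. Hence $I$ is recoverable from the pair $(\# I,\, D_I|_{\gamma^\circ})$, so distinct multisets $I$ and $J$ of the same size $m$ yield distinct restrictions. The only real obstacle is the distinctness of the $q_k^i$, a short bookkeeping inside the vertex-avoiding hypotheses; the rest is a one-line counting argument using $\# I = m$.
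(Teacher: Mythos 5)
Your argument is correct and follows essentially the same route as the paper's proof: identify the candidate chip positions $q_i$ on the loop, observe that $\deg_{q_i}(D_I|_{\gamma^\circ})$ reads off the multiplicity of $i$ in $I$ for all $i$ except the at-most-one index whose $D_i$ has no chip there, and conclude. You make explicit two points the paper leaves implicit — that the $q_i$ are pairwise distinct and avoid $v_k, w_k$ (a consequence of the vertex-avoiding conditions, though the exact residue formula you quote is the step-direction criterion $x_k = (p_{k-1}(j)+1)m_k$ rather than the chip-position formula $x_k - p_{k-1}(i)m_k$), and that $\#I = m$ is what pins down the one multiplicity not visible on the loop — but the underlying reasoning is the same.
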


\begin{proof}
Suppose $\gamma^{\circ}$ is the $j$th loop.  Let $q_i$ be the point on $\gamma^{\circ}$ whose distance from $v_j$ in the counterclockwise direction is $x_j - p_{j-1}(i) m_j$.  Then the degree of $q_i$ in $D_I$ is equal to the multiplicity of $i$ in the multiset $I$, unless the $j$th step of the lingering lattice path is in the direction $e_i$, in which case the degree of $q_i$ in $D_I$ is zero.  It follows that the multiset $I$ can be recovered from the restriction $D_I \vert_{\gamma^{\circ}}$.
\end{proof}

Let $\theta$ be the piecewise linear function
\[
\theta = \min_I \{ \psi_I \},
\]
which is in $R(mD)$, and let $\Delta$ be the corresponding effective divisor
\[
\Delta = mD + \ddiv \theta.
\]
By Lemma~\ref{Lem:FunctionsDisagree}, no two functions $\psi_I$ can agree on an entire loop, so if the minimum occurs everywhere at least twice on a loop, then there are at least three functions $\psi_I$ that achieve the minimum at some point of the loop.  We will study $\theta$ and $\Delta$ by systematically using observations like this one, examining behavior on each piece of $\Gamma$ and controlling which functions $\psi_I$ can achieve the minimum at some point in each loop.

Recall that, for $0 \leq k \leq g$, the $k$th bridge $\beta_k$ connects $w_k$ to $v_{k+1}$.  Let $u_k$ be the midpoint of $\beta_{k-1}$.  We then decompose $\Gamma$ into $g+2$ locally closed subgraphs $\gamma_0, \ldots, \gamma_{g+1}$, as follows.  The subgraph $\gamma_0$ is the half-open interval $[w_0, u_1)$.  For $1 \leq i \leq g$, the subgraph $\gamma_i$, which includes the $i$th loop of $\Gamma$, is the union of the two half-open intervals $[u_i, u_{i+1})$, which contain the top and bottom edges of the $i$th loop, respectively.  Finally, the subgraph $\gamma_{g+1}$ is the closed interval $[u_{g+1}, v_{g+1}]$.  We further write $\gamma_i^\circ$ for the $i$th embedded loop in $\Gamma$, which is a closed subset of $\gamma_i$, for $1 \leq i \leq g$.  The decomposition
\[
\Gamma = \gamma_0 \sqcup \cdots \sqcup \gamma_{g+1}
\]
is illustrated by Figure \ref{Fig:Decomposition}.

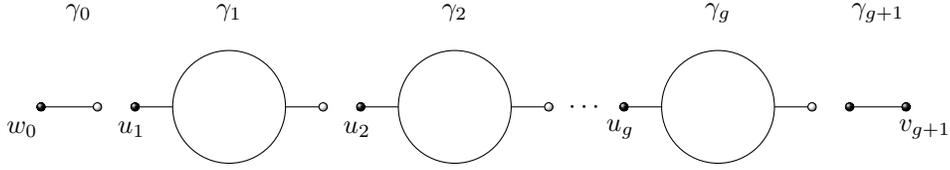
\begin{figure}[h!]
\begin{tikzpicture}
\matrix[column sep=0.5cm] {

\begin{scope}[grow=right, baseline]

\draw (-6.75, -.3) node {$w_0$};
\draw [ball color=black] (-6.5,0) circle (0.55mm);
\draw (-6.5,0)--(-5.75,0);
\draw (-6,1.25) node {$\gamma_0$};
\draw [ball color=white] (-5.75,0) circle (0.55mm);

\draw (-5.3, -.3) node {$u_1$};
\draw [ball color=black] (-5.25,0) circle (0.55mm);
\draw (-5.25,0)--(-4.75,0);
\draw (-4,0) circle (.75);
\draw (-4,1.25) node {$\gamma_1$};
\draw (-3.25,0)--(-2.75,0);
\draw [ball color=white] (-2.75,0) circle (0.55mm);

\draw [ball color=black] (-2.25,0) circle (0.55mm);
\draw (-2.3, -.3) node {$u_2$};
\draw (-2.25,0)--(-1.75,0);
\draw (-1,0) circle (.75);
\draw (-1,1.25) node {$\gamma_2$};
\draw (-.25,0)--(.25,0);
\draw [ball color=white] (.25,0) circle (0.55mm);

\draw(.75, 0) node {$\cdots$};

\draw [ball color=black] (1.25,0) circle (0.55mm);
\draw (1.2, -.3) node {$u_g$};
\draw (1.25,0)--(1.75,0);
\draw (2.5,0) circle (.75);
\draw (2.5,1.25) node {$\gamma_g$};
\draw (3.25,0)--(3.75,0);
\draw [ball color=white] (3.75,0) circle (0.55mm);

\draw [ball color=black] (4.25,0) circle (0.55mm);
\draw (4.25, 0)--(5,0);
\draw (4.625,1.25) node {$\gamma_{g+1}$};
\draw [ball color=black] (5,0) circle (0.55mm);
\draw (5.25, -.3) node {$v_{g+1}$};

\end{scope}

\\};
\end{tikzpicture}
\caption{Decomposition of the graph $\Gamma$ into locally closed pieces $\{\gamma_k\}$.}
\label{Fig:Decomposition}
\end{figure}

\begin{proposition}
\label{Prop:TwoChipsPerLoop}
Suppose the minimum of $\{ \psi_I (v) \}_I$ occurs at least twice at every point $v$ in $\Gamma$.  Then $\deg_{w_0}(\Delta)$, $\deg_{v_{g+1}}(\Delta)$, and $\deg(\Delta|_{\gamma_i^\circ})$ are all at least $2$.
\end{proposition}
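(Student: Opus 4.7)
I would establish the three bounds separately. The arguments at the two leaves rely on a local slope computation near a valence-one vertex, while the loop argument combines Lemma~\ref{Lem:FunctionsDisagree} with the Shape Lemma and a short topological observation on a circle.

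For the leaves $w_0$ and $v_{g+1}$, both of valence one, the degree of $\Delta$ at the vertex equals $\deg_v(mD)$ plus the outgoing slope $s_v$ of $\theta$ along the unique adjacent bridge, and $s_v$ is the minimum of the outgoing slopes of $\psi_I$ over multisets $I$ active at $v$. These outgoing slopes are $\sum_{i \in I}(r-i)$ at $v_{g+1}$ and $\sum_{i \in I}(i-r)$ at $w_0$ (using $d_0 = r$, since $D = D_r$). The hypothesis has an immediate local consequence: the minimum outgoing slope must be attained by at least two active multisets, because otherwise a point just inside the adjacent bridge would have only one multiset active, violating the hypothesis there. Two distinct size-$m$ multisets from $\{0,\ldots,r\}$ share an outgoing slope exactly when their sums $\sum_{i \in I} i$ agree; a direct count shows that the smallest value of $\sum_{i \in I} i$ admitting two distinct multisets is $2$ (realized by $\{2,0,\ldots,0\}$ and $\{1,1,0,\ldots,0\}$), and by symmetry the largest is $mr-2$. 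These combinatorial bounds give $s_{v_{g+1}} \geq 2$ directly and $s_{w_0} \geq 2 - mr$, hence $\deg_{v_{g+1}}(\Delta) \geq 2$ and $\deg_{w_0}(\Delta) \geq mr + s_{w_0} \geq 2$.

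For the loop bound on $\gamma_i^\circ$, I would start from Lemma~\ref{Lem:FunctionsDisagree}: if only two distinct multisets $I_1, I_2$ attained $\theta$ at any point of the loop, the hypothesis (together with the fact that two functions cannot both be active at every point without agreeing as functions) would force $\psi_{I_1}$ and $\psi_{I_2}$ to agree on the entire loop, hence $D_{I_1}|_{\gamma_i^\circ} = D_{I_2}|_{\gamma_i^\circ}$, contradicting the lemma. So at least three distinct multisets attain the minimum on $\gamma_i^\circ$. By the Shape Lemma, every boundary point of an active region $\Gamma_I$ in $\Gamma$ lies in the support of $\Delta$, so each such point on the loop contributes at least $1$ to $\deg(\Delta|_{\gamma_i^\circ})$. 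To finish, I observe that any closed, nonempty, proper subset of a circle has boundary of even positive cardinality, hence at least two points, and among the $\geq 3$ active regions on $\gamma_i^\circ$ at most one can coincide with the entire loop (else two coincide, contradicting Lemma~\ref{Lem:FunctionsDisagree} again). Thus at least two of the active regions are proper closed subsets of $\gamma_i^\circ$, and the union of their boundaries contributes at least two distinct points to the support of $\Delta$ on the loop.

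I expect the main obstacle to be this final topological count: one must handle the possibility that two proper active regions share the same pair of boundary points, in which case the two regions would be complementary arcs and again coincidentally equal (or else both equal to a single arc), which must once more be ruled out via Lemma~\ref{Lem:FunctionsDisagree}. The leaf arguments, while relying on the hypothesis in a somewhat subtle way through the interior of the adjacent bridge, reduce to a concrete combinatorial count once the local slope picture is set up.
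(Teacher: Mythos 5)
Your proposal is correct and follows essentially the same route as the paper: at the leaves you use the uniqueness of the two extremal slopes among the $\psi_I$ to bound the slope of $\theta$ on the adjacent bridge, and on the loops you combine Lemma~\ref{Lem:FunctionsDisagree} with the Shape Lemma. Your loop argument is slightly more elaborate than necessary, though --- you need not establish three active multisets or worry about coinciding boundary pairs; it suffices to observe (as the paper does) that some single active region $\Gamma_I \cap \gamma_i^\circ$ is a proper nonempty closed subset of the circle, and hence already contributes at least two boundary points to $\Delta$.
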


\begin{proof}
Note that exactly one function $\psi_I$ has slope $mr$ on the first bridge; this is the function corresponding to the multiset $I = \{ 0, \ldots, 0 \}$.  Similarly, the only multiset that gives slope $mr -1$ is $\{ 1, 0, \ldots, 0 \}$.  Therefore, if the minimum occurs twice along the first bridge, then the outgoing slope of $\theta$ at $w_0$ is at most $mr -2$, and hence $\deg_{w_0}(\Delta) \geq 2$, as required.  Similarly, we have $\deg_{v_{g+1}}(\Delta) \geq 2$.

It remains to show that $\deg(\Delta|_{\gamma_i^\circ}) \geq 2$ for $1 \leq i \leq g$.  Choose a point $v \in \gamma_i^\circ$. By assumption, there are at least two distinct multisets $I$ and $I'$ such that both $\psi_I$ and $\psi_{I'}$ obtain the minimum on some closed interval containing $v$.  By Lemma~\ref{Lem:FunctionsDisagree}, the functions $\psi_I$ and $\psi_{I'}$ do not agree on all of $\gamma_i^\circ$, so there is another point $v' \in \gamma_i^\circ$ where at least one of these two functions does not obtain the minimum.  Without loss of generality, assume that $\psi_I$ does not obtain the minimum at $v'$.  Then $\psi_I$ obtains the minimum on a proper closed subset of $\gamma_i^\circ$, and since $\gamma_i^\circ$ is a loop, this set has outdegree at least two.  By the shape lemma for minima (see \S\ref{sec:prelim}), it follows that $(\ddiv (\theta) + mD)|_{\gamma_i^\circ}$ has degree at least two.
\end{proof}

As an immediate application of this proposition, we prove Conjecture~\ref{Conj:TropicalMRC} for $md < 2g + 4$.

\begin{theorem}
\label{Thm:LowDegree}
If $md < 2g+4$ then $\{\psi_I \ | \ \# I = m \}$ is tropically independent.
\end{theorem}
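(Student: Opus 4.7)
The plan is to prove the contrapositive by a simple degree count: if $\{\psi_I \mid \#I = m\}$ were tropically dependent, then Proposition~\ref{Prop:TwoChipsPerLoop} would force the auxiliary effective divisor associated to the dependence to have degree at least $2g+4$, contradicting $\deg(mD) = md < 2g + 4$.

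In detail, suppose $\{\psi_I \mid \#I = m\}$ is tropically dependent. By the remark following the definition of tropical independence, after replacing each $\psi_I$ by $\psi_I + b_I$ for suitable real constants $b_I$, we may assume that the minimum of $\{\psi_I(v)\}_I$ is attained at least twice at every point $v \in \Gamma$. Set
\[
\theta = \min_I \{\psi_I\}, \qquad \Delta = mD + \ddiv(\theta).
\]
Since each $\psi_I$ lies in $R(mD)$ and $R(mD)$ is closed under pointwise minima, $\theta \in R(mD)$ and $\Delta$ is an effective divisor of degree $md$.

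Now apply Proposition~\ref{Prop:TwoChipsPerLoop}: we have $\deg_{w_0}(\Delta) \geq 2$, $\deg_{v_{g+1}}(\Delta) \geq 2$, and $\deg(\Delta|_{\gamma_i^\circ}) \geq 2$ for each $i = 1, \ldots, g$. The points $w_0$ and $v_{g+1}$ are disjoint from every loop $\gamma_i^\circ$, and the loops $\gamma_1^\circ, \ldots, \gamma_g^\circ$ are pairwise disjoint, so these contributions sum to give
\[
md = \deg(\Delta) \geq 2 + 2 + 2g = 2g + 4,
\]
contradicting the hypothesis $md < 2g + 4$. Hence $\{\psi_I \mid \#I = m\}$ is tropically independent, as claimed.

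There is no serious obstacle here beyond correctly invoking Proposition~\ref{Prop:TwoChipsPerLoop} and checking that the loci $\{w_0\}$, $\{v_{g+1}\}$, and $\gamma_1^\circ, \ldots, \gamma_g^\circ$ are pairwise disjoint subsets of $\Gamma$, so that their degree contributions to the effective divisor $\Delta$ add. All the technical content has been packaged into the preceding proposition.
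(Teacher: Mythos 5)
Your proof is correct and follows essentially the same route as the paper: both reduce immediately to Proposition~\ref{Prop:TwoChipsPerLoop} and then perform the degree count on $\Delta = mD + \ddiv(\theta)$. The only cosmetic difference is that the paper restates the proposition's conclusion in terms of the $g+2$ locally closed pieces $\gamma_k$ of the decomposition, whereas you add up the contributions of the pairwise disjoint sets $\{w_0\}$, $\{v_{g+1}\}$, and $\gamma_1^\circ, \ldots, \gamma_g^\circ$ directly; both bookkeepings give $\deg(\Delta) \geq 2g+4$.
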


\begin{proof}
Suppose that $\{ \psi_I \}_I$ is tropically dependent.  After adding a constant to each $\psi_I$, we may assume that the minimum $\theta(v) = \min_I \psi_I(v)$ occurs at least twice at every point $v$ in $\Gamma$.  By Proposition \ref{Prop:TwoChipsPerLoop}, the restriction of $\Delta = mD + \ddiv (\theta)$ to each of the $g+2$ locally closed subgraphs $\gamma_k \subset \Gamma$ has degree at least two.  Therefore the degree of $\Delta$ is at least $2g + 4$, and the theorem follows.
\end{proof}

\noindent In particular, the maximal rank conjecture holds for $md < 2g + 4$.  This partially generalizes the case where $m =2$ and $d < g + 2$, proved by Teixidor i Bigas \cite{Teixidor03}.  Note, however, that \cite{Teixidor03} proves that the maximal rank condition holds for \emph{all} divisors of degree less than $g+2$, whereas Theorem \ref{Thm:LowDegree} implies this statement only for a general divisor.

\section{Permissible functions}
\label{Sec:Permissible}

In the preceding section, we introduced a decomposition of $\Gamma$ as the disjoint union of locally closed subgraphs $\gamma_0, \ldots, \gamma_{g+1}$ and proved that if $\theta(v) = \min_I \psi_I(v)$ occurs at least twice at every point $v$ in $\gamma_i$ then the degree of $\Delta = mD + \ddiv (\theta)$ restricted to $\gamma_i$ is at least 2.  These degrees of restrictions $\Delta|_{\gamma_i}$ appear repeatedly throughout the rest of the paper, so we fix
\[
\delta_i = \deg(\Delta|_{\gamma_i}).
\]
By Proposition~\ref{Prop:TwoChipsPerLoop}, we have $\delta_i \geq 2$ for all $i$.

We now discuss how the nonnegative integer vector $\delta = (\delta_0, \ldots, \delta_{g+1})$ restricts the multisets $I$ such that $\psi_I$ can achieve the minimum on the $k$th loop of $\Gamma$.

For $a \leq b$, let $\Gamma_{[a,b]}$ be the locally closed, connected subgraph
\[
\Gamma_{[a,b]} = \gamma_a \sqcup \cdots \sqcup \gamma_b.
\]
Note that the degrees of divisors in a tropical linear series restricted to such subgraphs are governed by the slopes of the associated piecewise linear functions, as follows.

Suppose $\Gamma' \subset \Gamma$ is a closed connected subgraph and $\psi$ is a piecewise linear function with integer slopes on $\Gamma$.  Then $\ddiv (\psi|_{\Gamma'})$ has degree zero and the multiplicity of each boundary point $v \in \partial \Gamma'$ is the sum of the incoming slopes at $v$, along the edges in $\Gamma'$.  Now $\ddiv(\psi)|_{\Gamma'}$ agrees with $\ddiv(\psi|_{\Gamma'})$ except at the boundary points and a simple computation at the boundary points of the locally closed subgraph $\gamma_k$, for $ 1\leq k \leq g$ shows that
\[
\deg(\ddiv(\psi)|_{\gamma_k}) = \sigma_k(\psi) - \sigma_{k+1}(\psi),
\]
where $\sigma_k(\psi)$ is the incoming slope of $\psi$ from the left at $u_k$.  Similarly,
\[
\deg (\ddiv(\psi)|_{\Gamma_{[0,k]}}) = - \sigma_{k+1}(\psi).
\]
Our indexing conventions for lingering lattice paths are chosen for consistency with \cite{tropicalBN}, and with this notation we have
\[
\sigma_k(\psi_i) = p_{k-1}(i).
\]
These slopes, and the conditions on the edge lengths on $\Gamma$, lead to restrictions on the multisets $I$ such that $\psi_I$ achieves the minimum at some point in the $k$th loop $\gamma_k^\circ$.

\begin{definition}
Let $I \subset \{0, \ldots, r \}$ be a multiset of size $m$.  We say that $\psi_I$ is $\delta$\emph{-permissible} on $\gamma_k^\circ$ if
$$ \deg (D_I \vert_{\Gamma_{\leq k -1}}) \geq \delta_0 + \cdots + \delta_{k-1} $$
and
$$ \deg (D_I \vert_{\Gamma_{\leq k}}) \leq \delta_0 + \cdots + \delta_k . $$
We say that $\psi_I$ is $\delta$\emph{-permissible} on $\Gamma_{[a,b]}$ if there is some $k \in [a,b]$ such that $\psi_I$ is $\delta$-permissible on $\gamma_k^\circ$.
\end{definition}

\begin{lemma}
\label{Lem:LongBridges}
If $\psi_I (v) = \theta (v)$ for some $v \in \gamma_k^\circ$ then $\psi_I$ is $\delta$-permissible on $\gamma_k^\circ$.
\end{lemma}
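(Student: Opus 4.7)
The plan is to translate the two inequalities defining $\delta$-permissibility on $\gamma_k^\circ$ into slope comparisons for $\theta$ and $\psi_I$ at the midpoints $u_k$ and $u_{k+1}$ of the bridges bordering the $k$th loop, and then verify those comparisons using the admissibility of the edge lengths.

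Using the identity $\deg(\ddiv(\psi)\vert_{\Gamma_{[0,j]}}) = -\sigma_{j+1}(\psi)$ recalled just above the lemma, applied to both $\psi_I$ and $\theta$, together with $D_I - \Delta = \ddiv(\psi_I) - \ddiv(\theta)$, one computes
\[
\deg(D_I\vert_{\Gamma_{\leq j}}) - (\delta_0 + \cdots + \delta_j) = \sigma_{j+1}(\theta) - \sigma_{j+1}(\psi_I).
\]
Taking $j=k-1$ and $j=k$ shows that $\delta$-permissibility on $\gamma_k^\circ$ is equivalent to the pair of slope inequalities $\sigma_k(\theta) \geq \sigma_k(\psi_I)$ and $\sigma_{k+1}(\theta) \leq \sigma_{k+1}(\psi_I)$.

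To verify these slope inequalities, I would use that $\phi = \psi_I - \theta$ is a non-negative piecewise linear function vanishing at $v \in \gamma_k^\circ$. On each bridge $\psi_I$ is linear, while $\theta$ is the pointwise minimum of the linear functions $\psi_J$ and so is concave piecewise linear with integer slopes. If either slope inequality fails, then near the midpoint of the corresponding bridge the slope of $\phi$ is a nonzero integer, so $\phi$ varies by at least $n_{k-1}/2$ (respectively $n_k/2$) over the half-bridge between the midpoint and the loop. On the other hand, the total variation of $\phi$ along the portion of $\gamma_k^\circ$ from the loop endpoint to $v$ is bounded in absolute value by a constant depending only on $m$, $d$, and $g$ times the loop length $\ell_k + m_k$. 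The admissibility condition $2m\ell_k < \min\{n_{k-1}, n_k\}$, together with the shape control provided by $4gm_k < \ell_k$, is precisely what is needed to make the loop variation strictly smaller than the required bridge variation, forcing $\phi(v) > 0$ and contradicting $\psi_I(v) = \theta(v)$.

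The principal technical obstacle is the explicit bound on $|\phi'|$ along $\gamma_k^\circ$: one must combine a bound on the absolute slope of each individual $\psi_i$ on the $k$th loop (given by the lingering lattice path data) with the fact that $\psi_I$ and every $\psi_J$ contributing to $\theta$ is a sum of $m$ such functions, so that the admissibility constant $2m$ indeed dominates the accumulated loop contribution.
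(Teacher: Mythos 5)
Your proposal is correct and takes essentially the same approach as the paper: translate the two degree inequalities defining $\delta$-permissibility into the slope comparisons $\sigma_k(\theta) \geq \sigma_k(\psi_I)$ and $\sigma_{k+1}(\theta) \leq \sigma_{k+1}(\psi_I)$ at the bridge midpoints, then argue by contradiction that a failed slope comparison forces $\psi_I - \theta$ to exceed $n_{k-1}/2$ (resp.\ $n_k/2$) at the loop endpoint via concavity of $\theta$ along the bridge, and finally observe that the variation of $\psi_I - \theta$ across the loop is too small to recover, using the slope bounds on top and bottom edges together with the admissibility inequalities $4gm_k < \ell_k$ and $2m\ell_k < \min\{n_{k-1}, n_k\}$. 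The one place you hedge---the explicit bound on the loop variation---is exactly where the paper uses that the large slopes (up to $mg$) occur only on the short bottom edge of length $m_k$, so that $4gm_k < \ell_k$ keeps that contribution below $m\ell_k/4$, yielding a total variation at most $m\ell_k$ and making $2m\ell_k < n_{k-1}$ the right hypothesis; filling this in would complete the argument.
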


\begin{proof}
Recall that the edge lengths of $\Gamma$ are assumed to be admissible, in the sense of Definition~\ref{Def:Admissible}.

Suppose $\psi_I(v) = \theta(v)$ for some point $v$ in $\gamma_k^\circ$.  We claim that the slope of $\psi_I$ along the bridge $\beta_{k-1}$ to the left of the loop is at most the incoming slope of $\theta$ from the left at $u_{k-1}$.  Indeed, if the slope of $\psi_I$ is strictly greater than that of $\theta$ then, since $\psi_I(u_{k-1}) \geq \theta(u_{k-1})$ and the slope of $\theta$ can only decrease when going from $u_{k-1}$ to $v_k$, the difference $\psi_I(v_k) - \theta(v_k)$ will be at least the distance from $u_{k-1}$ to $v_k$, which is $n_{k-1} / 2$.

The slopes of $\psi_I$ and $\theta$ along the bottom edge are between $0$ and $mg$, and the slopes along the top edge are between $0$ and $m$.  Since $\ell_k > 4 g m_k$ by assumption, it follows that $|\psi_I - \theta|$ changes by at most $m \ell _k$ between $v_k$ and any other point in $\gamma_k^\circ$.  Assuming $2m \ell_k < n_{k-1}$, this proves the claim.

Note that the incoming slopes of $\psi_I$ and $\theta$ from the left at $u_k$ are
$$ \deg ( mD \vert_{\Gamma_{[0,k-1]}}) - \deg ( D_I \vert_{\Gamma_{[0,k-1]}}), \mbox{ \  and \ } \deg ( mD \vert_{\Gamma_{[0,k-1]}}) - \delta_0 - \cdots - \delta_{k-1}, $$ respectively.  Therefore, the claim implies that $ \deg (D_I \vert_{\Gamma_{[0,k-1]}}) \geq \delta_0 + \cdots + \delta_{k-1}$.

A similar argument using slopes along the bridge $\beta_k$ to the right of $\gamma_k^\circ$ and assuming $2m \ell_k < n_k$ shows that $ \deg (D_I \vert_{\Gamma_{\leq k}}) \leq \delta_0 + \cdots + \delta_k,$ and the lemma follows.
\end{proof}

Our general strategy for proving Conjecture~\ref{Conj:TropicalMRC} is to choose the set $\cA$ carefully, assume that the minimum occurs everywhere at least twice, and then bound $\delta_0 + \ldots + \delta_i$ inductively, moving from left to right across the graph.  By induction, we assume a lower bound on $\delta_0 + \cdots + \delta_i$.  Then, for a carefully chosen $j > i$, we consider $\delta_0 + \cdots + \delta_{j}$.  If this is too small, then Lemma~\ref{Lem:LongBridges} severely restricts which functions $\psi_I$ can achieve the minimum on loops in $\Gamma_{[i+1, j]}$, making it impossible for the minimum to occur everywhere at least twice unless the bottom edge lengths $m_{i+1}, \ldots, m_{j}$ satisfy a nontrivial linear relation with small integer coefficients.  We deduce a lower bound  on $\delta_0 + \cdots + \delta_{j}$ and continue until we can show $\delta_0 + \cdots + \delta_{g+1} > 2d$, a contradiction.  We give a first taste of this type of argument in Lemma~\ref{Lem:ThreeFunctions} and Example~\ref{Ex:Genus10}.   Example~\ref{Ex:QuadricRelation} illustrates how similar techniques may be applied to understand the kernel of $\mu_m$ when it is not injective.    A more general (and more technical) version of the key step in this argument, using the assumption that a small number of functions $\psi_I$ achieve the minimum everywhere at least twice on $\Gamma_{[i+1, j]}$ to produce a nontrivial linear relation with small integer coefficients, appears in the proof of Proposition~\ref{Prop:ConsecutiveBridges}.

\begin{notation}
For the remainder, we fix $m = 2$, and $I$ and $I_j$ will always denote multisets of size $2$ in $\{0, \ldots, r\}$, which we identify with pairs $(i,j)$ with $0 \leq i \leq j \leq r$.  We write $\psi_{ij}$ for the piecewise linear function $\psi_i + \psi_j$ corresponding to the multiset $I = \{i,j\}$.
\end{notation}

\begin{lemma}
\label{Lem:ThreeFunctions}
Suppose that $\delta_k = 2$ and $\theta(v) = \min \{ \psi_{I_1}(v), \psi_{I_2}(v), \psi_{I_3}(v) \}$ occurs at least twice at every point in $\gamma_k^\circ$.  Then, $\theta|_{\gamma_k^\circ} = \psi_{I_j}|_{\gamma_k^\circ}$, for some $1 \leq j \leq 3$.
\end{lemma}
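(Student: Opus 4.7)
The plan is to argue by contradiction: I will suppose that no single $\psi_{I_j}$ attains $\theta$ throughout $\gamma_k^\circ$, and show this forces $\delta_k \geq 3$, contradicting $\delta_k = 2$. For $j = 1, 2, 3$, set $\tilde{\Gamma}_j = \{v \in \gamma_k^\circ : \psi_{I_j}(v) = \theta(v)\}$, a closed subset of the circle $\gamma_k^\circ$, and let $U_j = \gamma_k^\circ \setminus \tilde{\Gamma}_j$ denote its open complement. Under the contradictory assumption each $\tilde{\Gamma}_j$ is a proper closed subset. The hypothesis that the minimum is attained at least twice at every point of $\gamma_k^\circ$ is equivalent to saying that every point lies in at most one $U_j$, i.e., the three $U_j$ are pairwise disjoint. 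Moreover all three $\tilde{\Gamma}_j$ must be nonempty: if, say, $\tilde{\Gamma}_3 = \emptyset$, then $\psi_{I_1}$ and $\psi_{I_2}$ would have to attain $\theta$ at every point of $\gamma_k^\circ$, forcing $\tilde{\Gamma}_1 = \tilde{\Gamma}_2 = \gamma_k^\circ$ and contradicting their properness.

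The combinatorial heart of the argument is the claim that three pairwise disjoint nonempty open subsets of a circle have at least three distinct boundary points. Each $U_j$ is a nonempty disjoint union of open arcs, contributing at least two endpoints to $\partial U_j$; no point of $\gamma_k^\circ$ can simultaneously be a boundary point of all three $U_j$, because that would require two disjoint open arcs to emanate in the same direction from the common point and hence overlap. So each boundary point is counted in at most two of the $|\partial U_j|$, giving $\bigl|\bigcup_j \partial U_j\bigr| \geq \tfrac{1}{2} \sum_j |\partial U_j| \geq 3$.

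Finally, I will apply the Shape Lemma for Minima, expressing $\theta$ as $\min_I \psi_I$ on all of $\Gamma$: each point of $\partial \Gamma_{I_j}$, where $\Gamma_{I_j} = \{v \in \Gamma : \psi_{I_j}(v) = \theta(v)\}$, lies in the support of $\Delta = 2D + \ddiv \theta$. A short topological check confirms that any boundary point of $\tilde{\Gamma}_j$ inside $\gamma_k^\circ$ is automatically a boundary point of $\Gamma_{I_j}$ in $\Gamma$: the witnessing sequence in $U_j$ lies in $\Gamma \setminus \Gamma_{I_j}$, and convergence in $\gamma_k^\circ$ is convergence in $\Gamma$. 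Combining with the previous step, $\gamma_k^\circ$ contains at least three distinct points in the support of $\Delta$; since $\Delta$ is effective and $\gamma_k \supseteq \gamma_k^\circ$, this yields $\delta_k = \deg(\Delta|_{\gamma_k}) \geq \deg(\Delta|_{\gamma_k^\circ}) \geq 3$, the desired contradiction. The step I expect to require the most care is the combinatorial middle one---verifying that three pairwise disjoint arcs on a circle really do force three distinct corners---but the ``two arcs cannot emanate in a common direction'' observation handles it cleanly.
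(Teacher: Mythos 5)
Your argument is correct and arrives at the same count of three boundary points that the paper does, but by a different route. The paper's proof is more targeted: after invoking Lemma~\ref{Lem:FunctionsDisagree} to conclude that no two of the three functions agree on all of $\gamma_k^\circ$, it fixes a single boundary point $v$ of the locus where one function, say $\psi_{I_3}$, attains the minimum; since the minimum is achieved twice at $v$ and $\psi_{I_3}$ drops out on one side of $v$, another function, say $\psi_{I_1}$, must attain $\theta$ on a full neighborhood of $v$, so $v$ lies in the interior of the $\psi_{I_1}$-locus. Then if $\psi_{I_1}$ does not attain $\theta$ on the whole loop, the two boundary points of its locus together with $v$ are three distinct points of $\Delta$ on $\gamma_k^\circ$. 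You instead show directly that all three $\tilde{\Gamma}_j$ are proper and nonempty, so the $U_j$ are nonempty, proper, pairwise disjoint open subsets of the circle, and extract three boundary points by double-counting. Both proofs feed the resulting points into the Shape Lemma for Minima in the same way; yours has the minor benefit of not needing Lemma~\ref{Lem:FunctionsDisagree}. One small detail to supply for the double count: the bound $|\partial U_j| \geq 2$ fails when $U_j$ is a single arc equal to the circle minus one point, since then $\partial U_j$ is that single point. This degenerate case is easily excluded here, because the other two $U_{j'}$ would be nonempty open sets contained in a single point, which is impossible; but it should be said, since without it the inequality $\sum_j |\partial U_j| \geq 6$ is not justified.
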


\begin{proof}
By Lemma~\ref{Lem:FunctionsDisagree}, no two of the functions may obtain the minimum on all of $\gamma_k^\circ$. After renumbering, we may assume that $\psi_{I_3}$ obtains the minimum on some but not all of the loop.  Let $v$ be a boundary point of the locus where $\psi_{I_3}$ obtains the minimum. Since there are only three functions that obtain the minimum, one must obtain the minimum in a neighborhood of $v$.  After renumbering we may assume that this is $\psi_{I_1}$.  We claim that $\theta$ is equal to $\psi_{I_1}$ on the whole loop.  If not, then by the shape lemma for minima, $D + \ddiv \theta$ would contain the two points in the boundary of the locus where $\psi_{I_1}$ obtains the minimum, in addition to $v$, contradicting the assumption that $\delta_k$, the degree of $D + \ddiv \theta$ on $\gamma_k$, is 2.
\end{proof}

\begin{remark}  \label{rem:essunique}
It follows from Lemma~\ref{Lem:ThreeFunctions} that, under the given hypotheses, the tropical dependence on the $k$th loop is essentially unique, in the sense that if $b_1$, $b_2$, and $b_3$ are real numbers such that
\[
\theta (v) = \min \{ \psi_{I_1}(v) + b_1, \psi_{I_2}(v) + b_2, \psi_{I_3}(v) + b_3\}
\]
occurs at least twice at every point on the $k$th loop, then $b_1 = b_2 = b_3$.  Furthermore, since each $\psi_{I_j}$ has constant slope along the bottom edge of $\gamma_k$ and no two agree on the entire top edge, there must be one pair that agrees on the full bottom edge and part of the top edge and another pair that agrees on part of the top edge, as shown in Figure~\ref{Fig:ThreeFunctions}.  Note that the divisor $D + \ddiv \theta$ consists of two points on the top edge and one (but not both) of these points may lie at one of the end points, $v_k$ or $w_k$.

\begin{figure}[h!]
\begin{tikzpicture}
\matrix[column sep=0.5cm] {

\begin{scope}[grow=right, baseline]
\draw (-1,0) circle (1);
\draw (-3,0)--(-2,0);
\draw (0,0)--(1,0);
\draw [ball color=black] (-0.29,0.71) circle (0.55mm);
\draw [ball color=black] (-1.71,0.71) circle (0.55mm);
\draw (-1,1.25) node {{\tiny $\psi_{I_1}\ \psi_{I_2}$}};
\draw (-1,-1.25) node {{\tiny $\psi_{I_1}\ \psi_{I_3}$}};
\end{scope}

\\};
\end{tikzpicture}

\caption{An illustration of the regions where different functions obtain the minimum in the situation of Lemma \ref{Lem:ThreeFunctions}.}
\label{Fig:ThreeFunctions}
\end{figure}
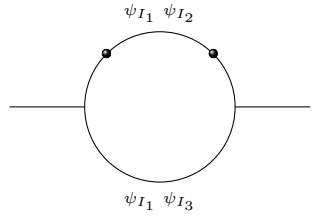

\end{remark}

Before we turn to the proof of the main theorem, we illustrate the techniques involved with a pair of examples.

\begin{example}
\label{Ex:Genus10}
Suppose $g = 10$, and let $D$ be the divisor of rank 4 and degree 12 corresponding to the tableau pictured in Figure \ref{Fig:Genus10}.  We note that this special case of the maximal rank conjecture for $m = 2$ is used to produce a counterexample to the slope conjecture in \cite{FarkasPopa05}.

\begin{figure}[h!]
\begin{tikzpicture}
\matrix[column sep=0.7cm, row sep = 0.7cm] {
\begin{scope}[node distance=0 cm,outer sep = 0pt]
	      \node[bsq] (11) at (2.5,1) {1};
	      \node[bsq] (21) [below = of 11] {2};
	      \node[bsq] (12) [right = of 11] {3};
	      \node[bsq] (22) [below = of 12] {4};
	      \node[bsq] (13) [right = of 12] {5};
	      \node[bsq] (23) [below = of 13] {6};
	      \node[bsq] (14) [right = of 13] {7};
	      \node[bsq] (24) [below = of 14] {8};
	      \node[bsq] (15) [right = of 14] {9};
	      \node[bsq] (25) [below = of 15] {10};
\end{scope}
\\};
\end{tikzpicture}
\caption{The tableau corresponding to the divisor $D$ in Example \ref{Ex:Genus10}.}
\label{Fig:Genus10}
\end{figure}

Assume that the minimum $\theta = \min \{ \psi_I \}$ occurs at least twice at every point of $\Gamma$.  By Proposition~\ref{Prop:TwoChipsPerLoop}, the divisor $\Delta = \ddiv (\theta ) + 2D$ has degree at least two on each of the 12 locally closed subgraphs $\gamma_k$.  Since $\deg (2D) = 24$, the degree of $\Delta$ on each of these subgraphs must be exactly 2.  In other words,  $\delta = (2, \ldots, 2)$.

In the lingering lattice path for $D$, we have
\begin{align*}
p_4 & = (6,5,2,1,0), \\
p_5 & = (6,5,3,1,0), \\
p_6 & = (6,5,4,1,0).
\end{align*}
The $\delta$-permissible functions $\psi_{ij}$ on $\Gamma_{[5,6]}$ are those such that either
\begin{align*}
 p_4 (i) + p_4 (j) & \leq 6  \mbox{ \  and \ }  p_5 (i) + p_5 (j)  \geq 6, \mbox{ \  or \ } \\
p_5 (i) + p_5 (j) & \leq 6  \mbox{ \  and \ }  p_6 (i) + p_6 (j)   \geq 6 .
\end{align*}
There are only 3 such pairs:  $(0,4)$, $(1,3)$, and $(2,2)$.  By Lemma \ref{Lem:ThreeFunctions}, in order for the minimum to occur at least twice at every point of $\Gamma_{[5,6]}$, on each of the two loops there must be a single function $\psi_{ij}$ that obtains the minimum at every point.  A simple case analysis shows that for both loops this function must be $\psi_{13}$ and that $\psi_{04}$ must achieve the minimum on both bottom edges.  Let $q_5$ and $q_6$ be the points of $D$ on $\gamma_5$ and $\gamma_6$, respectively, as shown in Figure \ref{Fig:Genus10Min}.

\begin{figure}[h!]
\begin{tikzpicture}
\matrix[column sep=.5cm] {

\begin{scope}[grow=right, baseline]
\draw (-1,0) circle (1);
\draw (-3,0)--(-2,0);
\draw (0,0)--(1,0);
\draw [ball color=black] (-1.707,0.707) circle (0.55mm);
\draw [ball color=black] (-.134,0.5) circle (0.55mm);
\draw [ball color=white] (-.5,.866) circle (0.55mm);
\draw (-.9,.65) node {{\tiny $\psi_{22}, \psi_{13}$}};
\draw (0.5,-0.7) node {{\tiny $\psi_{13},  \psi_{04}$}};
\draw (-.25,1.05) node {{\footnotesize $q_5$}};
\draw (2,0) circle (1);
\draw (3,0)--(4,0);
\draw [ball color=black] (1.29,0.71) circle (0.55mm);
\draw [ball color=black] (2.866,0.5) circle (0.55mm);
\draw [ball color=white] (2.5,.866) circle (0.55mm);
\draw (2.15,.65) node {{\tiny $\psi_{22}, \psi_{13}$}};
\draw (2.75,1.05) node {{\footnotesize $q_6$}};
\end{scope}

\\};
\end{tikzpicture}
\caption{Regions of $\Gamma_{[5,6]}$ on which the functions obtain the minimum.}
\label{Fig:Genus10Min}
\end{figure}
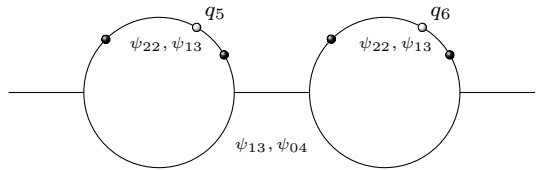

\noindent The regions of the graph are labeled by the pairs of functions $\psi_{ij} , \psi_{i'j'}$ that obtain the minimum on that region.
For each $i$, the change in value $\psi_i(q_6) - \psi_i(q_5)$ may be expressed as a function of the entries in the lattice path and the lengths of the edges in $\Gamma$.  Specifically, as we travel from $q_5$ to $q_6$, the slopes of $\psi_{22}$ and $\psi_{13}$ differ by 1 on an interval of length $m_5$ along the top edge of $\gamma_5$, and again on an interval of length $m_6$ along the top edge of $\gamma_6$.  This computation shows that
\[
(\psi_{22} (q_5) - \psi_{13} (q_5)) - (\psi_{22} (q_6) - \psi_{13} (q_6)) = m_5 - m_6.
\]
Since $\psi_{13}$ and $\psi_{22}$ agree at $q_5$ and $q_6$, it follows that $m_5$ must equal $m_6$, contradicting the hypothesis that $\Gamma$ has admissible edge lengths in the sense of Definition~\ref{Def:Admissible}.

We conclude that the minimum cannot occur everywhere at least twice, so $\{ \psi_I\}_I$ is tropically independent.  Therefore, for any curve $X$ with skeleton $\Gamma$ and any lift of $D$ to a divisor $D_X$ of rank $4$, the map
\[
\mu_2: Sym^2 \cL(D_X) \rightarrow \cL(2D_X)
\]
is injective.
\end{example}

We now consider an example illustrating our approach via tropical independence when $\mu_2$ is not injective.  Recall that the canonical divisor on a non-hyperelliptic curve of genus $4$ gives an embedding in $\PP^3$ whose image is contained in a unique quadric.  This is the special case of the maximal rank conjecture where $g$, $r$, $d$, and $m$ are $4$, $3$, $6$, and $2$, respectively.

\begin{example}  \label{Ex:QuadricRelation}
Suppose $g = 4$ and $m = 2$.  Note that the class of the canonical divisor $D = K_\Gamma$ is vertex avoiding of rank $3$.  Since $\Gamma$ is the skeleton of a curve whose canonical embedding lies on a quadric, the functions $\psi_I$ are tropically dependent, and we may assume $\min_I \psi_I(v)$ occurs at least twice at every point $v \in \Gamma$.

Let $\theta (v)  = \min_I \psi_I(v),$ and let $\Delta = 2K_\Gamma + \ddiv \theta$.  By Proposition~\ref{Prop:TwoChipsPerLoop}, the degree $\delta_k$ of $\Delta$ on $\gamma_k$ is at least 2 for $k = 0, \ldots, 5$.  Since $\deg(\Delta) = 12$, it follows that $\delta = (2, \ldots, 2)$.

The lingering lattice path associated to $K_{\Gamma}$ is given by
\begin{align*}
p_0 & = (3,2,1,0), \\
p_1 & = (4,2,1,0), \\
p_2 & = (4,3,1,0), \\
p_3 & = (4,3,2,0), \\
p_4 & = (3,2,1,0).
\end{align*}
Since $\delta_0 = \delta_1 = 2$, the $\delta$-permissible functions $\psi_{ij}$ on $\gamma_1$ are those such that
$$ p_0 (i) + p_0 (j) \leq 4 \mbox{ \  and \ } p_1 (i) + p_1 (j) \geq 4 . $$
There are only three such pairs:  $(0,2)$, $(1,1)$, and $(0,3)$.  In a similar way, we see that there are precisely three $\delta$-permissible functions on each loop $\gamma_k$.  By Lemma \ref{Lem:ThreeFunctions} and Remark~\ref{rem:essunique}, the tropical dependence among the three functions that achieve the minimum on each loop is essentially unique.  Figure~\ref{Fig:Genus4} illustrates the combinatorial structure of this dependence.
\begin{figure}[h!]
\begin{tikzpicture}
\matrix[column sep=0cm] {
\begin{scope}[grow=right, baseline]
\draw (-1,0) circle (1);
\draw (-2.7,0)--(-2,0);
\draw [ball color=black] (-2.7,0) circle (0.55mm);
\draw (0,0)--(1,0);
\draw [ball color=black] (-0.5,0.87) circle (0.55mm);
\draw [ball color=black] (0,0) circle (0.55mm);
\draw (-1.95,-1) node {{\tiny $\psi_{11}, \psi_{02}$}};
\draw (-.7,.4) node {{\tiny $\psi_{02}, \psi_{03}$}};
\draw (2,0) circle (1);
\draw (3,0)--(4,0);
\draw [ball color=black] (1,0) circle (0.55mm);
\draw [ball color=black] (2.71,0.71) circle (0.55mm);
\draw (0.5,-0.25) node {{\tiny $\psi_{11}, \psi_{03}$}};
\draw (1.75,1.2) node {{\tiny $\psi_{11}, \psi_{12}$}};
\draw (3.5,-.7) node {{\tiny $\psi_{12}, \psi_{03}$}};
\draw (5,0) circle (1);
\draw (6,0)--(7,0);
\draw [ball color=black] (6,0) circle (0.55mm);
\draw [ball color=black] (4.29,0.71) circle (0.55mm);
\draw (5.25,1.2) node {{\tiny $\psi_{12}, \psi_{22}$}};
\draw (8,0) circle (1);
\draw (9,0)--(9.7,0);
\draw [ball color=black] (7,0) circle (0.55mm);
\draw [ball color=black] (7.5,0.87) circle (0.55mm);
\draw (7.75,.4) node {{\tiny $\psi_{03}, \psi_{13}$}};
\draw (6.5,-0.25) node {{\tiny $\psi_{22}, \psi_{03}$}};
\draw (8.95,-1) node {{\tiny $\psi_{22}, \psi_{13}$}};
\draw [ball color=black] (9.7,0) circle (0.55mm);
\end{scope}
\\};
\end{tikzpicture}
\caption{The unique tropical dependence for the canonical linear system when $g = 4$ and $m = 2$.}
\label{Fig:Genus4}
\end{figure}
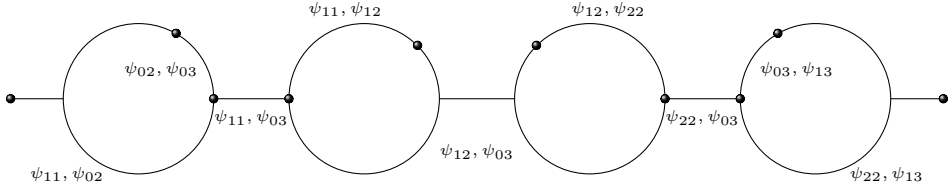

\noindent  Since this dependence among the functions that realize the minimum at some point in $\Gamma$ is essentially unique, omitting any one of the six functions that appear leaves a tropically independent set of size 9.  Therefore, the map
\[
\mu_2: Sym^2 \cL(D_X) \rightarrow \cL(2D_X)
\]
has rank at least 9.  Since $\cL(2D_X)$ has dimension $9$, it follows that $\mu_2$ is surjective.
\end{example}

\section{Shapes of functions, excess degree, and linear relations among edge lengths} \label{Sec:Excess}

In this section, and in \S\ref{sec:rhozero}, below, we assume that $\rho(g,r,d) = 0$.   All of the essential difficulties appear already in this special case.  The case where $\rho(g,r,d) > 0$ is treated in \S\ref{sec:rhopos} through a minor variation on these arguments.

\medskip

We now proceed with the more delicate and precise combinatorial arguments required to prove Theorem~\ref{thm:mainthm}.  With $g$, $r$, and $d$ fixed, and assuming $d - g \leq r$, we must produce a divisor $D$ of degree $d$ and rank $r$ on $\Gamma$, together with a set
\[
\cA \subset \{ (i,j) \ | \ 0 \leq i \leq j \leq r \}
\]
of size $$\# \cA = \min \left\{ {r+2 \choose 2}, 2d-g+1 \right \},$$ such that the corresponding collection of rational functions
\[
\{ \psi_{ij} \ | \ (i,j) \in \cA \}
\]
is tropically independent.

\begin{notation}
The quantity $g-d+r$ appears repeatedly throughout, so we fix the notation
\[
s = g-d+r,
\]
which simplifies various formulas.  The condition that $\rho(g,r,d) = 0$ means that $g = (r+1)s$.
\end{notation}

\noindent We now specify the divisor $D$ that we will use to prove Conjecture~\ref{Conj:TropicalMRC} for $m = 2$ when $\rho(g,r,d) = 0$.  The set $\cA$ is described in \S\ref{sec:rhozero}.

\begin{notation}  For the remainder of this section and \S\ref{sec:rhozero}, let $D$ be the divisor of degree $d$ and rank $r$ on $\Gamma$ corresponding to the standard tableau with $r+1$ columns and $s$ rows in which the numbers $1, \ldots, s$ appear in the leftmost column; $s+1, \ldots, 2s$ appear in the next column, and so on.  We number the columns from zero to $r$, so the $\ell$th column contains the numbers $\ell s + 1, \ldots, (\ell + 1) s$.  The specific case $g=10$, $r=4$, $d=12$ is illustrated in Figure~\ref{Fig:Genus10} from Example~\ref{Ex:Genus10}.
\end{notation}

\begin{remark}
Our choice of divisor is particularly convenient for the inductive step in the proof of Theorem~\ref{thm:mainthm}, in which we divide the graph $\Gamma$ into the $r +1$ regions $\Gamma_{[\ell s + 1, (\ell + 1)s]}$, for $0 \leq \ell \leq r$, and move from left to right across the graph, one region at a time, studying the consequences of the existence of a tropical dependence.  Since the numbers $\ell s + 1, \ldots, (\ell + 1) s$ all appear in the $\ell$th column, the slopes of the functions $\psi_i$, for $i \neq \ell$, are the same along all bridges and bottom edges, respectively, in the subgraph $\Gamma_{[\ell s + 1, (\ell + 1)s]}$.  Only the slopes of $\psi_\ell$ are changing in this region.
\end{remark}

Here we describe the \emph{shape} of the function $\psi_i$, by which we mean the combinatorial configuration of regions on the loops and bridges on which $\psi_i$ has constant slope, as well as the slopes from left to right on each region. These data determine (and are determined by) the combinatorial configurations of the points in $D_i = D + \ddiv(\psi_i)$.

Fix $ 0 \leq \ell \leq r$.  Suppose $\ell s + 1 \leq k \leq (\ell + 1)s$, so $\gamma_k^\circ$ is a loop in the subgraph $\Gamma_{[\ell s + 1, (\ell + 1) s]}$.  Recall from \S\ref{Section:TheGraph} and \S\ref{Sec:Permissible} that $D$ contains one point on the top edge of $\gamma_k^\circ$, at distance
\[
p_{k-1}(\ell) = \sigma_k(\ell)
\]
in the counterclockwise direction from $w_k$, where $\sigma_k(\ell)$ is the slope of $\psi_\ell$ along the bridge $\beta_k$.

\bigskip

\noindent \emph{Case 1:}  The shape of $\psi_i$, for $i < \ell$.  If $i < \ell$ then $D_i = D + \ddiv \psi_i$ contains one point on the top edge of $\gamma_k^\circ$, at distance $(r + s - i - 1 - \sigma_k(\ell)) \cdot m_k$ from $v_k$, the left endpoint of $\gamma_k^\circ$.  This is illustrated schematically in Figure~\ref{Fig:LeftHalf}.
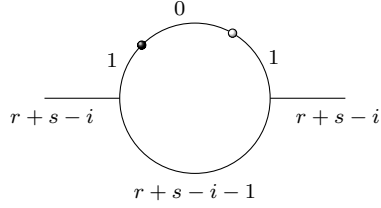
\begin{figure}[h!]
\begin{tikzpicture}
\matrix[column sep=0cm] {

\begin{scope}[grow=right, baseline]
\draw (-1,0) circle (1);
\draw (-3,0)--(-2,0);
\draw (0,0)--(1,0);
\draw [ball color=white] (-.5,.866) circle (0.55mm);
\draw [ball color=black] (-1.707,.707) circle (0.55mm);
\draw (-2.9,-0.25) node {\footnotesize $r+s-i$};
\draw (.9,-0.25) node {\footnotesize $r+s-i$};
\draw (-1,-1.25) node {\footnotesize $r+s-i-1$};
\draw (0.05,0.55) node {\footnotesize 1};
\draw (-2.1,0.5) node {\footnotesize 1};
\draw (-1.2,1.2) node {\footnotesize 0};
\end{scope}
&

\\};
\end{tikzpicture}
\caption{The shape of $\psi_i$, for $i<\ell$.}
\label{Fig:LeftHalf}
\end{figure}
The point of $D_i$ on the top edge of $\gamma_k^\circ$ is marked with a black circle, and the point of $D$ is marked with a white circle.  Each region of constant slope is labeled with the slope of $\psi_i$ from left to right.  The slope of $\psi_i$ from left to right along each bridge adjacent to $\gamma_k^\circ$ is $r + s - i$, and the slope along the bottom edge is $r + s - i -1$.

\bigskip

\noindent \emph{Case 2:}  The shape of $\psi_j$, for $j > \ell$.  If $j > \ell$ then $D_j  = D + \ddiv \psi_j$ contains one point on the top edge of $\gamma_k^\circ$, at distance $(\sigma_k(\ell) - r + j)$ from $w_k$, as shown in Figure~\ref{Fig:RightHalf}.
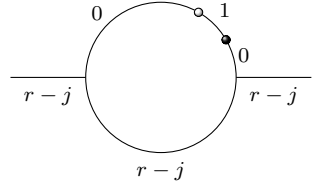
\begin{figure}[h!]
\begin{tikzpicture}
\matrix[column sep=0 cm] {

\begin{scope}[grow=right, baseline]
\draw (4,0) circle (1);
\draw (2,0)--(3,0);
\draw (5,0)--(6,0);
\draw [ball color=white] (4.5,.866) circle (0.55mm);
\draw [ball color=black] (4.866,0.5) circle (0.55mm);
\draw (2.5,-0.25) node {\footnotesize $r-j$};
\draw (5.5,-0.25) node {\footnotesize $r-j$};
\draw (4,-1.25) node {\footnotesize $r-j$};
\draw (3.15,0.85) node {\footnotesize 0};
\draw (5.1,0.3) node {\footnotesize 0};
\draw (4.85,.9) node {\footnotesize 1};
\end{scope}
&

\\};
\end{tikzpicture}
\caption{The shape of $\psi_j$, for $j>\ell$.}
\label{Fig:RightHalf}
\end{figure}
The slope of $\psi_j$ along the bottom edge and both adjacent bridges is $r - j$.

\bigskip

\noindent \emph{Case 3:}  The shape of $\psi_\ell$.  The divisor $D_\ell$ has no points on $\gamma_k^\circ$, as shown in Figure~\ref{Fig:NoHalf}.
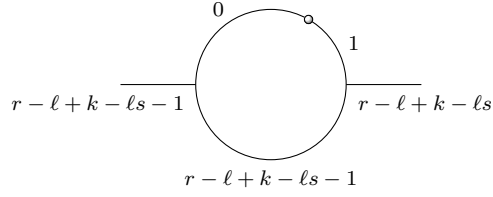
\begin{figure}[h!]
\begin{tikzpicture}
\matrix[column sep=0cm] {

\begin{scope}[grow=right, baseline]
\draw (9,0) circle (1);
\draw (7,0)--(8,0);
\draw (10,0)--(11,0);
\draw [ball color=white] (9.5,.866) circle (0.55mm);
\draw (6.7,-0.25) node {\footnotesize $r-\ell + k - \ell s -1$};
\draw (11.05,-0.25) node {\footnotesize $r-\ell + k - \ell s$};
\draw (9,-1.25) node {\footnotesize $r-\ell + k - \ell s -1$};
\draw (8.3,1) node {\footnotesize 0};
\draw (10.1,0.55) node {\footnotesize 1};
\end{scope}

\\};
\end{tikzpicture}
\caption{The shape of $\psi_\ell$ on $\gamma_k$.}
\label{Fig:NoHalf}
\end{figure}
Note that this is the only case in which the slope is not the same along the two bridges adjacent to $\gamma_k^\circ$.

\medskip

We use the shapes of the functions $\psi_i$ to control the set of pairs $(i,j)$ such that $\psi_{ij}$ is $\delta$-permissible on certain loops, as follows.  Suppose $\{ \psi_{ij} \}$ is tropically dependent, so there are constants $b_{ij}$ such that $\min \{ \psi_{ij}(v) + b_{ij} \}$ occurs at least twice at every point $v \in \Gamma$.  Replacing $\psi_{ij}$ with $\psi_{ij} + b_{ij}$, we may assume $\min \{ \psi_{ij}(v) \}$ occurs at least twice at every point.  Let
\[
\theta = \min_{ij} \{ \psi_{ij} \}, \mbox{ \ \ } \Delta = 2D + \ddiv(\theta), \mbox{ \ and \ } \delta_i = \deg(\Delta|_{\gamma_i}).
\]
By Proposition~\ref{Prop:TwoChipsPerLoop}, each $\delta_i$ is at least 2, and some may be strictly greater.  We keep track of the \emph{excess degree function}
\[
e(k) = \delta_0 + \cdots + \delta_k - 2k.
\]
It contains exactly the same information as $\delta$, but in a form that is somewhat more convenient for our inductive arguments in \S\ref{sec:rhozero}.  Note that $e(k)$ is positive and nondecreasing as a function of $k$.

In the induction step, we study the $\delta$-permissible functions $\psi_{ij}$ on subgraphs $$\Gamma_{[a(\ell), b(\ell)]} \subseteq \Gamma_{[\ell s + 1, (\ell + 1)s]},$$ where $a(\ell)$ and $b(\ell)$ are given by
\[
a(\ell) = \left\{ \begin{array}{ll} \ell s + 1 & \mbox{ for } \ell \leq \rr, \\ [3 pt]
								\ell (s +1) - \rr + 1 & \mbox{ for } \ell > \rr, \end{array} \right.
\]
and
\[
b(\ell) = \left\{ \begin{array}{ll} \ell (s + 1) - \rr + s & \mbox{ for } \ell \leq \rr, \\ [3 pt]
								(\ell + 1)s & \mbox{ for } \ell > \rr. \end{array} \right.
\]
Note that the subgraph $\Gamma_{[a(\ell), b(\ell)]}$ is only well-defined if $a(\ell) \leq b(\ell)$.  This is the case when $\ell$ is in the range
$$ \max \left\{ 0, \rr - s \right\} \leq \ell < \min \left\{ r, \rr + s \right\} . $$
We focus on the situation where $e(\ell s)$ and $e((\ell + 1)s)$ are both equal to $\ell - s + \rr$, which is the critical case for our argument.

\begin{lemma}
\label{Lem:Counting}
Suppose $$e(\ell s) = e( \ell + 1)s = \ell - s + \rr,$$ for some $0 \leq \ell \leq r$.
If $\psi_{ij}$ is $\delta$-permissible on $\Gamma_{[a(\ell),b(\ell)]}$, then either
\begin{enumerate}
\item  $i < \ell < j$, and $i+j = \ell + \rr$, or
\item  $i = j = \ell$.
\end{enumerate}
\end{lemma}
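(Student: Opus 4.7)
The plan is to reformulate $\delta$-permissibility as a pair of numerical inequalities at the two endpoints of $\Gamma_{[a(\ell), b(\ell)]}$, and then to run through the cases for $(i,j)$. Set $F(I,k) = \deg(D_I|_{\Gamma_{\leq k}}) - 2k$ and $f_k(i) = \deg(D_i|_{\Gamma_{\leq k}}) - k$, so that $F(I,k) = f_k(i) + f_k(j)$ for $I = \{i,j\}$. Since $\delta_0 + \cdots + \delta_k = 2k + e(k)$, the two defining inequalities of $\delta$-permissibility on $\gamma_k^\circ$ read $F(I, k-1) \geq e(k-1)$ and $F(I, k) \leq e(k)$. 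The excess function $e$ is nondecreasing because each $\delta_i \geq 2$, so the hypothesis $e(\ell s) = e((\ell+1)s) = E_\ell := \ell - s + \rr$ forces $e \equiv E_\ell$ on $[\ell s, (\ell+1)s] \supseteq [a(\ell), b(\ell)]$. Meanwhile $F(I, k) - F(I, k-1) = \deg(D_I|_{\gamma_k}) - 2 \leq 0$, so $F(I, \cdot)$ is nonincreasing, and $\delta$-permissibility on $\Gamma_{[a(\ell), b(\ell)]}$ reduces to the two-point condition
\[
F(I, a(\ell) - 1) \geq E_\ell \geq F(I, b(\ell)).
\]

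Next I evaluate $f_k$ at the two boundary points. Since $D_i$ has $i$ chips at $w_0$ and one chip on every loop $\gamma_{k'}^\circ$ outside column $i$, a direct count gives, for a loop $\gamma_k^\circ$ in column $\ell'$,
\[
f_k(i) = \begin{cases} i - s & \text{if } i < \ell', \\ \ell'(s+1) - k & \text{if } i = \ell', \\ i & \text{if } i > \ell'. \end{cases}
\]
Splitting into Case I ($\ell \leq \rr$, with $a(\ell) - 1 = \ell s$ ending column $\ell - 1$ and $b(\ell) = (\ell+1)s + \ell - \rr$ lying in column $\ell$) and Case II ($\ell > \rr$, with both boundary points in column $\ell$), one produces compact tables of $f_{a(\ell)-1}(i)$ and $f_{b(\ell)}(i)$ as $i$ ranges below, at, or above $\ell$.

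The key observation is that for any pair with $i < \ell < j$, both $D_i$ and $D_j$ have chips on every loop in column $\ell$, so $f_k(i) + f_k(j) = (i-s) + j$ at both endpoints; hence $F(I, \cdot)$ is constant on $[a(\ell)-1, b(\ell)]$, and the two inequalities collapse to the single equation $i + j - s = E_\ell$, i.e., $i + j = \ell + \rr$, giving alternative (1). For the diagonal pair $i = j = \ell$, the values $F(\{\ell,\ell\}, a(\ell)-1) = 2\ell$ (resp.\ $2\rr$) and $F(\{\ell,\ell\}, b(\ell)) = 2(\rr - s)$ (resp.\ $2(\ell - s)$) in Case I (resp.\ Case II) straddle $E_\ell$ for every $\ell$ in the valid range, giving alternative (2). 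For each of the four remaining configurations of $(i,j)$ relative to $\ell$ --- $i \leq j < \ell$, $i < j = \ell$, $i = \ell < j$, and $\ell < i \leq j$ --- substitution of the three-case formula for $f_k$ shows that one of the two inequalities is violated by a single-line arithmetic check. The main obstacle is purely bookkeeping: the formulas for $a(\ell), b(\ell)$ and for $f_k$ each split into subcases at $\ell = \rr$ and $i = \ell$, so one has to keep track of which column each boundary point lies in before applying the correct formula; no individual arithmetic check is deep.
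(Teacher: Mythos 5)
Your proof is correct and follows essentially the same strategy as the paper's: fix $\ell$, use the constancy of $e$ on $[\ell s, (\ell+1)s]$ together with the explicit formula for $\deg(D_i|_{\Gamma_{[0,k]}})$ coming from the tableau structure, and compare degrees for each configuration of $(i,j)$ relative to $\ell$. The one organizational improvement over the paper is your observation that $F(I,\cdot)$ is nonincreasing while $e$ is locally constant, which lets you test permissibility at only the two endpoints $a(\ell)-1$ and $b(\ell)$ rather than at a general $k$ in the interval.
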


\begin{proof}
Note that, by our choice of $D$,
\[
\deg(D_i|_{\Gamma_{[0,k]}}) = \left\{ \begin{array}{ll} i + k & \mbox{ for } i > \ell, \\
														i + \ell s & \mbox{ for } i = \ell, \\
														i + k - s & \mbox{ for } i < \ell. \end{array}  \right.
\]
Also, since $e(k)$ is nondecreasing,
$$ e(k) = \ell - s + \rr $$
for all $k$ in $[\ell s,(\ell +1)s]$, and in particular for $k$ in  $[a(\ell), b(\ell)]$.

We now prove the lemma in the case where $\ell \leq \rr$.  The proof in the case where $\ell > \rr$ is similar.  Suppose $i \geq \ell,$ $j > \ell,$ and $k \in [a(\ell), b(\ell)]$.  Then
\begin{align*}
 \deg (D_{ij} \vert_{\Gamma_{[0,k]}}) & \geq i + j + k + \ell s \\
                                                           &  > 2\ell + k + \ell s.
\end{align*}
On the other hand, we have
\begin{align*}
\deg (\Delta|_{\Gamma_{[0,k]}}) 	&= 2k + \ell - s + \rr \\
									&\leq 2\ell + k + \ell s,
									\end{align*}
where the inequality is given by using $k \leq b(\ell)$ and $b(\ell) =	\ell (s + 1) - \rr + s$.  Combining the two displayed inequalities shows that $\deg(D_{ij}|_{\Gamma_{[0,k]}})$ is greater than $\deg(\Delta|_{\Gamma_{[0,k]}})$, and hence $\psi_{ij}$ is not $\delta$-permissible on $\gamma_k^\circ$.

A similar argument shows that, if $i < \ell$, $j \leq \ell$, and $k \in [a(\ell), b(\ell)]$, then
\begin{align*}
\deg(D_{ij}|_{\Gamma_{[0,k-1]}})  	&\leq i + j + \ell s + k - 1 - s \\
									&< 2 \ell + \ell s + k - 1 - s.
\end{align*}
On the other hand, since $\ell \leq \rr$ by hypothesis, and $k \geq \ell s + 1$, we have
\begin{align*}
\deg(\Delta|_{\Gamma_{[0,k-1]}}) 	& = 2k -2 + \ell - s + \rr \\
									& \geq 2k -2 + 2\ell - s \\
									& \geq 2 \ell + \ell s + k - 1 - s.
\end{align*}
In this case, we conclude that $\deg(D_{ij}|_{\Gamma_{[0,k-1]}})$ is less than $\deg(\Delta|_{\Gamma_{[0,k-1]}})$, and hence $\psi_{ij}$ is not $\delta$-permissible on $\gamma_k^\circ$.

We have shown that, if $\psi_{ij}$ is $\delta$-permissible on $\Gamma_{[a(\ell),b(\ell)]}$, then either $i=j=\ell$ or $i < \ell < j$.  It remains to show that if $i < \ell < j$ then $i + j = \ell + \rr$.  Suppose $i < \ell < j$.  Then
\[
\deg(D_{ij}|_{\Gamma_{[0.k]}}) = i + j + 2k -s.
\]
If $\psi_{ij}$ is $\delta$-permissible on $\gamma_k^\circ$, then this is less than or equal to $\deg(\Delta|_{\Gamma_{[0,k]}})$, which is $2k + \ell - s + \rr$.  It follows that $i + j \leq \ell + \rr$.  Similarly, if $\psi_{ij}$ is $\delta$-permissible on $\gamma_k^\circ$ then $\deg(D_{ij}|_{\Gamma_{[0,k-1]}}) \geq \deg(\Delta|_{\Gamma_{[0,k-1]}})$, and it follows that $i + j \geq \ell + \rr$.  Therefore, $i + j = \ell + \rr$, as required.
\end{proof}

We continue with the notation from Lemma~\ref{Lem:Counting}, with $\ell$ a fixed integer between $0$ and $r$, and $[a(\ell), b(\ell)]$ the corresponding subinterval of $[\ell s + 1, (\ell + 1)s]$.  We also fix a subset $\cA \subset \{ (i,j) \in \ZZ^2 \ | \ 0 \leq i \leq j \leq r \}$ and suppose that $\theta(v) = \min \{ \psi_{ij}(v) \ | \ (i,j) \in \cA \}$ occurs at least twice at every point. Equivalently, in the set up of Lemma~\ref{Lem:Counting}, we assume that $b_{ij} \gg 0$ for $(i,j)$ not in $\cA$.

\begin{remark}  \label{rem:zeroloops}
The following proposition is the key technical step in our inductive argument, and may be seen as a generalization of the following two simple facts.  In order for the minimum to be achieved everywhere at least twice, on a chain of zero loops (i.e.\ a single edge), at least two functions are required, and on a chain of one loop, at least three functions are required (Lemma~\ref{Lem:FunctionsDisagree}).
\end{remark}

\begin{proposition}
\label{Prop:ConsecutiveBridges}
Suppose $ e(a(\ell)) = e(b(\ell)) = \ell  - s + \rr. $
Then there are at least $b(\ell)-a(\ell)+3$ functions $\psi_{ij}$, with $(i,j) \in \cA$, that are $\delta$-permissible on $\Gamma_{[a(\ell),b(\ell)]}$.  \end{proposition}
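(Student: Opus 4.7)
Write $a = a(\ell)$, $b = b(\ell)$, and $N = b - a + 1$, and argue by contradiction: suppose that at most $N+1$ functions $\psi_{ij}$ with $(i,j) \in \cA$ are $\delta$-permissible on $\Gamma_{[a,b]}$. The goal is to derive a nontrivial integer linear relation $\sum_k c_k m_k = 0$ with $|c_k| \leq g+1$, contradicting the admissibility of edge lengths in Definition~\ref{Def:Admissible}.

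By Lemma~\ref{Lem:Counting}, the only candidates are $\psi_{\ell\ell}$ together with the \emph{antidiagonal} family $\{\psi_{ij} : i < \ell < j,\ i + j = \ell + \rr\}$. Using the shape descriptions in Cases~1--3 above, all antidiagonal $\psi_{ij}$'s share the same (constant) slope along each bridge and each bottom edge in $\Gamma_{[a,b]}$, and two of them differ only in the positions of their chips on the top edges of the loops. By contrast, $\psi_{\ell\ell}$'s slopes on bridges and bottom edges strictly increase by $2$ as $k$ increases, so $\psi_{\ell\ell}$ cannot agree with any antidiagonal function on an entire bridge --- it can only cross them transversely.

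Since $e$ is constant on $[a,b]$, we have $\delta_k = 2$ for every $k \in [a+1,b]$. On each such loop, Lemma~\ref{Lem:FunctionsDisagree} combined with Lemma~\ref{Lem:ThreeFunctions} and Remark~\ref{rem:essunique} pins down the local picture: at least three functions attain the minimum, one along the whole bottom edge and others alternating on the top edge, with transition points determined by $m_k$ and the constants $b_{ij}$. On each interior bridge of $\Gamma_{[a,b]}$, the requirement that the minimum occur at least twice forces two antidiagonal functions to agree along the entire bridge; call this pair the \emph{dominant pair} on that bridge. Comparing the dominant pair on two bridges separated by several loops, the difference between the two antidiagonal functions picks up an integer linear combination of the intervening $m_k$'s, the coefficients being differences of chip positions on the top edges.

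With only $M \leq N+1$ candidate functions available and $N-1$ interior bridges each carrying a dominant pair, the assignment of dominant pairs cannot proceed freely. Encoding the assignment as a graph whose vertices are the candidates and whose edges are the dominant-pair assignments produces at least one cycle, and traversing the cycle through the compatibility equations along the intervening loops yields a nontrivial integer linear relation among $m_a, \ldots, m_b$ whose coefficients are bounded in absolute value by the maximal index difference of antidiagonal pairs, which is at most $r \leq g+1$. The boundary loops $\gamma_a^\circ$ and $\gamma_b^\circ$ (where $\delta_a$ may exceed $2$) and the peripheral role of $\psi_{\ell\ell}$ (which contributes to minima only on loops but still counts toward the budget of $N+1$ candidates) require parallel but separate bookkeeping. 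The resulting relation contradicts Definition~\ref{Def:Admissible}, proving the proposition. The main obstacle is the cycle-to-relation step: one must verify that the relation extracted is genuinely nontrivial (i.e., some coefficient is nonzero) and that all coefficients remain bounded by $g+1$, which in turn depends on a careful case analysis of how the antidiagonal dominant pair can evolve across the boundary of $\Gamma_{[a,b]}$ and around any active role played by $\psi_{\ell\ell}$.
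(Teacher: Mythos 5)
Your high-level strategy matches the paper's --- suppose there are at most $b-a+2$ permissible functions, encode who-meets-whom as a graph, extract a cycle, and read off a small integer linear relation among the $m_k$ contradicting Definition~\ref{Def:Admissible}. But the specific encoding you propose has two genuine problems, and they are not just ``parallel bookkeeping'' details.

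First, tracking ``dominant pairs'' on bridges does not produce the edge-length relations you claim. Any two antidiagonal functions $\psi_{ij}$, $\psi_{i'j'}$ with $i+j = i'+j' = \ell + \rr$ have identical slopes on every bridge \emph{and} every bottom edge in $\Gamma_{[a,b]}$ (the only place they differ is on the top edges). Consequently the difference $\psi_{ij}(v_k) - \psi_{i'j'}(v_k)$ is the same constant for every bridge vertex $v_k$ in the region; no $m_k$'s accumulate when you pass from one bridge to the next. The pair that achieves the minimum at a bridge vertex is therefore the same pair at every bridge vertex, and there is no nontrivial relation to extract from bridge-to-bridge comparisons. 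Second, even setting that aside, the count is short: $N-1 = b-a$ interior bridges yield only $b-a$ edges, while you allow up to $b-a+2$ vertices, so no cycle is forced.

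The paper avoids both problems by recording which pair achieves the minimum at the \emph{interior} point $q_k$ of $D$ on each loop for $a \leq k \leq b$, plus which pair achieves the minimum at one additional reference vertex $v_h$. This gives $b-a+2$ edges against at most $b-a+2$ vertices, forcing a cycle. Moreover, the quantities $\psi_{ij}(q_k) - \psi_{i'j'}(q_k)$ do involve $m_k$ --- precisely because the two functions differ on the top edge near $q_k$ --- and this is what formulas (\ref{first})--(\ref{third}) of the paper make explicit, with the $\psi_{\ell\ell}$ comparisons in (\ref{second}) and (\ref{third}) contributing the extra $\lambda_k m_{k-1}$ term. The role of $\psi_{\ell\ell}$ is not peripheral: its slope increases by $2$ along successive bridges, so it is permissible on exactly one or two loops depending on the parity of $\sigma$, and the reference vertex $v_h$ is pinned to the leftmost such loop. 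Without that anchoring, the formulas you would need to close the cycle are not available.
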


\begin{proof}
Let $a = a(\ell)$ and $b = b(\ell)$.  Assume that there are at most $b - a + 2$ functions that are $\delta$-permissible on $\Gamma_{[a, b]}$.  We will show that the bottom edge lengths $m_k$ for $k \in [a, b]$ satisfy a linear relation with small integer coefficients, contradicting the admissibility of the edge lengths of $\Gamma$ (Definition~\ref{Def:Admissible}).

Since $e(k)$ is nondecreasing, the assumption that $e(a) = e(b)$ implies that $\Delta$ contains exactly two points on each loop in $\Gamma_{[a,b]}$, and no points in the interiors of the bridges. It follows that $\theta$ has constant slope on each of these bridges.  As discussed in Section~\ref{Sec:Permissible}, the slope at the midpoint of $\beta_k$ is determined by the degree of $\ddiv \theta$ on $\Gamma_{[0,k]}$, and one computes that this slope is $2r - e(k)$.  Therefore, the slope of $\theta$ is constant on every bridge in $\Gamma_{[a,b]}$, and equal to $$\sigma := 2r - \ell + s - \rr.$$

We begin by describing the shapes of the $\delta$-permissible functions $\psi_{ij}$ on $\Gamma_{[a, b]}$.  By Lemma \ref{Lem:Counting}, the $\delta$-permissible functions $\psi_{ij}$ satisfy either $i=j=\ell$ or $i < \ell < j$ and $i + j = \ell + \rr$.  Suppose $i < \ell < j$.  In this case, the shape of $\psi_{ij}$ on the subgraph $\gamma_k$ is as pictured in Figure \ref{Fig:PermissibleShape}.

\begin{figure}[h!]
\begin{tikzpicture}
\matrix[column sep=0.5cm] {

\begin{scope}[grow=right, baseline]
\draw (-1,0) circle (1);
\draw (-3,0)--(-2,0);
\draw (0,0)--(1,0);
\draw [ball color=white] (-.5,.866) circle (0.55mm);
\draw (-.6, .6) node {$\tiny q_k$};
\draw [ball color=black] (-1.707,0.707) circle (0.55mm);
\draw [ball color=black] (-0.134,0.5) circle (0.55mm);
\draw (-2.5,-0.25) node {$\tiny{\sigma}$};
\draw (0.5,-0.25) node {$\tiny{\sigma}$};
\draw (-1,-1.25) node {$\tiny{\sigma-1}$};
\draw (-.15,.9) node {$\tiny{2}$};
\draw (0.15,0.3) node {$\tiny{1}$};
\draw (-2.15,0.3) node {$\tiny{1}$};
\draw (-1.2,1.22) node {$\tiny{0}$};
\end{scope}

\\};
\end{tikzpicture}
\caption{The shape of $\psi_{ij}$ on $\gamma_k$, for $i < \ell < j$.}
\label{Fig:PermissibleShape}
\end{figure}
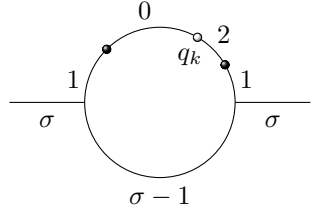

Note that the shape of $\psi_{ij}$ is determined by the shapes of $\psi_i$ and $\psi_j$, as shown in Figures~\ref{Fig:LeftHalf} and \ref{Fig:RightHalf}, respectively.  The point $q_k$ of $D$ on $\gamma_k^\circ$ is marked with a white circle.  The fact that the slopes of $\psi_{ij}$ along the bridges are equal to $\sigma$ is due to the condition $i + j = \ell + \rr$.

We now describe the shape of the function $\psi_{\ell \ell}$.  Note that the slope of $\psi_{\ell \ell}$ along the bridge $\beta_{\ell s}$ is $2r - 2 \ell$, and the slope increases by two along each successive bridge $\beta_k$, for $k \in [\ell s + 1, (\ell + 1) s]$.  It follows that if $\sigma$ is odd then $\psi_{\ell \ell}$ is $\delta$-permissible on only one loop $\gamma_h^\circ$, as shown in Figure~\ref{Fig:PermissibleShape2}.

\begin{figure}[h!]
\begin{tikzpicture}
\matrix[column sep=0.5cm] {

\begin{scope}[grow=right, baseline]
\draw (-1,0) circle (1);
\draw (-3,0)--(-2,0);
\draw (0,0)--(1,0);
\draw [ball color=white] (-.5,.866) circle (0.55mm);
\draw (-2.6,-0.25) node {$\tiny{\sigma-1}$};
\draw (0.6,-0.25) node {$\tiny{\sigma+1}$};
\draw (-1,-1.25) node {$\tiny{\sigma-1}$};
\draw (0.1,.55) node {$\tiny{2}$};
\draw (-1.65,1.05) node {$\tiny{0}$};
\end{scope}

\\};
\end{tikzpicture}
\caption{The shape of $\psi_{\ell \ell}$ on $\gamma_h$, when $\sigma$ is odd.}
\label{Fig:PermissibleShape2}
\end{figure}
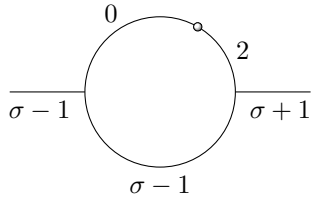

\noindent If $\sigma$ is even, then $\psi_{\ell \ell}$ is $\delta$-permissible on two consecutive loops $\gamma_h$ and $\gamma_{h+1}$, as shown in Figure \ref{Fig:PermissibleShape3}.
\begin{figure}[h!]
\begin{tikzpicture}
\matrix[column sep=0.5cm] {

\begin{scope}[grow=right, baseline]
\draw (-1,0) circle (1);
\draw (-3,0)--(-2,0);
\draw (0,0)--(1,0);
\draw [ball color=white] (-.5,.866) circle (0.55mm);
\draw (-2.6,-0.25) node {$\tiny{\sigma-2}$};
\draw (0.5,-0.25) node {$\tiny{\sigma}$};
\draw (-1,-1.25) node {$\tiny{\sigma-2}$};
\draw (.1,.55) node {$\tiny{2}$};
\draw (-1.65,1.05) node {$\tiny{0}$};
\draw (2,0) circle (1);
\draw (3,0)--(4,0);
\draw [ball color=white] (2.5,.866) circle (0.55mm);
\draw (3.6,-0.25) node {$\tiny{\sigma+2}$};
\draw (2,-1.25) node {$\tiny{\sigma}$};
\draw (3.1,.55) node {$\tiny{2}$};
\draw (1.35,1.05) node {$\tiny{0}$};
\end{scope}

\\};
\end{tikzpicture}
\caption{Ths shape of $\psi_{\ell \ell}$ on $\Gamma_{[h,h+1]}$, when $\sigma$ is even.}
\label{Fig:PermissibleShape3}
\end{figure}
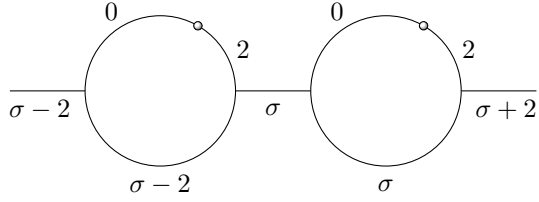
We choose $h$ so that $\gamma_h^\circ$ is the leftmost loop on which $\psi_{\ell \ell}$ is $\delta$-permissible.  We will use $v_h$ as a point of reference for the remaining calculations in the proof of the proposition.  (The values of $\psi_{ij}$ and $\psi_{\ell \ell}$ at every point in $\Gamma_{[a,b]}$ are determined by the shape computations above and the values at $v_h$.)

For the permissible functions $\psi_{ij}$ with $i < \ell < j$, the slopes along the bridges and bottom edges are independent of $(i,j)$.  One then computes directly that
\begin{equation} \label{first}
\psi_{ij}(q_k) - \psi_{i'j'}(q_k) = \psi_{ij}(v_h) - \psi_{i'j'}(v_h) + (i' - i) m_k.
\end{equation}
Similarly, one computes
\begin{equation} \label{second}
\psi_{ij}(q_h) - \psi_{\ell \ell}(q_h) = \psi_{ij}(v_h) - \psi_{\ell \ell}(v_h) + (r+ s - i - 1 - \sigma_h(\ell)) \cdot m_h,
\end{equation}
and, when $\sigma$ is even,
\begin{equation}  \label{third}
\psi_{ij}(q_{h+1}) - \psi_{\ell \ell}(q_{h+1}) = \psi_{ij}(v_h) - \psi_{\ell \ell}(v_h) + (r + s - i - 1 - \sigma_{h+1}(\ell)) \cdot m_{h+1} + m_h.
\end{equation}

We use these expressions, together with the tropical dependence hypothesis (our standing assumption that $\min \{ \psi_{ij}(v) \ | \ (i,j) \in \cA \}$ occurs at least twice at every point) to produce a linear relation with small integer coefficients among the bottom edge lengths $m_{a}, \ldots, m_{b}$, as follows.

\medskip

Let $\cA' \subset \cA$ be the set of pairs $(i,j)$ such that $\psi_{ij}$ is $\delta$-permissible on $\Gamma_{[a, b]}$.  We now build a graph whose vertices are the pairs $(i,j) \in \cA'$, and whose edges record the pairs that achieve the minimum at $v_h$ and the points $q_k$.  Say $\psi_{i_0j_0}$ and $\psi_{i'_0j'_0}$ achieve the minimum at $v_h$.  Then we add an edge $e_0$ from $(i_0,j_0)$ to $(i_0',j_0')$ in the graph.  Associated to this edge, we have the equation
\begin{equation}
\psi_{i_0j_0}(v_h) - \psi_{i_0'j_0'}(v_h) = 0. \tag{$E_0$}
\end{equation}

Next, for $a \leq k \leq b$, say $\psi_{i_k j_k}$ and $\psi_{i_k 'j_k'}$ achieve the minimum at $q_k$.  Then we add an edge $e_k$  from $(i_k, j_k)$ to $(i_k', j_k')$ and, associated to this edge, we have the equation
\begin{equation}
\psi_{i_k j_k}(v_h) - \psi_{i_k' j_k'}(v_h) = \alpha_k m_k + \lambda_k m_{k-1}, \tag{$E_k$}
\end{equation}
where $\alpha_k$ and $\lambda_k$ are small positive integers determined by the formula (\ref{first}), (\ref{second}), or (\ref{third}), according to whether one of the pairs is equal to $(\ell, \ell)$ and, if so, whether $k$ is equal to $h$ or $h+1$.  Note that, in every case, $\alpha_k$ is nonzero.

The graph now has $b - a + 2$ edges and, by hypothesis, it has at most $b - a + 2$ vertices.  Therefore, it must contain a loop.  If the edges $e_{k_1}, \ldots e_{k_t}$ form a loop then we can take a linear combination of the equations $E_{k_1}, ..., E_{k_t}$, each with coefficient $\pm 1$, so that the left hand sides add up to zero.  This gives a linear relation among the bottom edge lengths $m_{k_1}, \ldots, m_{k_t}$, with small integer coefficients.  Furthermore, if $k_t > k_j$ for all $j \neq t$, then $m_{k_t}$ appears with nonzero coefficient in $E_{k_t}$, and does not appear in $E_{k_j}$ for $j < t$, so this linear relation is nontrivial.  Finally, note that $\vert \alpha_k \vert \leq r+s \leq g$ for all $k$, and $\lambda_k$ is either 0 or 1, so the coefficient of each edge length $m_k$ is an integer of absolute value less than or equal to $g+1$.  This contradicts the hypothesis that $\Gamma$ has admissible edge lengths, and proves the proposition.
\end{proof}

\section{Proof of Theorem~\ref{thm:mainthm} for $\rho(g,r,d) = 0$}  \label{sec:rhozero}

In this section, we continue with the assumption from \S\ref{Sec:Excess} that $\rho(g,r,d) = 0$ and prove Conjecture~\ref{Conj:TropicalMRC} for $m = 2$, applying an inductive argument that relies on Lemma~\ref{Lem:Counting} and Proposition~\ref{Prop:ConsecutiveBridges} in the inductive step.  The case $\rho(g,r,d) > 0$ is handled by a minor variation on these arguments in \S\ref{sec:rhopos}.

\begin{remark}
Wang has recently shown that the maximal rank conjecture for $m = 2$ follows from the special case where $\rho(g,r,d) = 0$ \cite{Wang15}.  Our proof of Theorem~\ref{thm:mainthm} does not rely on this reduction.  We prove Conjecture~\ref{Conj:TropicalMRC} for $m = 2$ and arbitrary $\rho(g,r,d)$.
\end{remark}

We separate the argument into two cases, according to whether or not $\mu_2$ is injective.  The following identity is used to characterize the range of cases in which $\mu_2$ is injective and to count the set $\cA$ that we define in the remaining cases.

\begin{lemma} \label{lem:identity}
Suppose $s \leq r$.  Then
\[
{r+2 \choose 2} - {r-s \choose 2} + {s \choose 2} = 2d - g +1.
\]
\end{lemma}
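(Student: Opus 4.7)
The statement is a polynomial identity, so the plan is to reduce it to a direct computation using the relations $s = g - d + r$ and (equivalent to $\rho(g,r,d)=0$) $g = (r+1)s$. From these two relations one obtains $d = g + r - s = rs + r$, so the right-hand side becomes
\[
2d - g + 1 = 2rs + 2r - (r+1)s + 1 = rs + 2r - s + 1.
\]
I would then expand the left-hand side, giving
\[
\binom{r+2}{2} - \binom{r-s}{2} + \binom{s}{2} = \tfrac{1}{2}\bigl[(r+2)(r+1) - (r-s)(r-s-1) + s(s-1)\bigr],
\]
and check that this simplifies to the same expression $rs + 2r - s + 1$.

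The only mild subtlety is that the binomial $\binom{r-s}{2}$ should be interpreted via the polynomial $\tfrac{1}{2}(r-s)(r-s-1)$ so that the identity makes sense even if $r = s$ or $r < s$; no case analysis is needed. Expanding yields
\[
(r+2)(r+1) - (r-s)(r-s-1) + s(s-1) = (r^2+3r+2) - (r^2 - 2rs + s^2 - r + s) + (s^2 - s) = 4r + 2 + 2rs - 2s,
\]
and dividing by $2$ produces $rs + 2r - s + 1$, matching the right-hand side.

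There is no real obstacle here; the lemma is a book-keeping identity that will be used in the next section to compare $\binom{r+2}{2}$ with $2d - g + 1$ (the dimensions of the source and target of $\mu_2$) and thereby distinguish the injective and surjective regimes. I would simply present the computation above in two short displayed equations after substituting $g = (r+1)s$ and $d = rs + r$.
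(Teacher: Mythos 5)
Your proof is correct and takes essentially the same approach as the paper, which simply says to expand the left-hand side, collect terms, and apply the identities $s = g-d+r$ and $g = (r+1)s$; you have just carried out that computation explicitly. The algebra checks out: both sides reduce to $rs + 2r - s + 1$.
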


\begin{proof}
The lemma follows from a series of algebraic manipulations. Expand the left hand side as a polynomial in $r$ and $s$, collect terms, and apply the identities $s = g-d+r$ and $g = (r+1)s$.
\end{proof}

It follows immediately from Lemma~\ref{lem:identity} that
\[
{r+2 \choose 2} \leq 2d-g+1 \mbox{ \ \ if and only if \ \ } r -s \leq s.
\]
In particular, the maximal rank conjecture predicts that $\mu_2$ is injective for a general linear series on a general curve exactly when $r \leq 2s$.   We now proceed with the proof that $\{ \psi_{ij} \ | \ 0 \leq i \leq j \leq r\}$ is tropically independent in the injective case.

\bigskip

\begin{proof}[Proof of Conjecture~\ref{Conj:TropicalMRC} for $m = 2$, $\rho(g,r,d) = 0$, and $r \leq 2s$]

We must show that the set of functions $\{ \psi_{ij} \ | \ 0 \leq i \leq j \leq r \}$ is tropically independent.  Suppose not.  Then there are constants $b_{ij}$ such that the minimum
\[
\theta (v) = \min_{ij} \psi_{ij}(v) + b_{ij}
\]
occurs at least twice at every point $v \in \Gamma$.  We continue with the notation from \S\ref{Sec:Excess}, setting
\[
\Delta = 2D + \ddiv \theta, \ \ \delta_i = \deg(\Delta)|_{\gamma_i}, \ \mbox { and } \ \ e(k) = \delta_0 + \cdots + \delta_k - 2k.
\]
As described above, our strategy is to bound the excess degree function $e(k) = \delta_0 + \cdots + \delta_k - 2k$ inductively, moving from left to right across the graph.

More precisely, we claim that
\begin{equation}
e(\ell s) \geq \ell -s + \rr \ \mbox{ for } \ \ \ell \leq r.  \label{claim1}
\end{equation}
 We prove this claim by induction on $\ell$, using Lemma~\ref{Lem:Counting} and Proposition~\ref{Prop:ConsecutiveBridges}.  To see that the theorem follows from the claim, note that the claim implies that
\[
\deg(\Delta) \geq 2g + r-s + \rr + 2.
\]
Since $d = g + r -s$, this gives $\deg(\Delta) \geq 2d + s - \lfloor \frac{r}{2} \rfloor + 2$, a contradiction, since $r \leq 2s$.  It remains to prove the claim (\ref{claim1}).

The claim is clear for $\ell = 1$, since $e(k) \geq \delta_0 \geq 2$ for all $k$ and $\rr \leq s$, by assumption.  We proceed by induction on $\ell$.  Assume that $\ell < 2r - s - 1 - \rr$ and
$$ e(\ell s) \geq \ell -s + \rr. $$
We must show that $e((\ell + 1) s) \geq \ell -s + \rr + 1$.  If $e(\ell s) > \ell - s + \rr$ then there is nothing to prove, since $e$ is nondecreasing.  It remains to rule out the possibility that $e(\ell s) = e((\ell + 1) s) = \ell - s + \rr$.

Suppose that $e(\ell s) = e((\ell + 1) s) = \ell - s + \rr$.  Fix $a = a(\ell)$ and $b = b(\ell)$ as in \S\ref{Sec:Excess}.  By Lemma~\ref{Lem:Counting}, if $\psi_{ij}$ is $\delta$-admissible on $\Gamma_{[a, b]}$ then either $i = j = \ell$ or $i < \ell < j$ and $i + j = \ell + \rr$.  We consider two cases and use Proposition~\ref{Prop:ConsecutiveBridges} to reach a contradiction in each case.

\medskip

\noindent \emph{Case 1:} If $1 \leq \ell \leq \rr$ then there are exactly $\ell + 1$ possibilities for $i$, and $j$ is uniquely determined by $i$.  In this case $b - a  = \ell + s - \rr -1$.  Since $r \leq 2s$, this implies that the number of $\delta$-permissible functions is at most $b - a + 2$, which contradicts Proposition~\ref{Prop:ConsecutiveBridges}, and the claim follows.

\medskip

\noindent \emph{Case 2:} If $\rr < \ell < r$ then there are exactly $r - \ell + 1$ possibilities for $j$, and $i$ is uniquely determined by $j$.  In this case, $b - a = s - \ell + \rr + 1$, which is at least $r - \ell - 1$, since $r \leq 2s$.  Therefore, the number of $\delta$-permissible functions on $\Gamma_{[a, b]}$ is at most $b - a + 2$, which contradicts Proposition~\ref{Prop:ConsecutiveBridges}, and the claim follows.
\medskip

\noindent This completes the proof of Conjecture~\ref{Conj:TropicalMRC} (and hence Theorem~\ref{thm:mainthm}) in the case where $m = 2$, $\rho(g,r,d) = 0$, and $r \leq 2s$.
\end{proof}

Our proof of Conjecture~\ref{Conj:TropicalMRC} for $m = 2$, $\rho(g,r,d) = 0$, and $r > 2s$ is similar to the argument above, bounding the excess degree function $e(\ell s)$ by induction on $\ell$, with Lemma~\ref{Lem:Counting} and Proposition~\ref{Prop:ConsecutiveBridges} playing a key role in the inductive step.  The one essential new feature is that we must specify the subset $\cA$.  The description of this set, and the argument that follows, depend in a minor way on the parity of $r$, so we fix
\[
\epsilon(r) = \left\{ \begin{array}{ll} 0 & \mbox{if $r$ is even,}\\
						    1 & \mbox{if $r$ is odd.} \end{array} \right.
\]

\medskip

Let $\cA$ be the subset of the integer points in the triangle $0 \leq i \leq j \leq r$ that are not in any of the following three regions:
\begin{enumerate}
\item the half-open triangle where $j \geq i + 2$ and $i + j < r-2s + \epsilon(r)$,
\item the half-open triangle where $j \geq i + 2$ and $i + j > r + 2s$,
\item the closed chevron where $r-s + \epsilon(r) \ \leq \ i + j \leq r+s$, and either $$i \leq \frac{1}{2}(r -2s-2+ \epsilon(r)) \mbox{ \ \ or \ \ }j \geq \frac{1}{2}(r+2s+2).$$
\end{enumerate}
Figure~\ref{Fig:A} illustrates the case $g = 36$, $r=11$, $d = 44$, and $s = 3$.  The points of $\cA$ are marked with black dots, the three regions are shaded gray, and the omitted integer points are marked with white circles.

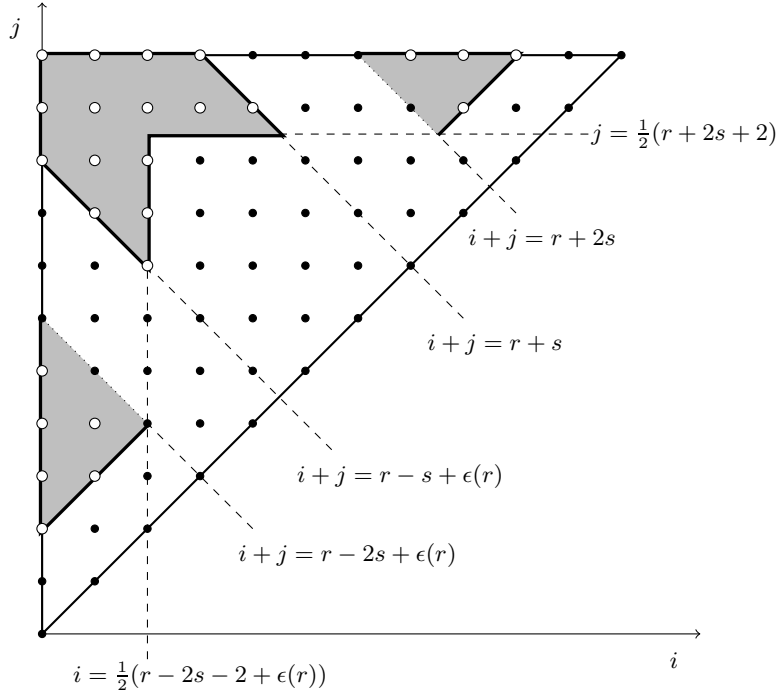
\begin{figure}[h!]
\begin{tikzpicture}[scale=0.7]


\draw[->] (0,0)--(12.5,0);
\draw[->] (0,0)--(0,12);
\draw [thick] (0,0)--(11,11)--(0,11)--(0,0);

\draw [line width=2.5] (0,6)--(0,2)--(2,4);

\draw [line width=2.5] (7.5,9.5)--(9,11)--(6,11);

\draw [line width=2.5] (0,11)--(0,9)--(2,7)--(2,9.5)--(4.5,9.5)--(3,11)--(0,11);

\fill[lightgray] (0,2) -- (2,4) -- (0,6) -- cycle;
\fill[lightgray] (0,9)--(2,7)--(2,9.5)--(4.5,9.5)--(3,11) --(0,11)--cycle;
\fill[lightgray]  (6,11)--(7.5,9.5)--(9,11)--cycle;

\draw [fill=white] (0,2) circle [radius=0.1];
\draw [fill=white] (0,3) circle [radius=0.1];
\draw [fill=white] (0,4) circle [radius=0.1];
\draw [fill=white] (0,5) circle [radius=0.1];	
\draw [fill=white] (1,3) circle [radius=0.1];	
\draw [fill=white] (1,4) circle [radius=0.1];

\draw [fill=white] (0,9) circle [radius=0.1];	
\draw [fill=white] (0,10) circle [radius=0.1];	
\draw [fill=white] (0,11) circle [radius=0.1];
	
\draw [fill=white] (1,8) circle [radius=0.1];			
\draw [fill=white] (1,9) circle [radius=0.1];	
\draw [fill=white] (1,10) circle [radius=0.1];	
\draw [fill=white] (1,11) circle [radius=0.1];	

\draw [fill=white] (2,7) circle [radius=0.1];	
\draw [fill=white] (2,8) circle [radius=0.1];	
\draw [fill=white] (2,9) circle [radius=0.1];	
\draw [fill=white] (2,10) circle [radius=0.1];	
\draw [fill=white] (2,11) circle [radius=0.1];	

\draw [fill=white] (3,10) circle [radius=0.1];	
\draw [fill=white] (3,11) circle [radius=0.1];
	
\draw [fill=white] (4,10) circle [radius=0.1];	

\draw [fill=white] (7,11) circle [radius=0.1];	
\draw [fill=white] (8,10) circle [radius=0.1];	
\draw [fill=white] (8,11) circle [radius=0.1];	
\draw [fill=white] (9,11) circle [radius=0.1];

\draw [fill=black] (0,0) circle [radius=0.07];	
\draw [fill=black] (0,1) circle [radius=0.07];
\draw [fill=black] (0,6) circle [radius=0.07];	
\draw [fill=black] (0,7) circle [radius=0.07];	
\draw [fill=black] (0,8) circle [radius=0.07];	
	
\draw [fill=black] (1,1) circle [radius=0.07];	
\draw [fill=black] (1,2) circle [radius=0.07];	
\draw [fill=black] (1,5) circle [radius=0.07];	
\draw [fill=black] (1,6) circle [radius=0.07];	
\draw [fill=black] (1,7) circle [radius=0.07];	

\draw [fill=black] (2,2) circle [radius=0.07];		
\draw [fill=black] (2,3) circle [radius=0.07];	
\draw [fill=black] (2,4) circle [radius=0.07];	
\draw [fill=black] (2,5) circle [radius=0.07];	
\draw [fill=black] (2,6) circle [radius=0.07];	

\draw [fill=black] (3,3) circle [radius=0.07];	
\draw [fill=black] (3,4) circle [radius=0.07];	
\draw [fill=black] (3,5) circle [radius=0.07];	
\draw [fill=black] (3,6) circle [radius=0.07];	
\draw [fill=black] (3,7) circle [radius=0.07];	
\draw [fill=black] (3,8) circle [radius=0.07];	
\draw [fill=black] (3,9) circle [radius=0.07];	

\draw [fill=black] (4,4) circle [radius=0.07];
\draw [fill=black] (4,5) circle [radius=0.07];
\draw [fill=black] (4,6) circle [radius=0.07];
\draw [fill=black] (4,7) circle [radius=0.07];
\draw [fill=black] (4,8) circle [radius=0.07];
\draw [fill=black] (4,9) circle [radius=0.07];
\draw [fill=black] (4,11) circle [radius=0.07];

\draw [fill=black] (5,5) circle [radius=0.07];
\draw [fill=black] (5,6) circle [radius=0.07];
\draw [fill=black] (5,7) circle [radius=0.07];
\draw [fill=black] (5,8) circle [radius=0.07];
\draw [fill=black] (5,9) circle [radius=0.07];
\draw [fill=black] (5,10) circle [radius=0.07];
\draw [fill=black] (5,11) circle [radius=0.07];

\draw [fill=black] (6,6) circle [radius=0.07];
\draw [fill=black] (6,7) circle [radius=0.07];
\draw [fill=black] (6,8) circle [radius=0.07];
\draw [fill=black] (6,9) circle [radius=0.07];
\draw [fill=black] (6,10) circle [radius=0.07];
\draw [fill=black] (6,11) circle [radius=0.07];

\draw [fill=black] (7,7) circle [radius=0.07];
\draw [fill=black] (7,8) circle [radius=0.07];
\draw [fill=black] (7,9) circle [radius=0.07];
\draw [fill=black] (7,10) circle [radius=0.07];

\draw [fill=black] (8,8) circle [radius=0.07];
\draw [fill=black] (8,9) circle [radius=0.07];

\draw [fill=black] (9,9) circle [radius=0.07];
\draw [fill=black] (9,10) circle [radius=0.07];

\draw [fill=black] (10,10) circle [radius=0.07];
\draw [fill=black] (10,11) circle [radius=0.07];

\draw [fill=black] (11,11) circle [radius=0.07];

\node at (5.8,1.5) {\small $i+j= r-2s +\epsilon(r)$};
\draw[dashed] (4,2)--(2,4);
\draw[dotted] (2,4)--(0,6);

\node at (6.8,3) {\small $i+j= r-s +\epsilon(r)$};
\draw[dashed] (5.5,3.5)--(2.1,6.9);

\node at (8.6,5.5) {\small $i+j=r+s$};
\draw[dashed] (8,6)--(4.5,9.5);

\node at (9.5,7.5) {\small $i+j=r+2s$};
\draw[dashed] (9,8)--(7.5,9.5);
\draw[dotted] (7.5,9.5)--(6,11);

\node at (12,-0.5) {\small $i$};
\node at (-0.5,11.5) {\small $j$};

\node at (3,-0.8) {\small $i = \frac{1}{2}(r -2s-2+ \epsilon(r))$};
\draw[dashed] (2,6.9)--(2,-0.5);

\node at (12.2,9.5) {\small $j = \frac{1}{2}(r+2s+2)$};
\draw[dashed] (4.5,9.5)--(10.5,9.5);

\end{tikzpicture}
\caption{Points in the set $\cA$ are marked by black dots.  The integer points in the triangle $0 \leq i \leq j \leq r$ that are omitted from $\cA$ are marked with white circles.}
\label{Fig:A}
\end{figure}

\begin{remark}
There are many possible choices for $\cA$, as one can see even in relatively simple cases, such as Example~\ref{Ex:QuadricRelation}.  We present one particular choice that works uniformly for all $g$, $r$, and $d$.  (In the situation of Example~\ref{Ex:QuadricRelation}, the two half-open triangles are empty, and the closed chevron contains a single integer point, namely $(0,3)$.)  The essential property for the purposes of our inductive argument is the number of points $(i,j)$ in $\cA$, with $i \neq j$, on each diagonal line $i + j = k$, for $0 \leq k \leq 2r$.  The argument presented here works essentially verbatim for any other subset of the integer points in the triangle with this property, and can be adapted to work somewhat more generally.  We have made no effort to characterize those subsets that are tropically independent, since producing a single such subset is sufficient for the proof of Theorem~\ref{thm:mainthm}.
\end{remark}

\begin{remark}
Our choice of $\cA$, suitably intepreted, works even in the injective case.  When $r - s \leq s$, the shaded regions are empty, since the half space $i \leq \frac{1}{2}(r -2s-2+ \epsilon(r))$ lies entirely to the left of the triangle $0 \leq i \leq j \leq r$, and the half space $j \geq \frac{1}{2}(r+2s+2)$ lies above it.
\end{remark}

We now verify that the set $\cA$ described above has the correct size.

\begin{lemma}
The size of $\cA$ is
\[
\# \cA = 2d-g+1.
\]
\end{lemma}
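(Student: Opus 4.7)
The triangle $T = \{(i,j) \in \ZZ^2 : 0 \le i \le j \le r\}$ contains exactly $\binom{r+2}{2}$ lattice points, so by Lemma~\ref{lem:identity} it suffices to show that the three omitted regions $R_1, R_2, R_3$ from conditions (1), (2), and (3) satisfy $|R_1| + |R_2| + |R_3| = \binom{r-s}{2} - \binom{s}{2}$. Pairwise disjointness is immediate, since the values of $i+j$ on the three regions lie in the disjoint intervals $(-\infty,\, r-2s+\epsilon(r))$, $[r-s+\epsilon(r),\, r+s]$, and $(r+2s,\, \infty)$, so the plan is to compute each of $|R_1|, |R_2|, |R_3|$ separately and then sum.

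The key tool is the involution $\tau \colon T \to T$, $\tau(i,j) = (r-j, r-i)$, which preserves the condition $j \ge i + 2$ and sends the diagonal $i + j = k$ to $i' + j' = 2r - k$. For each $k \in \{2, \ldots, r - 2s + \epsilon(r) - 1\}$, the diagonal $i + j = k$ contributes $\lfloor k/2 \rfloor$ points to $R_1$; summing (and using that $r - 2s + \epsilon(r)$ is always even) gives $|R_1| = M(M-1)$ with $M = \lceil (r-2s)/2 \rceil$. Applying $\tau$ identifies $R_2$ with a subset of $R_1$: all of $R_1$ when $r$ is even, and $R_1$ minus the single layer $i + j = r - 2s$ when $r$ is odd, yielding $|R_2| = M(M-1)$ or $(M-1)^2$ respectively. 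For $R_3$, write $R_3 = L \cup R$ where $L$ consists of the chevron points with $i \le (r-2s-2+\epsilon(r))/2$ and $R$ consists of those with $j \ge (r+2s+2)/2$; parametrizing $L$ by $i \in \{0, \ldots, M-1\}$ (and $R$ by $j$ symmetrically) and computing $|L \cap R|$ as a small rectangle, inclusion-exclusion yields a closed form for $|R_3|$.

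Summing the three contributions and simplifying should recover $\binom{r-s}{2} - \binom{s}{2}$, completing the count. The main obstacle is the parity case split: in the even case the three regions are exchanged cleanly under $\tau$ and the bookkeeping is straightforward, but in the odd case several boundary layers fail to match exactly under $\tau$ and contribute extra terms that must be tracked carefully in order to produce a formula uniform in $\epsilon(r)$.
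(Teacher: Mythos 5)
Your approach is a genuinely different route from the paper's: you compute $|R_1|$, $|R_2|$, $|R_3|$ explicitly by diagonals and an involution, whereas the paper counts the union all at once by a scissors-congruence argument (it translates the two small triangles to join the convex hull of the chevron into a single triangle of side $r-s-2$, then subtracts the $\binom{s}{2}$ points in the convex hull but not the chevron). Both routes lead back to Lemma~\ref{lem:identity}. The pieces you do carry out are correct: the diagonal $i+j=k$ with $j\geq i+2$ and $k\leq r$ has exactly $\lfloor k/2\rfloor$ points, so $|R_1|=M(M-1)$ with $M=\tfrac{1}{2}(r-2s+\epsilon(r))$, and your involution $\tau(i,j)=(r-j,r-i)$ correctly identifies $R_2$ with $R_1$ minus the layer $i+j=r-2s$ when $r$ is odd, giving $|R_2|=(M-1)^2$ in that case and $M(M-1)$ when $r$ is even. (A quick check against the paper's Figure~\ref{Fig:A}, with $r=11$, $s=3$, gives $|R_1|=6$, $|R_2|=4$, $|R_3|=15$, totaling $25=\binom{8}{2}-\binom{3}{2}$, consistent.)

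However, the proposal as written has a genuine gap: the $|R_3|$ computation is only described, not carried out. You write that ``inclusion-exclusion yields a closed form'' without stating the form, and you conclude that summing the three pieces ``should recover'' $\binom{r-s}{2}-\binom{s}{2}$, while explicitly flagging that several boundary layers fail to match under $\tau$ in the odd case and must be tracked carefully. That is precisely the part of the argument that cannot be left unverified. In particular, the threshold $j\geq\tfrac{1}{2}(r+2s+2)$ is a half-integer when $r$ is odd, so the $j$-side of the chevron is parametrized by $M-1$ values rather than $M$, breaking the symmetry with the $i$-side; the band $r-s+\epsilon(r)\leq i+j\leq r+s$ is also truncated differently at its two ends. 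Until you write out $|R_3|$ (or at least $|L|$, $|R|$, and $|L\cap R|$) in both parities and verify the sum, this is an incomplete plan rather than a proof. The paper's scissors-congruence argument sidesteps exactly this bookkeeping, which is likely why it was chosen.
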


\begin{proof}
As shown in Figure~\ref{Fig:A}, moving the lower left triangle vertically and the upper right triangle horizontally by integer translations, we can assemble the shaded regions to form a closed triangle minus a half-open triangle.  These translations show that the two half-open triangles plus the convex hull of the chevron shape are scissors congruent to a triangle of side length $r -s -2$ that contains ${r-s \choose 2}$ integer points.  The difference between the chevron shape and its convex hull is a half-open triangle that contains ${s \choose 2}$ integer points.  Therefore, the shaded region contains exactly ${r-s \choose 2} - {s \choose 2}$ lattice points, and the proposition then follows from the identity in Lemma~\ref{lem:identity}.
\end{proof}

\begin{proof}[Proof of Conjecture~\ref{Conj:TropicalMRC} for $m = 2$, $\rho(g,r,d) = 0$, and $r > 2s$.]
We will show that $$\{ \psi_{ij} \ | \ (i,j) \in \cA \}$$ is tropically independent.  Suppose not.  Then there are constants $b_{ij}$ such that $\theta(v) = \min_{(i,j) \in \mathcal{A}} \{ \psi_{ij}(v) + b_{ij} \}$ occurs at least twice at every point $v$ in $\Gamma$.  Let
$$ \Delta = \ddiv(\theta) + 2D, \mbox{ \ \ } \delta_i = \deg(\Delta)|_{\gamma_i}, \mbox{ \ and \ } e(k) = \delta_0 + \cdots + \delta_k - 2k. $$

Note that $\deg_{w_0}(\Delta)$ is $2r - \sigma_0(\theta)$, where $\sigma_0(\theta)$ is the outgoing slope of $\theta$ at $w_0$.  Since the minimum is achieved twice at every point, this slope must agree with the slope $\sigma_0(\psi_{ij}) = 2r - i - j$ for at least two pairs $(i,j) \in \cA$.   The points in the half-open triangle where $j \geq i + 2$ and $i + j > r + 2s$ are omitted from $\cA$, so there is only one pair $(i,j) \in \cA$ such that $i + j = k$, for $k < r - 2s + \epsilon(r)$.  It follows that $\deg_{w_0}(\Delta) \geq r-2s + \epsilon(r)$.  Similarly, $\deg_{v_{g+1}}(\Delta) \geq r - 2s$.

We claim that
\begin{equation}
e(\ell s) \geq \ell - s + \rr \ \mbox{ for } \ \ell \leq \left \lfloor \frac{r}{2} \right \rfloor + s + 1.  \label{claim2}
\end{equation}
Note that the assumption $r > 2s$ implies that $ \left \lfloor \frac{r}{2} \right \rfloor + s + 1 \leq r$.  Since $e$ is a nondecreasing function of $k$, and $\deg_{v_{g+1}}(\Delta) \geq r - 2s$, the claim implies that
\[
\deg(\Delta) \geq 2g + \left (  \left \lfloor \frac{r}{2} \right \rfloor + s + 1 - s + \rr \right) + r - 2s .
\]
Collecting terms gives $\deg(\Delta) \geq 2g + 2r - 2s +1= 2d + 1$, a contradiction.

It remains to prove claim (\ref{claim2}).  Since $\delta_0 \geq r-2s+\epsilon (r)$, the claim holds for $\ell \leq \rr - s$.  We proceed by induction on $\ell$.  Assume that
$ e(\ell s) \geq \ell -s + \rr$ and $\ell \leq \left \lfloor \frac{r}{2} \right \rfloor + s$.
We must show that $e((\ell + 1) s \geq \ell -s + \rr + 1$.  If $e(\ell s) > \ell - s + \rr$ then there is nothing to prove, since $e$ is nondecreasing.  It remains to rule out the possibility that $e(\ell s) = e((\ell + 1) s) = \ell - s + \rr$.

Suppose $e(\ell s) = e((\ell + 1) s) = \ell - s + \rr$.  Fix $a=a(\ell)$ and $b=b(\ell)$ as in \S\ref{Sec:Excess}.  By Lemma~\ref{Lem:Counting}, if $\psi_{ij}$ is $\delta$-admissible on $\Gamma_{[a,b]}$ then either $i = j = \ell$ or $i < \ell < j$ and $i + j = \ell + \rr$.  We consider three cases.

\medskip

\noindent \emph{Case 1:}  If $\rr - s \leq \ell \leq \left \lfloor \frac{r}{2} \right \rfloor $ then there are $\rr - s$ pairs $(i,j)$ with $i \neq j$ and $i+j = \ell + \rr$ that are contained in the closed chevron and hence omitted from $\cA$.  This leaves $$ \ell + 1 - \rr + s = b-a + 2 $$ pairs $(i,j) \in \cA$ such that $\psi_{ij}$ is $\delta$-permissible on $\Gamma_{[a,b]}$.  We can then apply Proposition~\ref{Prop:ConsecutiveBridges}, and the claim follows.

\medskip

\noindent \emph{Case 2:}   If $\left \lfloor \frac{r}{2} \right \rfloor <  \ell < \frac{r}{2} + s$ then there are $\left \lfloor \frac{r}{2} \right\rfloor - s$ pairs $(i,j)$ with $i+j = \ell + \rr$ that are in the closed chevron and hence omitted from $\cA$.  This leaves
$$ r - \ell + 1 - \left \lfloor \frac{r}{2} \right \rfloor + s = b- a + 2 $$
pairs $(i,j) \in \cA$ such that $\psi_{ij}$ is $\delta$-permissible on $\Gamma_{[a,b]}$. We can then apply Proposition~\ref{Prop:ConsecutiveBridges}, and the claim follows.

\medskip

\noindent \emph{Case 3:}  If $\ell = \frac{r}{2} + s$, then there are $\frac{r}{2} - s$ pairs $(i,j)$ with $i+j = r+s$ that are contained in the closed chevron and hence omitted from $\cA$.  This leaves one pair $(i,j) \in \cA$ such that $\psi_{ij}$ has slope $r$ on the bridge $\beta_{(\frac{r}{2} + s)s}$.  It follows that $\theta$ cannot have slope $r$ at any point of this bridge.  If $e((\ell +1)s) \leq r$, however, then the inductive hypothesis implies that $e(\ell s) = e(\ell s+1) = r$, hence $\theta$ has constant slope $r$ on this bridge, a contradiction, and the claim follows.
\end{proof}

\begin{remark}
In Case 3 of the above argument, the formulas for $a(\ell)$ and $b(\ell)$ would give $a(\ell) = b(\ell) + 1$, so the subgraph $\Gamma_{[a(\ell), b(\ell)]}$ might be thought of as a chain of $b(\ell) - a(\ell) + 1 = 0$ loops.  The inductive step in this case is then an application of a degenerate version of Proposition~\ref{Prop:ConsecutiveBridges} for a chain of zero loops, i.e.\ for a single edge.  See also Remark~\ref{rem:zeroloops}.
\end{remark}

\section{Proof of Theorem~\ref{thm:mainthm} for $\rho(g,r,d) > 0$}  \label{sec:rhopos}

Fix $\rho = \rho(g,r,d)$, $g' = g - \rho$, and $d' = d -\rho$.  Let $\Gamma'$ be a chain of $g'$ loops with admissible edge lengths.  Note that $\rho(g', r, d') = 0$. Therefore, the constructions in \S\ref{Sec:Excess}-\ref{sec:rhozero} produce a divisor $D'$ on $\Gamma'$ of rank $r$ and degree $d'$ whose class is vertex avoiding, together with a set $\cA'$ of integer points $(i,j)$ with $0 \leq i \leq j \leq r $ of size
\[
\# \cA' \ = \ \min \left \{ {r + 2 \choose 2}, 2d' - g' + 1 \right \} \ = \ \min \left \{ {r + 2 \choose 2}, 2d - g + 1 - \rho \right \},
\]
such that the collection of piecewise linear functions $\{ \psi'_{ij} \in R(D') \ | \ (i,j) \in \cA' \}$ is tropically independent.

We use $\Gamma'$, $D'$, and $\cA'$ as starting points to construct a chain of $g$ loops with admissible edge lengths $\Gamma$, a divisor $D$ of degree $d$ and rank $r$ whose class is vertex avoiding, and a set $\cA$ of size $\# \cA = \min \left \{ {r+2 \choose 2}, 2d - g + 1 \right \}$ such that $\{ \psi_{ij} \in R(D) \ | \ (i,j) \in \cA \}$ is tropically independent.  Note that
\[
g = g' + \rho, \ \ \ d = d' + \rho, \ \ \mbox{ and } \ \ \# \cA  - \# \cA' = \min \left \{ \rho, {r + 2 \choose 2} - \# \cA' \right \}.
\]

\begin{proof}[Proof of Conjecture~\ref{Conj:TropicalMRC} for $m = 2$, $\rho(g,r,d) > 0$, and ${r + 2 \choose 2} \geq 2d - g + 1$.]

We construct $\Gamma$, $D$ and $\cA$ by adding $\rho$ new loops to $\Gamma'$, $\rho$ new points to $D'$, and $\rho$ new points to $\cA'$.  Any collection of $\rho$ points in the complement of $\cA'$ will work, but the location of the new loops added to $\Gamma'$ depends on the set $\cA  \smallsetminus \cA'$.

Recall that the complement of $\cA'$ consists of the integer points in the closed chevron, the lower left half-open triangle, and the upper right half-open triangle, as shown in Figure~\ref{Fig:A}.  Suppose $\cA \smallsetminus \cA'$ consists of $\nu$ new points in the chevron, $\nu_1$ new points in the lower left half-open triangle, and $\nu_2$ new points in the upper right half-open triangle.  Then construct $\Gamma$ from $\Gamma'$ by adding $\nu_1$ new loops to the left end of $\Gamma'$, $\nu_2$ new loops to the right end of $\Gamma'$, and $\nu$ new loops in the middle of the chain, at locations that are specified as follows.

For $\rr - s \leq \ell <  \rr + s$, let $a(\ell)$ and $b(\ell)$ be as defined in \S\ref{Sec:Excess}.  For each new element $(i,j)$ from the chevron, we add a corresponding loop to the end of the subgraph $\Gamma'_{[a(\ell), b(\ell)]}$, where $\ell$ is the unique integer such that $i + j = \ell + \rr$.  In other words, if there are $t$ points $(i,j)$ in $\cA \smallsetminus \cA'$ such that $i + j = \ell + \rr$, we add $t$ new loops immediately to the right of the $b(\ell)$th loop in $\Gamma'$.

Let $\alpha(k)$ denote the number of new points $(i,j) \in \cA \smallsetminus \cA'$ such that $i + j \leq k$.  We construct our divisor $D'$ so that it has one chip on each of the new loops.  The new loops correspond to lingering steps in the associated lattice path, and the location of the points on the new loops are chosen in specific regions on the top edges, as described below, and sufficiently general so that the class of $D'$ is vertex avoiding.

Just as in \S\ref{Sec:Excess}-\ref{sec:rhozero}, we suppose that $\{ \psi_{ij} \ | \ (i,j) \in \cA\}$ is tropically dependent, choose constants $b_{ij}$ such that the minimum
\[
\theta(v) = \min_{ij} \{ \psi_{ij}(v) + b_{ij}) \}
\]
occurs at least twice at every point $v$ in $\Gamma$, and fix
$$ \Delta = \ddiv(\theta) + 2D, \mbox{ \ \ } \delta_i = \deg(\Delta)|_{\gamma_i}, \mbox{ \ \ } e(k) = \delta_0 + \cdots + \delta_k - 2k. $$  We again fix $s = g - d + r$, which is the same as $g' - d' + r$.  We claim that
\begin{enumerate}
\item $\delta_0 + \cdots + \delta_{\nu_1} \geq r - 2s + \epsilon(r)$,  \label{claim:triangle}
\item $e( \ell s + \alpha(\ell + \rr)) \geq \ell - s + \rr \mbox{ for } \left \lfloor \frac{r}{2} \right \rfloor -s + \epsilon(r) \leq \ell \leq \left \lfloor \frac{r}{2} \right \rfloor + s + 1$, \label{claim:chevron}
\item $\delta_{g - \nu_2 + 1} + \cdots + \delta_{g+1} \geq r - 2s.$  \label{claim:triangle2}
\end{enumerate}
Just as in the proof for $\rho = 0$ and $r > 2s$, the claim implies that $\deg(\Delta) \geq 2d + 1$, which is a contradiction.  It remains to prove the claim, which we do inductively, moving from left to right across the graph.

\medskip

To prove (\ref{claim:triangle}), we show that $e(\alpha(k)) \geq k$, for $0 \leq k \leq r - 2s + \epsilon(r)$.
For $k = 0$, there is nothing to prove, and we proceed by induction on $k$. Let $a = \alpha(k) + 1$ and $b = \alpha(k+1)$.  As in the $\rho = 0$ case, we must rule out the possibility that $e(a) = e(b) = k$.  Just as in Lemma~\ref{Lem:Counting}, if $e(a) = e(b) = k$ then the $\delta$-permissible functions on $\Gamma_{[a,b]}$ are exactly those $\psi_{ij}$ such that $i + j = k$.  We choose the location of the new points on the loops in $\Gamma_{[a,b]}$ so that the functions $\psi_i$ for $i \leq \frac{k}{2}$ have the combinatorial shape shown in Figure~\ref{Fig:LeftHalf}, on each loop in $\Gamma_{[a,b]}$, and those for $i > \frac{k}{2}$ have the combinatorial shape shown in Figure~\ref{Fig:RightHalf}.  It follows that each $\delta$-permissible $\psi_{ij}$ has the combinatorial shape shown in Figure~\ref{Fig:PermissibleShape}.  By construction, there are exactly $ b - a + 2$ pairs $(i,j) \in \cA$ such that $i + j = k$.  Then, just as in Proposition~\ref{Prop:ConsecutiveBridges}, we conclude that $e(b) \geq k + 1$, which proves (\ref{claim:triangle}).  (The argument in this case is somewhat simpler than in Proposition~\ref{Prop:ConsecutiveBridges}, since the combinatorial shapes appearing in Figures~\ref{Fig:PermissibleShape2} and \ref{Fig:PermissibleShape3} do not occur.)  The proof of (\ref{claim:triangle2}) is similar.

\medskip

It remains to prove (\ref{claim:chevron}).  Note that (\ref{claim:chevron}) follows from (\ref{claim:triangle}) for $\ell = \left \lfloor \frac{r}{2} \right \rfloor -s + \epsilon(r)$.  We proceed by induction on $\ell$.  Let $a = a(\ell) + \alpha(\ell + \rr - 1)$ and let $b = b(\ell) + \alpha(\ell + \rr)$. As in the $\rho = 0$ case, it suffices to rule out the possibility that $e(a) = e(b) = \ell - s + \rr$.

Suppose $e(a) = e(b) = \ell - s + \rr$.  Then, just as in Lemma \ref{Lem:Counting}, if $\psi_{ij}$ is $\delta$-permissible on $\Gamma_{[a,b]}$, then either $i = j = \ell$ or $i < \ell < j$ and $i + j = \ell + \rr$.  We choose the location of the points on the new loops in $\Gamma_{[a,b]}$ so that $\psi_{ij}$ has the shape shown in Figure~\ref{Fig:PermissibleShape} for $i < \ell < j$.  Then, just as in Proposition~\ref{Prop:ConsecutiveBridges}, it follows that there must be at least $b - a + 3$ functions that are $\delta$-permissible on $\Gamma_{[a,b]}$.  However, by construction, there are only $b -a + 2$ functions that are $\delta$-permissible on $\Gamma_{[a,b]}$, a contradiction.  We conclude that $e(b) > \ell - s + \rr$, as required.  This completes the proof of the claim, and the theorem follows.
\end{proof}

\begin{remark}
The analogue of (\ref{claim:triangle}) in the case $\rho(g,r,d) = 0$ and $r > 2s$ is the lower bound $\delta_0 \geq r - 2s + \epsilon(r)$ which comes from having only one pair $(i,j) \in \cA$ such that $\psi_{ij}$ has a given slope $\sigma$ at $w_0$, for $0 \leq \sigma < r - 2s + \epsilon(r)$.  This bound may be seen as coming from $r - 2s + \epsilon(r)$ applications of the degenerate version of Proposition~\ref{Prop:ConsecutiveBridges} for a chain of zero loops, i.e.\ a single edge.  As we add points to $\cA$ and add loops to the left of $w_0$, these chains of zero loops become actual chains of loops, and we then use the usual version of Proposition~\ref{Prop:ConsecutiveBridges}.  A similar remark applies to (\ref{claim:triangle2}).
\end{remark}

\begin{proof}[Proof of Conjecture~\ref{Conj:TropicalMRC} for $m = 2$, $\rho(g,r,d) > 0$, and ${r + 2 \choose 2} \leq 2d -g + 1$.]

Again, it suffices to construct a divisor $D$ on $\Gamma$ of rank $r$ and degree $d$ whose class is vertex avoiding such that all of the functions $\psi_{ij}$ are tropically independent.  Let
$$ \eta = \min \left \{ \rho , 2d-g+1-{{r+2}\choose{2}} \right \} . $$
By the arguments in the preceding case, on the chain of $g-\eta$ loops with bridges, there exists a vertex avoiding divisor $D'$ of rank $r$ and degree $d-\eta$ such that the functions $\psi_{ij}$ are tropically independent.  We construct a divisor $D$ on $\Gamma$ of rank $r$ and degree $d$ by specifying that $D\vert_{\Gamma_{[0,g-\eta]}} = D'$, and the remaining $\eta$ steps of the corresponding lattice path are all lingering, with the points on the last $\eta$ loops chosen sufficiently general so that the class of $D$ is vertex avoiding.  Then the restrictions of the functions $\psi_{ij}$ to $\Gamma_{[0,g-\eta]}$ are tropically independent, so the functions themselves are tropically independent as well.
\end{proof}

\bibliography{math}

\begin{thebibliography}{CLMTiB14}

\bibitem[AB15]{AminiBaker15}
O.~Amini and M.~Baker.
\newblock Linear series on metrized complexes of algebraic curves.
\newblock {\em Math. Ann.}, 362(1-2):55--106, 2015.

\bibitem[AC83]{ArbarelloCiliberto83}
E.~Arbarello and C.~Ciliberto.
\newblock Adjoint hypersurfaces to curves in {${\bf P}^{r}$} following {P}etri.
\newblock In {\em Commutative algebra ({T}rento, 1981)}, volume~84 of {\em
  Lecture Notes in Pure and Appl. Math.}, pages 1--21. Dekker, New York, 1983.

\bibitem[ACGH85]{ACGH}
E.~Arbarello, M.~Cornalba, P.~A. Griffiths, and J.~Harris.
\newblock {\em Geometry of algebraic curves. {V}ol. {I}}, volume 267 of {\em
  Grundlehren der Mathematischen Wissenschaften}.
\newblock Springer-Verlag, New York, 1985.

\bibitem[ACP12]{acp}
D.~Abramovich, L.~Caporaso, and S.~Payne.
\newblock The tropicalization of the moduli space of curves.
\newblock To appear in Ann. Sci. \'{E}c. Norm. Sup. arXiv:1212.0373, 2012.

\bibitem[Bak08]{Baker08}
M.~Baker.
\newblock Specialization of linear systems from curves to graphs.
\newblock {\em Algebra Number Theory}, 2(6):613--653, 2008.

\bibitem[Bal12]{Ballico12}
E.~Ballico.
\newblock Embeddings of general curves in projective spaces: the range of the
  quadrics.
\newblock {\em Lith. Math. J.}, 52(2):134--137, 2012.

\bibitem[BE87a]{BallicoEllia87b}
E.~Ballico and P.~Ellia.
\newblock Beyond the maximal rank conjecture for curves in {${\bf P}^3$}.
\newblock In {\em Space curves ({R}occa di {P}apa, 1985)}, volume 1266 of {\em
  Lecture Notes in Math.}, pages 1--23. Springer, Berlin, 1987.

\bibitem[BE87b]{BallicoEllia87}
E.~Ballico and P.~Ellia.
\newblock The maximal rank conjecture for nonspecial curves in {${\bf P}^n$}.
\newblock {\em Math. Z.}, 196(3):355--367, 1987.

\bibitem[BE91]{BayerEisenbud91}
D.~Bayer and D.~Eisenbud.
\newblock Graph curves.
\newblock {\em Adv. Math.}, 86(1):1--40, 1991.
\newblock With an appendix by Sung Won Park.

\bibitem[BN07]{BakerNorine07}
M.~Baker and S.~Norine.
\newblock {R}iemann-{R}och and {A}bel-{J}acobi theory on a finite graph.
\newblock {\em Adv. Math.}, 215(2):766--788, 2007.

\bibitem[BN09]{BakerNorine09}
M.~Baker and S.~Norine.
\newblock Harmonic morphisms and hyperelliptic graphs.
\newblock {\em Int. Math. Res. Not.}, 2009(15):2914--2955, 2009.

\bibitem[CDPR12]{tropicalBN}
F.~Cools, J.~Draisma, S.~Payne, and E.~Robeva.
\newblock A tropical proof of the {B}rill-{N}oether theorem.
\newblock {\em Adv. Math.}, 230(2):759--776, 2012.

\bibitem[CES25]{CES25}
G.~Castelnuovo, F.~Enriques, and F.~Severi.
\newblock Max {N}oether.
\newblock {\em Math. Ann.}, 93(1):161--181, 1925.

\bibitem[CJP15]{LiftingDivisors}
D.~Cartwright, D.~Jensen, and S.~Payne.
\newblock Lifting divisors on a generic chain of loops.
\newblock {\em Canad. Math. Bull.}, 58(2):250--262, 2015.

\bibitem[CLMTiB14]{CLMTiB14}
A.~Castorena, A.~L\'{o}pez~Mart\'{i}n, and M.~Teixidor~i Bigas.
\newblock Invariants of the {B}rill--{N}oether curve.
\newblock arXiv:1403.7489, 2014.

\bibitem[EH83]{EisenbudHarris83c}
D.~Eisenbud and J.~Harris.
\newblock A simpler proof of the {G}ieseker-{P}etri theorem on special
  divisors.
\newblock {\em Invent. Math.}, 74(2):269--280, 1983.

\bibitem[EH86]{EisenbudHarris86}
D.~Eisenbud and J.~Harris.
\newblock Limit linear series: {B}asic theory.
\newblock {\em Invent. Math.}, 85(2):337--371, 1986.

\bibitem[EH87a]{EisenbudHarris87}
D.~Eisenbud and J.~Harris.
\newblock Irreducibility and monodromy of some families of linear series.
\newblock {\em Ann. Sci. \'Ecole Norm. Sup. (4)}, 20(1):65--87, 1987.

\bibitem[EH87b]{EisenbudHarris87b}
D.~Eisenbud and J.~Harris.
\newblock The {K}odaira dimension of the moduli space of curves of genus {$\geq
  23$}.
\newblock {\em Invent. Math.}, 90(2):359--387, 1987.

\bibitem[EH87c]{EisenbudHarris87c}
D.~Eisenbud and J.~Harris.
\newblock The monodromy of {W}eierstrass points.
\newblock {\em Invent. Math.}, 90(2):333--341, 1987.

\bibitem[EH89]{EisenbudHarris89}
D.~Eisenbud and J.~Harris.
\newblock Irreducibility of some families of linear series with
  {B}rill-{N}oether number {$-1$}.
\newblock {\em Ann. Sci. \'Ecole Norm. Sup. (4)}, 22(1):33--53, 1989.

\bibitem[Far09]{Farkas09}
G.~Farkas.
\newblock Birational aspects of the geometry of {$\overline{\mathcal M}_g$}.
\newblock In {\em Surveys in differential geometry. {V}ol. {XIV}. {G}eometry of
  {R}iemann surfaces and their moduli spaces}, volume~14 of {\em Surv. Differ.
  Geom.}, pages 57--110. Int. Press, Somerville, MA, 2009.

\bibitem[FP05]{FarkasPopa05}
G.~Farkas and M.~Popa.
\newblock Effective divisors on {$\overline{\mathcal M}_g$}, curves on {$K3$}
  surfaces, and the slope conjecture.
\newblock {\em J. Algebraic Geom.}, 14(2):241--267, 2005.

\bibitem[GL86]{GreenLazarsfeld86}
M.~Green and R.~Lazarsfeld.
\newblock On the projective normality of complete linear series on an algebraic
  curve.
\newblock {\em Invent. Math.}, 83(1):73--90, 1986.

\bibitem[Har80]{Harris80}
J.~Harris.
\newblock The genus of space curves.
\newblock {\em Math. Ann.}, 249(3):191--204, 1980.

\bibitem[Har82]{Harris82}
J.~Harris.
\newblock {\em Curves in projective space}, volume~85 of {\em S\'eminaire de
  Math\'ematiques Sup\'erieures}.
\newblock Presses de l'Universit\'e de Montr\'eal, Montreal, Que., 1982.
\newblock With the collaboration of David Eisenbud.

\bibitem[HMY12]{HMY12}
C.~Haase, G.~Musiker, and J.~Yu.
\newblock Linear systems on tropical curves.
\newblock {\em Math. Z.}, 270(3-4):1111--1140, 2012.

\bibitem[JP14]{tropicalGP}
D.~Jensen and S.~Payne.
\newblock Tropical independence {I}: {S}hapes of divisors and a proof of the
  {G}ieseker-{P}etri theorem.
\newblock {\em Algebra Number Theory}, 8(9):2043--2066, 2014.

\bibitem[KRZB15]{KRZB15}
E.~Katz, J.~Rabinoff, and D.~Zureick-Brown.
\newblock Uniform bounds for the number of rational points on curves of small
  {M}ordell--{W}eil rank.
\newblock arXiv:1504.00694, 2015.

\bibitem[Lar12]{Larson12}
E.~Larson.
\newblock The maximal rank conjecture for sections of curves.
\newblock arXiv:1208.2730, 2012.

\bibitem[Laz86]{Lazarsfeld86}
R.~Lazarsfeld.
\newblock Brill-{N}oether-{P}etri without degenerations.
\newblock {\em J. Differential Geom.}, 23(3):299--307, 1986.

\bibitem[Noe82]{Noether82}
M.~Noether.
\newblock Zur {G}rundlegung der {T}heorie der algebraische raumcurven.
\newblock {\em J. Reine Angew. Math.}, 93:271--318, 1882.

\bibitem[Oss14a]{Osserman14b}
B.~Osserman.
\newblock Dimension counts for limit linear series on curves not of compact
  type.
\newblock arXiv:1410.5450v1, 2014.

\bibitem[Oss14b]{Osserman14}
B.~Osserman.
\newblock Limit linear series for curves not of compact type.
\newblock arXiv:1406.6699v3, 2014.

\bibitem[Rat87]{Rathmann87}
J.~Rathmann.
\newblock The uniform position principle for curves in characteristic {$p$}.
\newblock {\em Math. Ann.}, 276(4):565--579, 1987.

\bibitem[Sev15]{Severi15}
F.~Severi.
\newblock Sulla classificazione delle curve algebriche e sul teorema di
  esistenza di {R}iemann.
\newblock {\em Rend. R. Acc. Naz. Lincei}, 24(5):887--888, 1915.

\bibitem[TiB03]{Teixidor03}
M.~Teixidor~i Bigas.
\newblock Injectivity of the symmetric map for line bundles.
\newblock {\em Manuscripta Math.}, 112(4):511--517, 2003.

\bibitem[Voi92]{Voisin92}
C.~Voisin.
\newblock Sur l'application de {W}ahl des courbes satisfaisant la condition de
  {B}rill-{N}oether-{P}etri.
\newblock {\em Acta Math.}, 168(3-4):249--272, 1992.

\bibitem[Wan15]{Wang15}
J.~Wang.
\newblock Some results on the generic vanishing of {K}oszul cohomology via
  deformation theory.
\newblock {\em Pacific J. Math.}, 273(1):47--73, 2015.

\end{thebibliography}

\end{document}